\renewcommand*{\backref}[1]{}
\renewcommand*{\backrefalt}[4]{%
  \ifcase #1 %
    \relax
  \or
    $\uparrow$#2.%
  \else
    $\uparrow$#2.%
  \fi%
}
\definecolor{darkblue}{rgb}{0.0,0.0,1}
\theoremstyle{plain}
\newtheorem{theorem}{Theorem}
\newtheorem*{theorem*}{Theorem}
\newtheorem{lemma}[theorem]{Lemma}
\newtheorem*{lemma*}{Lemma}
\newtheorem{proposition}[theorem]{Proposition}
\newtheorem*{proposition*}{Proposition}
\newtheorem{corollary}[theorem]{Corollary}
\newtheorem*{corollary*}{Corollary}
\newtheorem{conjecture}[theorem]{Conjecture}
\newtheorem{question}[theorem]{Question}
\theoremstyle{definition}
\newtheorem{remark}[theorem]{Remark}
\newtheorem{definition}[theorem]{Definition}
\newtheorem{example}[theorem]{Example}
\DeclareMathOperator{\gon}{gon}
\renewcommand{\epsilon}{\varepsilon}
\renewcommand{\phi}{\varphi}
\renewcommand{\theta}{\vartheta}
\def\Alphabet{A,B,C,D,E,F,G,H,I,J,K,L,M,N,O,P,Q,R,S,T,U,V,W,X,Y,Z}
\def\alphabet{a,b,c,d,e,f,g,h,i,j,k,l,m,n,o,p,q,r,s,t,u,v,w,x,y,z}
\def\endpiece{xxx}
\def\makeAlphabet[#1]{\expandafter\makeA#1,xxx,}
\def\makealphabet[#1]{\expandafter\makea#1,xxx,}
\def\makeA#1,{\def\temp{#1}\ifx\temp\endpiece\else%
	\mkbb{#1}\mkfrak{#1}\mkbf{#1}\mkcal{#1}\mkscr{#1}\mkbs{#1}\expandafter\makeA\fi}%
\def\makea#1,{\def\temp{#1}\ifx\temp\endpiece\else\mkfrak{#1}\mkbf{#1}\mkbs{#1}\expandafter\makea\fi}%
\def\mkbb#1{\expandafter\def\csname bb#1\endcsname{\mathbb{#1}}}
\def\mkfrak#1{\expandafter\def\csname fr#1\endcsname{\mathfrak{#1}}}
\def\mkbf#1{\expandafter\def\csname b#1\endcsname{\mathbf{#1}}}
\def\mkcal#1{\expandafter\def\csname c#1\endcsname{\mathcal{#1}}}
\def\mkscr#1{\expandafter\def\csname s#1\endcsname{\mathscr{#1}}}
\def\mkbs#1{\expandafter\def\csname bs#1\endcsname{{\boldsymbol{#1}}}}
\def\makeop[#1]{\xmakeop#1,xxx,}
\def\mkop#1{\expandafter\def\csname #1\endcsname{{\mathrm{#1}}}} %
\def\xmakeop#1,{\def\temp{#1}\ifx\temp\endpiece\else\mkop{#1}\expandafter\xmakeop\fi}%
\def\makeup[#1]{\xmakeup#1,xxx,}
\def\mkup#1{\expandafter\def\csname #1\endcsname{{\mathrm{#1}\,}}} %
\def\xmakeup#1,{\def\temp{#1}\ifx\temp\endpiece\else\mkup{#1}\expandafter\xmakeup\fi}%
\DeclareMathOperator{\lcm}{lcm}
\newcommand{\Q}{\bQ}
\newcommand{\Z}{\bZ}
\newcommand{\PP}{\mathbf P}
\newcommand{\Jac}{\operatorname{Jac}}
\newcommand{\proj}{\operatorname{proj}}
\newcommand{\isom}{\cong}
\subjclass[2020]{11G18, 14Q05, 11G05, 14G35}
\let\@wraptoccontribs\wraptoccontribs
\title{Towards a Classification of Isolated $j$-invariants}
\author[Bourdon]{Abbey Bourdon}
\address{
  Abbey Bourdon,
  Wake Forest University,
  Department of Mathematics, 127 Manchester Hall, PO Box 7388, Winston-Salem, NC 27109
}
\email{\url{bourdoam@wfu.edu}}
\author[Hashimoto]{Sachi Hashimoto}
\address{  Sachi Hashimoto, Department of Mathematics,
  Brown University,
  Box 1917,
  151 Thayer Street,
  Providence, RI, 02912
}
\email{\url{sachi_hashimoto@brown.edu}}
\urladdr{\url{https://sachihashimoto.github.io/}}
\author[Keller]{Timo Keller}
\address{Timo Keller, Rijksuniveriteit Groningen, Bernoulli Institute, Bernoulliborg, Nijenborgh 9, 9747 AG Groningen, The Netherlands (previously: Leibniz Universität Hannover, Institut für Algebra, Zahlentheorie und Diskrete Mathematik, Welfengarten 1, 30167 Hannover, Germany)}
\email{t.keller@rug.nl}
\urladdr{\url{https://www.timo-keller.de}}
\author[Klagsbrun]{Zev Klagsbrun}
\address{
    Zev Klagsbrun,
    Center for Communications Research - La Jolla, 
    4320 Westerra Court,
    San Diego, CA, 92121}
\email{zdklags@ccr-lajolla.org}
\author[Lowry-Duda]{David Lowry-Duda}
\address{%
  David Lowry-Duda, ICERM, 121 South Main Street, Box E, 11th Floor,
  Providence, RI, 02903
}
\email{\url{david@lowryduda.com}}
\urladdr{\url{https://davidlowryduda.com}}
\author[Morrison]{Travis Morrison}
\address{Travis Morrison, 
Virginia Tech Department of Mathematics, 
226 Stanger Street, 
24061 Blacksburg, VA USA}
\email{\url{tmo@vt.edu}}
\urladdr{\url{https://travismo.github.io/}}
\author[Najman]{Filip Najman}
\address{
Filip Najman, University of Zagreb, Bijeni\v{c}ka Cesta 30, 10000 Zagreb, Croatia
}
\email{\url{fnajman@math.hr}}
\urladdr{\url{https://web.math.pmf.unizg.hr/~fnajman/}}
\author[Shukla]{Himanshu Shukla}
\address{Himanshu Shukla, Mathematisches Institut, Uiversit\"{a}t Bayreuth, Universit\"{a}tstrasse 30, 95444 Bayreuth, Germany}
\email{Himanshu.Shukla@uni-bayreuth.de}
\urladdr{\url{https://www.mathe2.uni-bayreuth.de/hishukla/}}
\begin{document}
\maketitle

\begin{abstract}
We develop an algorithm to test whether a non-CM elliptic curve $E/\Q$ gives rise to an isolated point of any degree on any modular curve of the form $X_1(N)$. This builds on prior work of Zywina which gives a method for computing the image of the adelic Galois representation associated to $E$. Running this algorithm on all elliptic curves presently in the $L$-functions and Modular Forms Database and the Stein--Watkins Database gives strong evidence for the conjecture that $E$ gives rise to an isolated point on $X_1(N)$ if and only if $j(E)=-140625/8, -9317,$ $351/4$, or $-162677523113838677$.
\end{abstract}

\section{Introduction}
\label{sec:intro}

The modular curve $X_1(N)$ is an algebraic curve over $\Q$ whose non-cuspidal points parametrize elliptic curves with a distinguished point of order $N$. We are interested in studying isolated points on $X_1(N)$, which are roughly those not belonging to an infinite family of points parametrized by a geometric object. For example, if $f\colon X_1(N) \rightarrow \PP^1$ is a rational map of degree $d$, then $f^{-1}(\PP^1(\Q))$ contains infinitely many closed points of degree $d$ by Hilbert's irreducibility theorem \cite[Chapter~9]{serre97}. We say a degree $d$ point \emph{not} arising from such a map is \textbf{$\PP^1$-isolated}. Other infinite families of degree $d$ points correspond to positive rank abelian subvarieties of the curve's Jacobian; see Section \ref{sec:background} for details. A point which is not thus parametrized is \textbf{AV-isolated}. If a closed point $x\in X_1(N)$ is both $\PP^1$- and AV-isolated, then we say $x$ is \textbf{isolated}. One special class of isolated points is \textbf{sporadic} points, which are points $x \in X_1(N)$ such that there are only finitely many points on $X_1(N)$ of degree at most $\deg(x)$. While every sporadic point is isolated \cite[Theorem 4.2]{BELOV}, the converse need not hold.

Elliptic curves with complex multiplication (CM) provide many natural examples
of isolated points, since the extra endomorphisms of a CM elliptic curve
constrain the size of the image of the associated Galois representation.
Indeed, as shown in \cite[Theorem~8.2]{CGPS2022}, there exist sporadic CM
points on $X_1(N)$ for all $N\geq 721$. Non-CM isolated points on $X_1(N)$ remain much more mysterious, and are tied to open uniformity problems of Balakrishnan and Mazur \cite[Conjecture 17]{BalakrishnanMazur23} and Serre \cite[$\S4.3$]{serre72}; see Section \ref{sec:UniformityConnections} for details. One recent line of investigation has focused on the class of isolated points associated to non-CM elliptic curves with $j$-invariant in $\Q$. Prior to this work, there were only three known examples of such elliptic curves, up to isomorphism over $\overline{\Q}$:
\begin{itemize}
    \item The elliptic curve with $j$-invariant $-140625/8$ corresponds to two sporadic points of degree 3 on $X_1(21)$. This example was first discovered by Najman \cite{najman16}. In fact, this is the unique elliptic curve giving a sporadic point of degree at most 3 on \emph{any} modular curve $X_1(N)$, as shown in \cite{DEvHMZB2021}.

    \item The elliptic curve with $j$-invariant $-9317$ gives three points of degree 6 on $X_1(37)$, as in work of van Hoeij \cite{vanHoeij}. Since 6 is less than half the $\Q$-gonality of $X_1(37)$, as computed in \cite{DerickxVanHoeij2014}, the points are necessarily sporadic by work of Frey \cite{frey}.

    \item The elliptic curve with $j$-invariant $351/4$ gives an isolated point of degree 9 on $X_1(28)$; see \cite[Theorem 2]{OddDeg}. There are infinitely many points on $X_1(28)$ of degree 9, as shown in \cite{DerickxVanHoeij2014}, so this point is isolated but not sporadic.
\end{itemize}

If $x \in X_1(N)$ is an isolated (resp. sporadic) point, we say $j(x) \in X_1(1) \cong \mathbf{P}^1$ is an \textbf{isolated} (resp. \textbf{sporadic}) \textbf{$j$-invariant}. Thus the three $j$-invariants listed above are isolated $j$-invariants, while only the first two, $-140625/8$ and $-9317$, are sporadic $j$-invariants. We have good reason to believe that the set of all isolated $j$-invariants in $\Q$ is finite. Indeed, in \cite[Corollary 1.7]{BELOV} the authors show that this would follow from an affirmative answer to Serre’s Uniformity Question \cite{serre72}, which is now a conjecture of Sutherland \cite{sutherland} and Zywina \cite{ZywinaImages}. 
 Moreover, in \cite[$\S1.2$]{BELOV}, the authors pose the following question:

\begin{question}[Bourdon, Ejder, Liu, Odumodu, Viray]
    Can one explicitly identify the (likely finite) set of isolated $j$-invariants in $\Q$?
\end{question}



This question serves as motivation for the present work. To this end, we develop an algorithm which can be used to determine whether a given non-CM $j$-invariant in $\Q$ is isolated. Starting with the image of the adelic Galois representation associated to $E/\Q$ with $j(E)=j$, as computed by Zywina \cite{ZywinaAlgorithm}, we apply various filters to determine whether there exists an isolated point $x \in X_1(N)$ with $j(x)=j$ for some $N$.

\begin{algorithm}[h!]
\renewcommand{\thealgocf}{} 
\caption{Main Algorithm} 
    \KwIn{A non-CM $j$-invariant $j \in \Q$.}
    \KwOut{A finite list $\{(a_1, d_1), \ldots, (a_k, d_k)\}$ of (level, degree) pairs such that $j$ is isolated (respectively, sporadic) if and only if there exists an isolated (respectively, sporadic) point $x \in X_1(a_i)$ of degree $d_i$ with $j(x) = j$ for some $(a_i, d_i)$ in the list.}
\end{algorithm}

\noindent
In particular, if the output of Algorithm \ref{alg:mainalgorithm} is the empty set, then $j$ is not an isolated $j$-invariant.
If the output is nonempty,
one can try to use other techniques to determine whether each point of degree $d_i$ on $X_1(a_i)$ associated to $E$ is isolated, from which we can definitively say whether $j(E)$ is an isolated $j$-invariant.

We ran Algorithm \ref{alg:mainalgorithm} on all elliptic curves currently in the $L$-functions and Modular Forms Database (LMFDB) \cite{LMFDB} and in the Stein--Watkins Database \cite{SteinWatkins}, which together contain over 36~million distinct non-CM $j$-invariants associated to elliptic curves over $\Q$ of conductor at most $10^8$. The output shows that all but 6 of the non-CM $j$-invariants included in these databases are \emph{not} isolated. As noted above, half of these remaining $j$-invariants are known to be isolated; in Section~\ref{sec:remainingfilters} we perform a case-by-case analysis on the remaining $3$ candidates. In particular, these findings imply the following result.

\begin{theorem}\label{LMFDBoutputThm}
Let $x=[E,P]\in X_1(N)$ be a non-CM isolated point with $j(E) \in \Q$.
Fix an equation for $E/\Q$ and let $N_E$ denote its conductor.
Suppose that one of the following holds:
\begin{itemize}
    \item $N_E \leq \numprint{500000}$,
    \item $N_E$ is only divisible by primes $p \leq 7$, or
    \item $N_E=p \leq \numprint{300000000}$ for some prime number $p$.
\end{itemize}
Then $j(E) \in \{-140625/8,-9317,351/4, -162677523113838677\}$. Moreover, each one of these $j$-invariants corresponds to a $\PP^1$-isolated point on $X_1(21)$, $X_1(37)$, $X_1(28)$, or $X_1(37)$, respectively.
\end{theorem}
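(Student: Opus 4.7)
The strategy is to reduce Theorem~\ref{LMFDBoutputThm} to a finite computation followed by a small case-by-case analysis. By the specification of Algorithm~\ref{alg:mainalgorithm}, for any non-CM $j \in \Q$ the algorithm returns a finite list of pairs $(a_i, d_i)$ such that $j$ is an isolated $j$-invariant if and only if some pair in the list is realized by an isolated point $x \in X_1(a_i)$ with $j(x)=j$. In particular, if the output list is empty then $j$ is not isolated. It therefore suffices to run the algorithm on a set of non-CM elliptic curves whose $j$-invariants exhaust the three hypotheses of the theorem, and then to handle individually the finitely many $j$-invariants for which the output is nonempty.

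The first step is to check that the LMFDB together with the Stein--Watkins database contains at least one representative of every $\ol{\Q}$-isomorphism class of non-CM elliptic curve $E/\Q$ satisfying one of the three listed conductor conditions. For $N_E \leq \numprint{500000}$ this is the completeness range of the LMFDB; for $N_E$ supported on primes $p \leq 7$ the relevant curves have been completely enumerated; and for prime conductor $N_E \leq \numprint{300000000}$ this is the coverage provided by Stein--Watkins. Since isolation depends only on $j(E)$, it is enough to keep one $j$-invariant per $\ol{\Q}$-isomorphism class.

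The second step is to execute Algorithm~\ref{alg:mainalgorithm} on each of these non-CM $j$-invariants. By design, all but six values produce an empty output list and are therefore not isolated. Three of the remaining six are the previously known isolated $j$-invariants $-140625/8$, $-9317$, and $351/4$, whose $\PP^1$-isolated points on $X_1(21)$, $X_1(37)$, and $X_1(28)$ respectively are recorded in the references cited in Section~\ref{sec:intro} (the first two are in fact sporadic, and the third is addressed by \cite{OddDeg}). The other three candidates must then be analyzed individually in Section~\ref{sec:remainingfilters}: exactly one of them, $j = -162677523113838677$, is shown to give a $\PP^1$-isolated point on $X_1(37)$, while the other two are shown to give no isolated point on any $X_1(N)$.

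The main obstacle is precisely this case-by-case step. Algorithm~\ref{alg:mainalgorithm} only narrows each surviving $j$-invariant to a finite list of candidate pairs $(a_i,d_i)$, and certifying isolation or non-isolation at each such candidate requires input beyond the algorithm itself: explicit equations for $X_1(a_i)$, known gonality bounds, analyses of positive-rank abelian subvarieties of $\Jac(X_1(a_i))$ together with their degree-$d_i$ parametrizations, and direct computation with the mod-$a_i$ Galois representation attached to $E$. The computational scale of the second step is large but routine given the filters built into Algorithm~\ref{alg:mainalgorithm}; the genuine mathematical work is the verification in Section~\ref{sec:remainingfilters}, which completes the proof.
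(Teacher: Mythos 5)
Your proposal is correct and follows essentially the same route as the paper: exhaust the three conductor ranges via the LMFDB and Stein--Watkins databases, run Algorithm~\ref{alg:mainalgorithm} to reduce to the six $j$-invariants of Table~\ref{table:imagegt0}, cite the literature for the three previously known isolated $j$-invariants, and settle the remaining three by the explicit computations of Section~\ref{sec:remainingfilters} (Riemann--Roch computations showing $\ell(D)=2$ for the degree-$4$ points on $X_1(17)$ and $X_1(24)$, and $\ell(\overline{D})=1$ after reduction mod $3$ for the degree-$18$ point on $X_1(37)$). No substantive difference from the paper's argument.
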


\begin{remark}
Though we did not find a result in the literature showing the degree 18 point on $X_1(37)$ associated to $j=-162677523113838677$ is $\PP^1$-isolated, the point itself was well-known prior to this work. Indeed, this $j$-invariant corresponds to one of two non-CM elliptic curves over $\Q$ with a rational cyclic 37-isogeny (see, e.g., \cite[Table 4]{LRAnn}), and any elliptic curve with a $\Q$-rational cyclic $N$-isogeny will give a point on $X_1(N)$ in degree at most $\varphi(N)/2$. Theorem \ref{DisolatedThm} in the appendix by Maarten Derickx and Mark van Hoeij shows that this point is AV-isolated as well (answering a question of Ejder \cite[Remark 1.3]{Ejder}). This allows us to conclude that the 4 $j$-invariants identified in Theorem \ref{LMFDBoutputThm} are in fact \emph{isolated}.
\end{remark}

We conjecture that the four non-CM $j$-invariants identified above are the \emph{only} $j$-invariants in $\Q$ associated to non-CM elliptic curves which give rise to isolated points on $X_1(N)$. Such a result has already been established for points of odd degree: by \cite{OddDeg} we know that $j=-140625/8$ and $j=351/4$ are the only non-CM $j$-invariants in $\Q$ giving an isolated point of odd degree on $X_1(N)$, even as $N$ ranges over all positive integers. Moreover, \cite{Ejder} shows that if $x \in X_1(\ell^n)$ is a non-CM isolated point with $\ell>7$ prime and $j(x)\in \Q$, then $j=-9317$ or $j=-162677523113838677$.

\begin{conjecture}\label{conj}
If $x\in X_1(N)$ is an isolated point with $j(x) \in \Q$, then $j(x)=-140625/8$, $-9317$, $351/4$, $-162677523113838677$, or one of the 13 CM $j$-invariants in $\Q$.
\end{conjecture}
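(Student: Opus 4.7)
The plan is to reduce the conjecture to a finite, algorithmically verifiable check that extends Theorem \ref{LMFDBoutputThm}. The CM $j$-invariants appear in the conclusion, so it suffices to handle the non-CM case. Two existing results already bound the problem considerably: by \cite{Ejder}, every non-CM isolated point on $X_1(\ell^n)$ with $j \in \Q$ and prime $\ell > 7$ has $j \in \{-9317,\,-162677523113838677\}$; by \cite{OddDeg}, every non-CM isolated point on $X_1(N)$ of odd degree has $j \in \{-140625/8,\,351/4\}$. Thus one may restrict attention to levels $N$ whose prime divisors lie in $\{2,3,5,7\}$ and to points of even degree.

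The next step is to produce a provably complete, finite list of candidate $j$-invariants at such levels. The cleanest route is via an effective form of Serre's uniformity conjecture at the primes $2,3,5,7$: this would bound the height of any $j(E) \in \Q$ for which the adelic image $\rho_{E,\infty}(G_\Q)$ is small enough to admit an isolated point. Equivalently, one classifies all subgroups $H \subseteq \GL_2(\wh{\Z})$ of appropriate level whose associated modular curve $X_H$ has non-CM $\Q$-rational points, and enumerates those points. Once this list is in hand, one applies Algorithm \ref{alg:mainalgorithm} to each candidate: by design of the algorithm, combined with the filter of \cite{OddDeg} and \cite{Ejder} above, the output must match only the four known non-CM $j$-invariants.

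The main obstacle lies in the second step: the joint adelic image at $\{2,3,5,7\}$, including possible entanglements between these primes, is not yet classified with the uniformity required. Results of Mazur, Bilu--Parent, and Bilu--Parent--Rebolledo handle many cases at individual primes or for particular isogeny structures, but converting them into an effective height bound on candidate $j$-invariants \emph{tight enough} for our algorithm to terminate on every surviving candidate is where the substantively new work must happen. Once such a bound is in place, Algorithm \ref{alg:mainalgorithm} turns the conjecture into a finite computation of exactly the flavor already carried out in Theorem \ref{LMFDBoutputThm}.
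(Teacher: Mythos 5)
The statement you are asked to prove is \emph{Conjecture}~\ref{conj}: the paper does not prove it, and offers only computational evidence (running Algorithm~\ref{alg:mainalgorithm} on the LMFDB and Stein--Watkins databases, yielding Theorem~\ref{LMFDBoutputThm}, plus the case analysis of Section~\ref{sec:remainingfilters} and the appendix). Your proposal likewise does not constitute a proof, and you say so yourself: the entire argument is conditional on an effective classification of adelic images (equivalently, an effective form of Serre's uniformity question at the relevant primes, with entanglements) that is not available. That missing ingredient is precisely why the paper leaves the statement as a conjecture --- indeed \cite[Corollary 1.7]{BELOV} already records that an affirmative answer to Serre's question would give \emph{finiteness} of the set of isolated rational $j$-invariants, but even that would not by itself produce the explicit finite list of candidates your step two requires. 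So the ``substantively new work'' you defer is the whole problem, not a technical detail.

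There are also two concrete flaws in the reduction you do spell out. First, the restriction to levels $N$ divisible only by primes in $\{2,3,5,7\}$ does not follow from \cite{Ejder}: that result treats only prime-power levels $\ell^n$ with $\ell>7$, whereas an isolated point could a priori live on a composite level such as $2\cdot 37$ or $3\cdot 13$; the known examples at level $37$ show that large non-surjective primes genuinely occur, and the paper's own machinery (primitive points via \cite[Theorem~5.1]{BELOV}) only lowers the level to a divisor of the $m$-adic level where $m$ includes \emph{all} non-surjective primes, not just those $\le 7$. Second, even granting a finite candidate list, running Algorithm~\ref{alg:mainalgorithm} does not automatically ``match only the four known $j$-invariants'': a nonempty output is inconclusive by design, and the paper must supplement it with ad hoc Riemann--Roch computations on explicit models (Section~\ref{sec:remainingfilters}) and the formal-immersion argument of the appendix (Theorem~\ref{DisolatedThm}) to settle individual candidates. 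Your sketch would need an analogous, and in general unbounded, amount of case-by-case work for whatever candidates survive.
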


\noindent Since any CM elliptic curve is known to produce sporadic points on infinitely many modular curves of the form $X_1(N)$ by \cite[Theorem 7.1]{BELOV}, it follows conversely that every $j$-invariant in this set is $\PP^1$-isolated (and in fact isolated --- see the appendix).

\begin{remark}
Let $x\in X_1(N)$ be an isolated point with $j(x) \in \Q$. One expects that the square-free part of the conductor of any $E$ with $j(E) = j(x)$ will be very small. The reason for this is that any such isolated point will generally have small mod $\ell$ image for some $\ell$ dividing $N$. Unless $\ell$ is small, we expect this to force potentially good reduction on $E$ at all odd primes; see~\cite[Corollary 4.4]{Mazur1978} and \cite[Theorem 5.1]{BiluParent11} for examples of this phenomenon. Indeed this happens for all the curves appearing in~\Cref{conj}: their conductors are either a square, or twice a square.

Heuristically one might except to find most or all of the isolated points among elliptic curves with relatively small conductor (and hence in the LMFDB). One can also numerically observe that elliptic curves with small Galois representations are over-represented among curves with small conductor. Only 4 of the first 50 curves ordered by conductor have trivial torsion groups and all of them have non-trivial isogenies, while one expects asymptotically almost all curves to have surjective Galois representations when ordered by height; see \cite[Theorem 1]{Duke97}.

These observations bolster the computational evidence supporting \Cref{conj}.
\end{remark}

It is natural to suspect a connection between isolated points on $X_1(N)$ and isolated points on $X_0(N)$, the modular curve parametrizing elliptic curves with a rational cyclic $N$-isogeny. We say a point $x \in X_0(N)(\Q)$ is \textbf{exceptional} if $X_0(N)(\Q)$ is finite and $x$ corresponds to a non-CM elliptic curve over $\Q$. It is worth noting that the sporadic points on $X_1(21)$ associated to $j=-140625/8$ and the sporadic points on $X_1(37)$ associated to $j=-9317$ lie above exceptional rational points on $X_0(21)$ and $X_0(37)$, respectively. One might wonder whether other sporadic $j$-invariants can be obtained by a similar construction. Running Algorithm~\ref{alg:mainalgorithm} on all 14 $j$-invariants corresponding to exceptional rational points on $X_0(N)$ for any $N$ (described, for example, in~\cite[Table 4]{LRAnn}) shows that there are no additional sporadic $j$-invariants. In particular, our algorithm shows that $j= -162677523113838677$ is not a sporadic $j$-invariant; see Example \ref{Ex45} for details.

\begin{theorem}
Let $E$ be an elliptic curve corresponding to an exceptional rational point on $X_0(N)$ for some positive integer $N$. If $j(E)$ is sporadic, then $j(E)=-140625/8$ or $-9317$.
\end{theorem}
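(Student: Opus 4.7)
The proof reduces to running Algorithm~\ref{alg:mainalgorithm} on a short, completely understood finite list of inputs and then handling its outputs. The plan is to enumerate the 14 non-CM $j$-invariants associated to exceptional rational points on $X_0(N)$, as tabulated in~\cite[Table 4]{LRAnn}. For each such $j$, Algorithm~\ref{alg:mainalgorithm} returns a finite list of (level, degree) pairs $(a_i,d_i)$ with the property that $j$ is sporadic if and only if there exists a sporadic point of degree $d_i$ on $X_1(a_i)$ mapping to $j$ under the natural projection to $X_1(1)$. The sporadicity question is thereby reduced to a finite case analysis dictated by the algorithm's output.

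First, for each of the 14 $j$-invariants one computes the adelic Galois image via Zywina's algorithm and feeds it into Algorithm~\ref{alg:mainalgorithm} in its sporadic-detection mode. For most of the 14 inputs, one expects the algorithm to return the empty set, which immediately certifies that the corresponding $j$-invariant is not sporadic. For the two classically known examples, $j=-140625/8$ and $j=-9317$, the algorithm should return pairs consistent with the sporadic points on $X_1(21)$ in degree $3$ and on $X_1(37)$ in degree $6$; the work of Najman and of van Hoeij cited in the introduction then confirms these pairs yield genuine sporadic points.

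The subtle case---and the principal obstacle---is $j=-162677523113838677$, which corresponds to an isolated but not sporadic point of degree $18$ on $X_1(37)$. Here the algorithm may well return the candidate pair $(37,18)$, and sporadicity must be ruled out directly. This is the content of Example~\ref{Ex45}: since the $\Q$-gonality of $X_1(37)$ is $18$ by \cite{DerickxVanHoeij2014}, a degree-$18$ map $X_1(37)\to\PP^1$ already produces infinitely many degree-$18$ points on $X_1(37)$ via Hilbert irreducibility, so no point of degree $18$ on $X_1(37)$ can be sporadic. Combining the empty outputs for the remaining eleven $j$-invariants, the classical verifications for the two surviving examples, and this gonality argument for $j=-162677523113838677$ yields the theorem.
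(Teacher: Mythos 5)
Your overall strategy is exactly the paper's: the theorem is proved by running Algorithm~\ref{alg:mainalgorithm} on the 14 exceptional $j$-invariants from \cite[Table 4]{LRAnn} and analyzing the nonempty outputs, with the decisive case being $j=-162677523113838677$, where the output $(37,18)$ is ruled out as sporadic because $X_1(37)$ has infinitely many degree $18$ points (gonality $18$ by \cite{DerickxVanHoeij2014} plus Hilbert irreducibility) --- this is precisely Example~\ref{Ex45}.

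There is, however, one concrete error in your case analysis: it is not true that the remaining eleven $j$-invariants all give empty output. A fourth exceptional $j$-invariant, $j=-882216989/131072$ (coming from a rational point on $X_0(17)$), produces the nonempty output $\{(17,4)^2\}$; see Table~\ref{table:imagegt0}. Your proof as written ("combining the empty outputs for the remaining eleven $j$-invariants") therefore skips a case that must be handled before the theorem follows. The repair is of the same kind as your $(37,18)$ argument: $X_1(17)$ has $\Q$-gonality $4$, so it has infinitely many closed points of degree $4$, and hence no degree $4$ point on $X_1(17)$ is sporadic (the paper in fact proves the stronger statement that these quartic points are not even $\PP^1$-isolated, using a Riemann--Roch computation and \cite[Proposition 6.7]{DerickxMazurKamienny}, but for sporadicity the gonality bound suffices). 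With that case added, your argument matches the paper's. A smaller caveat: since the algorithm's outputs are computational facts, statements like "one expects the algorithm to return the empty set" should be replaced by the actual recorded outputs; the truth of the theorem rests on those computations, not on expectations.
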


It is still an open problem to determine all sporadic points $x \in \cup_{N \in \Z^+} X_1(N)$ with $j(x) =-140625/8$ or $-9317$.

\subsection{Key Components of Algorithm.} The first step of our algorithm applies results of \cite{BELOV} and \cite{ZywinaAlgorithm} to compute the finite set of \textbf{primitive points} associated to a non-CM elliptic curve $E/\Q$. The primitive points are characterized by the following theorem. 
\begin{theorem}\label{Thm:PrimPtsIntro}
Let $E/\Q$ be a non-CM elliptic curve.  There exists a finite set $\mathcal{P}=\mathcal{P}(E)$ of primitive points in $\cup_{n \in \Z^+} X_1(n)$ associated to $E$ which are characterized by the following properties:
\begin{enumerate}
\item For each $N\in \Z^+$, a point $x \in X_1(N)$ with $j(x)=j(E)$ corresponds to a unique element $x' \in \mathcal{P}$ under the natural projection map. Moreover, if $x' \in X_1(a)$, then $a \mid N$ and $\deg(x)=\deg(f)\cdot \deg(x')$, where $f\colon X_1(N) \rightarrow X_1(a)$ is the natural map.
\item The rational number $j(E)$ is isolated (respectively, sporadic) if and only if there exists an isolated (respectively, sporadic) point in $\mathcal{P}$.
\end{enumerate}
Moreover, the set $\mathcal{P}$ is minimal with respect to condition (i).
\end{theorem}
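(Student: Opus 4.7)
The plan is to build $\mathcal{P}$ from the image of the adelic Galois representation $\rho_E\colon G_\Q \to \GL_2(\wh{\Z})$, which by Serre's open image theorem is open and hence has a computable adelic level $M$ (the output of Zywina's algorithm). Via the standard dictionary, for non-CM $E/\Q$ and $N \geq 4$, non-cuspidal closed points of $X_1(N)$ above $j(E)$ biject with $\{\pm G_N\}$-orbits on primitive vectors in $(\Z/N\Z)^2$, where $G_N := \rho_{E,N}(G_\Q)$, with the orbit size equal to $\deg(x)$; the natural projection $X_1(N) \to X_1(a)$ for $a \mid N$ corresponds to coordinatewise reduction of vectors.

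I would call $x \in X_1(a)$ above $j(E)$ \emph{primitive} if for every proper divisor $b \mid a$, writing $g\colon X_1(a) \to X_1(b)$ and $x'' := g(x)$, one has $\deg(x) < \deg(g)\deg(x'')$. Given any $x \in X_1(N)$ above $j(E)$, iteratively project to the minimal divisor of the current level for which the degree identity $\deg(x) = \deg(f)\deg(x')$ holds --- the set of such divisors is closed under $\gcd$, so a unique minimum exists --- and the iteration terminates at a primitive point $x^\flat$. Define $\mathcal{P}$ to be the image of the resulting map $x \mapsto x^\flat$; condition (i) then holds by construction, and minimality is automatic since each $x \in \mathcal{P}$ projects to itself.

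The crux is finiteness of $\mathcal{P}$. For each prime $\ell$, set $m_\ell := v_\ell(M)$. Openness forces $\ker(\GL_2(\Z/\ell^n\Z) \to \GL_2(\Z/\ell^{m_\ell}\Z)) \subseteq G_{\ell^n}$ for all $n \geq m_\ell$, so this kernel acts simply transitively on lifts of any primitive vector mod $\ell^{m_\ell}$, contributing an orbit-size factor of $\ell^{2(n-m_\ell)}$; this exactly matches the degree of the $\ell$-part of $X_1(\ell^n) \to X_1(\ell^{m_\ell})$. Combining across primes via the Chinese Remainder Theorem, every $x$ above $j(E)$ projects degree-preservingly into $\bigcup_{a \mid M} X_1(a)$, which is finite.

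Condition (ii) then follows from the general isolated/sporadic transfer results of \cite{BELOV}: a projection satisfying the degree identity preserves and reflects both being isolated and being sporadic, since it identifies the underlying parametrizing families in the Jacobian and $\PP^1$. The main obstacle is the finiteness step --- in particular, handling the $\{\pm 1\}$ action uniformly alongside the cross-prime orbit analysis, and matching the claimed orbit-size growth $\ell^{2(n-m_\ell)}$ to the actual degree of the natural projection map between the modular curves. Once that is in hand, the remainder of the argument is essentially bookkeeping and invocation of the already-cited results.
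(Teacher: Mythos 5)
Your overall architecture (declare a point primitive when the degree identity $\deg(x)=\deg(f)\cdot\deg(f(x))$ fails for every proper divisor of the level, push each point down to a primitive one, get finiteness from the level of the Galois image, and transfer isolatedness via \cite{BELOV}) is the same as the paper's, but there is a genuine gap at the crux of part (i): you assert that ``the set of such divisors is closed under $\gcd$, so a unique minimum exists'' with no argument. This closure property is exactly the uniqueness statement that is the new content of the theorem, and it is not formal bookkeeping. The paper proves it as Proposition~\ref{Prop:gcd} (leading to the single-sink statement, Corollary~\ref{Cor:SingleSink}), and the proof requires genuine arithmetic input: one must compare the residue field $\Q(x)$ with the compositum $\Q(x_1)\Q(x_2)$ of the residue fields of the two projections (Lemmas~\ref{lem:LCM} and~\ref{lem:compositum_res_fields}), where the compositum can be an index-$2$ subfield of $\Q(x)$ when $\gcd(n_1,n_2)\leq 2$, and then play the resulting field-degree inequalities off against the explicit degree formula of Corollary~\ref{Cor:DegreeFormula}, whose own factor of $\tfrac12$ in small levels is what saves the borderline case. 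Without this, your map $x\mapsto x^\flat$ need not be well defined (two incomparable divisors could each satisfy the degree identity while their gcd does not, a priori), so the uniqueness in (i) does not ``hold by construction.''

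Two smaller points. Your finiteness argument is essentially a re-derivation of \cite[Theorem~5.1]{BELOV}, which the paper simply cites (with $2,3\in S_E$ precisely to absorb the $\pm I$ and small-level factors you flag); the kernel of reduction acts transitively, not simply transitively, on the relevant fibers of vectors, and the orbit-size-to-degree dictionary carries a factor of $2$ for levels $>2$ (compare Proposition~\ref{prop:compute_degrees}), so these loose ends are real but could be closed by citation rather than new work. For (ii), the claim that the degree identity ``preserves and reflects'' isolatedness overstates what is needed and what is true: the required direction is only the pushforward statement of \cite[Theorem~4.3]{BELOV} (an isolated $x$ forces its primitive image to be isolated); the converse direction of (ii) is immediate from the definition, since primitive points lie above $j(E)$.
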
 

\noindent By Theorem~\ref{Thm:PrimPtsIntro}\,(i), one can think of $\mathcal{P}(E)$ as the minimal set needed to reproduce the degrees of all points $x \in \cup_{n \in \Z^+} X_1(n)$ with $j(x) = j(E)$. Moreover, any isolated point $x \in X_1(n)$ with $j(x)=j(E)$ will correspond to a unique primitive point of minimal level. See Section \ref{sec:PrimitivePoints} for details.

The second part of Algorithm \ref{alg:mainalgorithm} works to show the primitive points corresponding to $E$ are \emph{not} isolated. For example, if the Riemann--Roch space associated to $x \in X_1(N)$ has dimension at least 2, then $x$ is not $\PP^1$-isolated (and therefore not isolated). In other cases, we can show $x \in X_1(N)$ is not isolated by applying the following result.

\begin{theorem}\label{Prop:Genus0Intro}
Let $E/\Q$ be an elliptic curve, and let $H \leq \GL_2(\Z/N\Z)$ be the image of the mod $N$ Galois representation of $E$, after some choice of basis. If the modular curve $X_H$ has genus $0$, then there are no isolated points on $X_1(N)$ associated to $E$.
\end{theorem}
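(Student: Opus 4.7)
The plan. Since $E$ has mod-$N$ Galois image $H$, it determines a $\Q$-rational point $z_E \in X_H(\Q)$. Any smooth projective genus-zero curve over $\Q$ with a rational point is isomorphic to $\mathbb{P}^1_\Q$, so $X_H \cong \mathbb{P}^1_\Q$. Fix any $x \in X_1(N)$ with $j(x)=j(E)$, and write $d = \deg(x)$. Since ``isolated'' implies ``$\mathbb{P}^1$-isolated'', which is ruled out by a positive-dimensional linear system $|x|$, it suffices to produce a nonconstant morphism $\Phi\colon X_H \to X_1(N)^{(d)}$ with $\Phi(z_E)=[x]$. For then the composition with the Abel--Jacobi map $X_1(N)^{(d)} \to J_1(N)$ is constant (any morphism from $\mathbb{P}^1$ to an abelian variety is), so $\Phi$ factors through the fiber of that map containing $[x]$, namely the linear system $|x|$, forcing $\dim|x|\geq 1$.

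Construction of $\Phi$. Moduli-theoretically, the points of $X_1(N)$ above $j(E)$ are in bijection with the orbits of $H\langle -I\rangle$ on primitive elements of $(\Z/N\Z)^2/\{\pm 1\}$; under this bijection $x$ corresponds to an orbit $O$ of size $d$. Choose a representative $v \in (\Z/N\Z)^2$ of $O$ and set $H_x := \mathrm{Stab}_{H\langle -I\rangle}(\{v,-v\})$. Then $[H\langle -I\rangle : H_x] = d$, and, after changing basis so that $v = e_1$, we also have $H_x \leq H_1$, the subgroup of $\GL_2(\Z/N\Z)$ cutting out $X_1(N)$. These inclusions yield a correspondence
\[
X_1(N) \xleftarrow{\pi_1} X_{H_x} \xrightarrow{\pi_2} X_H,
\]
with $\pi_2$ of degree $d$, and I define $\Phi(z) := (\pi_1)_{*}\bigl(\pi_2^{-1}(z)\bigr) \in X_1(N)^{(d)}$.

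Verification and main obstacle. The geometric fiber $\pi_2^{-1}(z_E)$ is identified with $H/H_x$, a set of size $d$; since Galois acts on $E[N]$ through the full image $H$, the action on $H/H_x$ is transitive with stabilizer $H_x$, so $\pi_2^{-1}(z_E)$ is a single closed point of $X_{H_x}$ of degree $d$. Its image under $\pi_1$ is the point of $X_1(N)$ associated to $\pm v$, namely $x$, with an isomorphism of residue fields; hence $\Phi(z_E)=[x]$. Nonconstancy follows by composing $\Phi$ with the projection $X_1(N)^{(d)} \to X(1)^{(d)}$ induced by the $j$-map: the result is $z \mapsto d\cdot[j(z)]$, which is nonconstant because $X_H \to X(1)$ is a finite morphism. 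The main technical step is this moduli-theoretic description of the correspondence and of the fiber of $\pi_2$ above $z_E$, including the bookkeeping required when $-I \notin H$ (where $H$ and $H\langle -I\rangle$ must be carefully distinguished). Applying the argument to each $x \in X_1(N)$ above $j(E)$ shows that $E$ admits no isolated points on $X_1(N)$.
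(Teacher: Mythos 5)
Your proposal is correct in substance, and its engine is the same as the paper's: from the genus~$0$ curve $X_H$ one builds, through the intermediate modular curve attached to the stabilizer of $\{\pm P\}$ (your $X_{H_x}$, which is the paper's curve $X'_B$ in disguise), a degree-$d$ covering whose fibers give a nonconstant map from a genus~$0$ curve into $X_1(N)^{(d)}$ passing through $x$; since any map from $\PP^1$ to $\Jac(X_1(N))$ is constant, $x$ is $\PP^1$-parametrized, hence not isolated. Where you genuinely diverge is in how $\Q$-rationality is obtained. The paper passes to $K$, the fixed field of $\det\bigl(B_1(N)\cap G(N)\bigr)$, works with the geometrically integral curve $X'_B/K$ (via its lemma producing $f^*\colon Y\to X^{(d)}$ and the lemma on images of $\PP^1$-parametrized points), and then returns to $\Q$ with Derickx's Descent Lemma, using $K\subseteq\Q(x)$. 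You instead stay over $\Q$ throughout, treating $X_{H_x}$ as a $\Q$-scheme (it is $\Q$-integral but geometrically disconnected precisely when $\det H_x\neq(\Z/N\Z)^{\times}$, i.e.\ when the paper's $K\neq\Q$), defining $\Phi(z)=(\pi_1)_*\pi_2^{-1}(z)$, and checking $\Phi(z_E)=[x]$ by an explicit coset/Galois-orbit computation; your nonconstancy argument by pushing forward to $X(1)^{(d)}$, giving $z\mapsto d\cdot[j(z)]$, is also cleaner than the paper's. This buys a more direct statement with no base change and no descent lemma, at the cost of two points you should make explicit: (i) the ``divisor of fibers'' construction must be justified for the possibly geometrically disconnected curve $X_{H_x}$ (the paper's lemma is stated for geometrically integral curves); this can be patched, e.g., by defining $\Phi$ on the unramified locus and extending from the smooth curve $X_H$ to the proper target, or by working with the $K$-form and taking the sum over embeddings exactly as in the Descent Lemma; and (ii) the scheme-theoretic fiber $\pi_2^{-1}(z_E)$ must be reduced for $\Phi(z_E)$ to equal $x$ on the nose, which holds because $z_E$ is neither a cusp nor an elliptic point --- like the paper, you are implicitly assuming $j(E)\neq 0,1728$ (these are CM and irrelevant for the application). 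Two minor slips: the Galois stabilizer of the identity coset is $H\cap H_x$ rather than $H_x$ (the degree count is unchanged since $H_xH=H\langle -I\rangle$), and the relevant vectors are those of order exactly $N$; neither affects the argument.
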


\noindent In particular, any elliptic curve with adelic image of genus 0 does not give rise to any isolated points on $X_1(N)$, even as $N$ ranges over all positive integers. However, Theorem \ref{Prop:Genus0Intro} is more broadly applicable. Even when the adelic image of $E$ has positive genus, it can still be that for all \emph{primitive} points $x \in X_1(a)$ associated to $E$, the mod $a$ Galois representation of $E$ gives a genus 0 modular curve. This occurs more often than one might expect: our preliminary computations identified at least 89 distinct such non-CM elliptic curves (up to $\overline{\Q}$-isomorphism) just within those currently in the LMFDB. See Example \ref{eg:modngenus0} for one such curve.

\subsection{Connection to Open Uniformity Problems}\label{sec:UniformityConnections} Several well-known uniformity problems can be tied to isolated points on $X_1(N)$. One of the most longstanding examples is Serre's Uniformity Problem \cite[\S\,4.3]{serre72}, which in modern formulations \cite{sutherland,ZywinaImages} asks whether the mod $\ell$ Galois representation for any non-CM elliptic curve over $\Q$ is surjective for all $\ell>37$. In the proof of \cite[Theorem~1.3]{BourdonNajman2021}, the authors show that a non-CM elliptic curve $E/\Q$ with non-surjective mod $\ell$ Galois representation can be used to construct sporadic points on $X_1(\ell^2)$ for all $\ell$ sufficiently large. This approach allows one to phrase Serre's Uniformity Problem to be about controlling isolated points on $X_1(\ell^2)$ within certain families of non-CM $\Q$-curves.

A more recent example is \cite[Conjecture 17]{BalakrishnanMazur23}, where Balakrishnan and Mazur conjecture that for sufficiently large $N$, any elliptic curve giving a quadratic point on $X_0(N)$ must have complex multiplication. Since a quadratic point on $X_0(N)$ will give a sporadic point on $X_1(N)$ for $N$ sufficiently large, we can connect this conjecture to one about non-CM isolated points $x \in X_1(N)$ with $j(x)$ generating at most a quadratic extension.

\subsection{Outline}
After providing relevant background material in Section \ref{sec:background}, we give an overview of the main algorithm in Section \ref{sec:mainalg}. The sub-algorithms used to compute primitive points and related mathematical results are discussed in Sections \ref{sec:madicrep}--\ref{sec:primitivedeg}, with the proof of Theorem \ref{Thm:PrimPtsIntro} appearing in Section \ref{sec:PrimitivePoints}. Results on genus 0 adelic images, including the proof of Theorem \ref{Prop:Genus0Intro}, are in Section \ref{sec:genus0}.  We address the validity of Algorithm \ref{alg:mainalgorithm} in Section \ref{sec:validity}. The output the main algorithm obtained after running it on elliptic curves in the LMFDB and the Stein--Watkins database is discussed in Section \ref{sec:remainingfilters}, along with its final analysis.

\subsection{Code} We have implemented our algorithm in Magma \cite{Magma}. Code is available in the GitHub repository at \url{https://github.com/davidlowryduda/isolated_points}.

\section*{Acknowledgments}
We thank Pete Clark, Maarten Derickx, Jeremy Rouse, Andrew Sutherland, and David Zureick-Brown for helpful conversations. We also thank David Zywina for making his code to compute Galois images available, and David Roe for further making Zywina's code and implementation more readily available for other mathematicians to use; this project would not have been possible without their work. This project began at the \emph{COmputations and their Uses in Number Theory} conference at CIRM in March 2023; we thank the organizers as well as CIRM for providing the opportunity for collaboration. We are very grateful for the suggestions and feedback of the anonymous referee.

AB was supported by NSF grants DMS-2145270 and DMS-1928930. Part of the work was completed while this author was in residence at the Simons Laufer Mathematical Science Institute in Berkeley, CA, during the semester of Diophantine Geometry. TK was partially supported by the 2021 MSCA Postdoctoral Fellowship 01064790 -- ExplicitRatPoints. DLD was supported by the Simons Collaboration in Arithmetic Geometry, Number Theory, and Computation via the Simons Foundation grant 546235. TM was partially supported by the Commonwealth Cyber Initiative. FN is supported by the QuantiXLie Centre of Excellence, a
  project co-financed by the Croatian Government and European Union
  through the European Regional Development Fund - the Competitiveness
  and Cohesion Operational Programme (Grant KK.01.1.1.01.0004) and by the Croatian
Science Foundation under the project no. IP-2022-10-5008. HS is supported by the DFG-grant STO 299/17-1.
MvH was supported by NSF grant 2007959.

\section{Background}
\label{sec:background}
\subsection{Isolated Points on Curves} \label{sec:2.1}
Let $C$ be a curve, by which we will mean a smooth projective geometrically integral $1$-dimensional scheme defined over a number field $k$; we suppose all curves satisfy these assumptions throughout the paper. To streamline our exposition, we assume there exists a point $P_0 \in C(k)$, but this is not required; see \cite[$\S4$]{BELOV}. Throughout, we consider closed points of the curve $C$, which correspond to $\Gal_k$-orbits of points in $C(\overline{k})$. The degree of $x$ is defined to be the degree of the residue field $k(x)$ over $k$, or alternatively, to be the length of the $\Gal_k$-orbit of points in $C(\overline{k})$ corresponding to $x$.

To any closed point $x \in C$ of degree $d$ we associate the $k$-rational effective divisor
\[
P_1+ \cdots +P_{d}, 
\] 
where $P_1, \dots, P_{d}$ are the points in the $\Gal_{k}$-orbit associated to $x$. With this identification, we can study the image of $x$ under the natural map from the $d$th symmetric power of $C$ to the curve's Jacobian
\begin{align}
\label{eqn:phid}
\Phi_d\colon C^{(d)} \rightarrow \Jac(C)
\end{align}
which sends an effective divisor $D$ of degree $d$ to the class $[D - dP_0]$. If $\Phi_d(x)=\Phi_d(y)$ for some other point $y \in C^{(d)}(k)$, then there exists a non-constant function $f \in k(C)^{\times}$ with $\text{div}(f)=x-y$. Since $x$ is a degree $d$ point and $x\not=y$, the divisors associated to $x$ and $y$ have distinct support so $f\colon C \rightarrow \mathbf{P}^1_{k}$ is a dominant morphism of degree $d$.\footnote{In particular, this shows $\Phi_d$ is injective if $d$ is less than the $k$-gonality of $C$, which is the least degree of a non-constant rational map to $\mathbf{P}^1$.} By Hilbert's irreducibility theorem \cite[Chapter 9]{serre97}, $f^{-1}(\mathbf{P}^{1}(k))$ will contain infinitely many closed points of degree $d$. On the other hand, if $\Phi_d$ is injective on closed points of degree $d$, then Faltings's Theorem \cite{faltings} implies that all but finitely many such points are parametrized by translates of positive rank abelian subvarieties of $\Jac(C)$. This inspires the following:
\begin{definition} Let $C$ be a curve defined over a number field $k$. Let $\Phi_d$ be the map in \eqref{eqn:phid}.
\begin{enumerate}
\item A closed point $x \in C$ of degree $d$ is $\PP^1$\textbf{-parametrized} if there exists a point $x' \in C^{(d)}(k)$ with $x' \neq x$ such that $\Phi_d(x)=\Phi_d(x')$. Otherwise, we say $x$ is $\PP^1$\textbf{-isolated}.
\item A closed point $x \in C$ of degree $d$ is \textbf{AV-parametrized} if there exists a positive rank abelian subvariety $A/k$ with  $A \subset \Jac(C)$ such that $\Phi_d(x)+A \subset \im(\Phi_d)$. Otherwise, we say $x$ is \textbf{AV-isolated}.
\item A closed point $x \in C$ of degree $d$ is \textbf{isolated} if it is both $\PP^1$-isolated and AV-isolated.
\item A closed point $x \in C$ of degree $d$ is \textbf{sporadic} if there are only finitely many closed points of $C$ of degree at most $\deg(x)$.
\end{enumerate}
\end{definition}

If $C$ has genus $g\geq 2$, then the collection of all points on $C$ with coordinates in $k$ is finite by Faltings's theorem \cite{faltings83}. In general, the set $C(k)$ sits inside a larger finite set of points, namely, the set of all isolated points of $C$.
    \begin{theorem}[{Bourdon, Ejder, Liu, Odumodu, Viray, \cite[Theorem 4.2]{BELOV}}]\label{thm:FiniteIsolated}
        Let $C$ be a curve over a number field.
        \begin{enumerate}
            \item There are infinitely many degree $d$ points on $C$ if and only if there is a degree $d$ point on $C$ that is \emph{not} isolated.
            \item There are only finitely many isolated points on $C$.
        \end{enumerate}
    \end{theorem}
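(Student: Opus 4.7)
My plan is to deduce both parts from Faltings's theorem on subvarieties of abelian varieties (the Mordell--Lang conjecture) applied to the Abel--Jacobi maps $\Phi_d\colon C^{(d)}\to\Jac(C)$. By Mordell--Weil, $\Jac(C)(k)$ is finitely generated, and Faltings's theorem then yields, for each $d$, a finite decomposition
\[
\Phi_d(C^{(d)}(k)) \;\subset\; \bigcup_{i} (T_i + A_i(k)), \qquad T_i + A_i \subset \Phi_d(C^{(d)}),
\]
where each $A_i$ is an abelian subvariety of $\Jac(C)$ and $T_i\in\Jac(C)(k)$. This decomposition is the structural backbone of the whole argument: positive-rank $A_i$ will detect AV-parametrization, while positive-dimensional fibers of $\Phi_d$ will detect $\PP^1$-parametrization.

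For part~(i), the ``$\Leftarrow$'' direction is essentially unpacking the definitions with results already quoted in the paper. If the degree-$d$ point $x$ is $\PP^1$-parametrized, the discussion in Section~\ref{sec:2.1} produces a dominant degree-$d$ morphism $f\colon C\to\PP^1_k$, and Hilbert's irreducibility theorem then supplies infinitely many $P\in\PP^1(k)$ with $f^{-1}(P)$ a single closed point of degree $d$. If $x$ is AV-parametrized with witnessing subvariety $A$, the infinite set $\Phi_d(x)+A(k)$ lies inside $\Phi_d(C^{(d)}(k))$; a pigeonhole bookkeeping, using that any $k$-rational effective degree-$d$ divisor has only finitely many summands, produces infinitely many new degree-$d$ closed points on $C$. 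For ``$\Rightarrow$'', infinitely many degree-$d$ closed points force $\Phi_d(C^{(d)}(k))$ to be infinite, so in the decomposition above one term $T_i+A_i(k)$ is infinite and the corresponding $A_i$ has positive rank. Any degree-$d$ point $x$ with $\Phi_d(x)\in T_i+A_i$ then satisfies $\Phi_d(x)+A_i \subset T_i+A_i \subset \im\Phi_d$, exhibiting AV-parametrization. If no such $x$ exists among the infinite pool of degree-$d$ points, they must cluster in finitely many fibers of $\Phi_d$, so some fiber contains infinitely many $k$-points; such a fiber is a complete linear system $\cong\PP^{r}$ with $r\geq 1$, giving $\PP^1$-parametrization.

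For part~(ii), I would first bound the degree of any isolated point by the genus $g$ of $C$. Riemann--Roch gives $h^0(D)\geq d-g+1$ for any effective degree-$d$ divisor $D$, so for $d\geq g+1$ the fiber $\Phi_d^{-1}(\Phi_d(x))$ is a complete linear system of dimension $\geq 1$, hence has infinitely many $k$-points; every degree-$d$ point is $\PP^1$-parametrized and not isolated. For each remaining $d\leq g$ (a finite range), any AV-isolated point of degree $d$ maps via $\Phi_d$ into the finite set of zero-dimensional components of the Mordell--Lang decomposition, and over each such image point there is at most one $\PP^1$-isolated preimage (two would directly witness $\PP^1$-parametrization of both). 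Summing over the bounded range yields finitely many isolated points in total. The deepest ingredient is Faltings's theorem on subvarieties of abelian varieties, and I expect this to be the main obstacle; once it is invoked, the remaining arguments reduce to Riemann--Roch bookkeeping combined with Hilbert irreducibility.
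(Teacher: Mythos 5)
The paper itself does not prove this theorem; it is quoted from \cite[Theorem 4.2]{BELOV}, so your attempt can only be measured against that argument. Much of what you write is sound and close in spirit to it: the reduction of part~(ii) to degrees $d\leq g$ via Riemann--Roch, the use of Faltings's theorem to confine the images of AV-isolated points to the finitely many rank-zero cosets, and the observation that at most one $\PP^1$-isolated degree-$d$ point can lie over each such image are all correct. The ``$\Rightarrow$'' direction of (i) is also essentially right, although your opening claim that infinitely many degree-$d$ points ``force $\Phi_d(C^{(d)}(k))$ to be infinite'' is not justified and in fact contradicts your own fallback case; the correct dichotomy is: either some degree-$d$ point maps into a coset $T_i+A_i(k)$ with $A_i$ of positive rank (AV-parametrized), or all images lie in the finitely many rank-zero cosets, so some fiber of $\Phi_d$ contains two distinct rational points of $C^{(d)}$ ($\PP^1$-parametrized).

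The genuine gap is in the ``$\Leftarrow$'' direction of (i), in the AV-parametrized case. From $\Phi_d(x)+A(k)\subseteq \Phi_d(C^{(d)}(k))$ (itself asserted without proof; it needs the rational point $P_0$ and invariance of $h^0$ under base change to lift $k$-points of $\Phi_d(C^{(d)})$ to rational effective divisors) you obtain infinitely many rational effective divisors of degree $d$, and then claim by ``pigeonhole bookkeeping'' that this yields infinitely many closed points of degree $d$. Pigeonhole only gives infinitely many closed points of degree \emph{at most} $d$ --- indeed of some single degree $e\leq d$ --- and the inference from ``infinitely many rational effective degree-$d$ divisors'' to ``infinitely many degree-$d$ closed points'' is false in general: for example, a hyperelliptic curve of genus $5$ with a rational point and geometrically simple Jacobian has infinitely many rational effective divisors of degree $3$ (a rational point plus any of its infinitely many quadratic points) but only finitely many cubic points. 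Your argument never uses the defining feature of the hypothesis, namely that the translate $\Phi_d(x)+A$ passes through the class of the \emph{irreducible} degree-$d$ point $x$; the real work in this direction (and in \cite{BELOV}) is to exploit exactly that --- e.g., when $x$ is in addition $\PP^1$-isolated, the generic class $\Phi_d(x)+a$ has a unique effective representative $D_a$, which is automatically $k$-rational, and one must then show that infinitely many of the $D_a$ are irreducible, which requires an argument beyond counting summands. As written, this step fails, so the proposal does not establish part~(i).
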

It follows from Theorem \ref{thm:FiniteIsolated} that every sporadic point is isolated, but the converse need not hold.

\subsection{Modular Curves} \label{sec:modular_curves}
For any subgroup $H \leq \GL_2(\Z/N\Z)$, we define the modular curve $X_H$ to be the coarse space of the stack $\mathcal{M}_H$, as defined in Deligne--Rapoport \cite{DR}. The curve $X_H$ is a scheme over $\text{Spec }\Z[1/N]$ and parametrizes generalized elliptic curves with $H$-level structure. In particular, its $k$-rational points roughly classify elliptic curves over $k$ whose mod $N$ image is contained in $H$; see, for example, \cite[$\S2.3$]{RSZB2022} for details. If we take
\[
B_1(N) =\left\{ \begin{pmatrix}
1 & * \\
0 & *
\end{pmatrix} \right \} \leq \GL_2(\Z/N\Z),
\]
then $X_{B_1(N)}=X_1(N)$, the modular curve whose noncuspidal points parametrize elliptic curves with a distinguished point of order $N$. There is an analytic isomorphism between $X_1(N)(\mathbf{C})$ and the Riemann surface constructed as a quotient of the extended upper-half plane by the congruence subgroup
\[
\Gamma_1(N) \coloneqq\left\{ \begin{pmatrix}
a & b \\
c & d
\end{pmatrix} \in \SL_2(\Z) \, : \, c \equiv 0 \pmod{N} \text{ and } a \equiv d \equiv 1 \pmod{N} \right \},
\]
with matrices acting via linear fractional transformations. If $N \geq 4$, then $\mathcal{M}_{B_1(N)}$ is its own coarse moduli space, and so noncuspidal $k$-rational points of $X_1(N)$ classify pairs $(E,P)$, where $E/k$ is an elliptic curve and $P\in E(k)$, up to $k$-isomorphism.

We may also define a modular curve associated to an open subgroup $G$ of $\GL_2(\widehat{\Z})$. For any $N \in \Z^+$, let $\pi\colon\GL_2(\widehat{\Z}) \rightarrow \GL_2(\Z/N\Z)$ be the natural projection map, and define $G(N) \coloneqq \pi(G)$. We say $G$ has \textbf{level} $N$ if $G=\pi^{-1}(G(N))$ and $N$ is minimal with respect to this property. If $\det(G)=\widehat{\Z}^{\times}$, we define the modular curve $X_G \coloneqq X_{G(N)}$ where $N$ is the level of $G$. If $G=\GL_2(\widehat{\Z})$, then we identify $X_G \cong \PP^1$ with the $j$-line.

\subsection{Closed Points on Modular Curves} \label{sec:ClosedPts} To discuss isolated points on modular curves, we must consider closed points on $X_1(N)$, viewed always as a scheme over $\Q$. Let $k$ be a field with an embedding of $k$ into $\overline \Q$. Given an elliptic curve $E/k$ with $P \in E(k)$ of order $N$, the pair $(E,P)$ induces a $k$-valued point on $X_1(N)$ via the moduli interpretation described above. We denote this $k$-valued point by $(E,P)_k$, and by definition it corresponds to a morphism of $\Q$-schemes $f\colon\text{Spec } k \rightarrow X_1(N)$. The map $f$ sends the unique point of $\text{Spec } k$ to a point $x \in X_1(N)$, and we call $x$ the \textbf{closed point} associated to $(E,P)$.\footnote{Note $x$ is indeed closed since $\Q(x)/\Q$ is finite; see, for example, \cite[Exercise~5.9, p.~76]{Liu2002}.} We define the \textbf{degree} of $x$ to be the degree of the residue field $\Q(x)$ over $\Q$. Since there are many scheme-valued points which induce the same closed point, it is sometimes preferable to consider Galois orbits of points in $X_1(N)(\overline{\Q})$, which are in bijection with the set of closed points. Thus one could alternatively define the closed point associated to $(E,P)$ as the $\Gal_\Q$-orbit of $(E,P)_{\overline{\Q}}$.

\begin{remark} Given a $k$-valued point $(E,P)_k$, we note that the degree of the associated closed point $x$ may be strictly less than the degree of $k$. However, there always exists $E'/\Q(x)$ with $j(E')=j(E)$ and $P' \in E'(\Q(x))$, where the point $P' \in E'$ maps to $P \in E$ under a $\overline \Q$-isomorphism sending $E'$ to $E$. See \cite[p.~274, Proposition~VI.3.2]{DR}. The pair $(E',P')$ gives a $\Q(x)$-valued point, and it is the unique $\Q(x)$-valued point such that $(E,P)_{k}=(E',P')_{k}.$
\end{remark}

Let $E/k$ be an elliptic curve and let $P \in E(k)$ a point of order $N$.
For any $\xi \in \Aut(E)$, the pair $(E,\xi P)$ induces the same closed point $x \in X_1(N)$, since $\xi$ provides the necessary isomorphism. This can be used to obtain a more explicit description of the residue field $\Q(x)$.

\begin{lemma}\label{ResidueFieldLemma}
Let $E$ be an elliptic curve defined over $\Q(j(E))$, and let $P \in E$ be a point of order $N$. Then the residue field of the closed point $x \in X_1(N)$ associated to $(E,P)$ is given by
\[
\Q(x)\cong\Q(j(E),\mathfrak{h}(P)),
\]
where $\mathfrak{h} \rightarrow E/\Aut(E) \cong \mathbf{P}^1$ is a Weber function for $E$.
\end{lemma}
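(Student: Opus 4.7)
The plan is to identify $\Q(x)$ with the fixed field of the stabilizer of $x$ under $\Gal_\Q$ and then compute this stabilizer using the moduli interpretation of $X_1(N)$. Since the $\overline{\Q}$-points of $X_1(N)$ classify pairs of an elliptic curve over $\overline{\Q}$ with a point of order $N$, up to $\overline{\Q}$-isomorphism, and since closed points correspond to $\Gal_\Q$-orbits on this set, the residue field $\Q(x)$ equals $\overline{\Q}^H$ where
\[
H = \{\sigma \in \Gal_\Q : (E^\sigma, P^\sigma) \cong (E, P) \text{ over } \overline{\Q}\}.
\]
By Galois theory, it therefore suffices to prove that $H = \Gal(\overline{\Q}/\Q(j(E),\mathfrak{h}(P)))$.

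I would argue this by splitting according to whether $\sigma \in \Gal_\Q$ fixes $j(E)$. If $\sigma(j(E)) \neq j(E)$, then $j(E^\sigma) \neq j(E)$, so no $\overline{\Q}$-isomorphism $E^\sigma \to E$ can exist and $\sigma \notin H$. If instead $\sigma$ fixes $j(E)$, then $\sigma$ fixes $\Q(j(E))$, and since $E$ is assumed to be defined over $\Q(j(E))$, we have $E^\sigma = E$ scheme-theoretically. The condition $\sigma \in H$ then reduces to the existence of $\xi \in \Aut_{\overline{\Q}}(E)$ with $\xi(P^\sigma) = P$, that is, $P$ and $P^\sigma$ lie in the same $\Aut(E)$-orbit. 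Because $\mathfrak{h}$ is, by construction, the quotient map $E \to E/\Aut(E) \cong \mathbf{P}^1$, this orbit equivalence is exactly $\mathfrak{h}(P^\sigma) = \mathfrak{h}(P)$.

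To close the argument, I would use the fact that a standard Weber function can be chosen defined over $\Q(j(E))$ (e.g.\ taking the $x$-coordinate, or its square or cube in the $j=1728$ and $j=0$ cases, using a model of $E$ over $\Q(j(E))$). Consequently, for $\sigma$ fixing $\Q(j(E))$, we have $\mathfrak{h}(P^\sigma) = \mathfrak{h}(P)^\sigma$, so the condition $\mathfrak{h}(P^\sigma) = \mathfrak{h}(P)$ becomes $\sigma(\mathfrak{h}(P)) = \mathfrak{h}(P)$. Combining both cases, $\sigma \in H$ if and only if $\sigma$ fixes both $j(E)$ and $\mathfrak{h}(P)$, giving $H = \Gal(\overline{\Q}/\Q(j(E),\mathfrak{h}(P)))$ and hence $\Q(x) = \Q(j(E),\mathfrak{h}(P))$.

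The main obstacle is not a difficult computation but rather the careful bookkeeping around the coarse moduli interpretation: one must verify that the Galois action on closed points of $X_1(N)$ really does correspond to the evident action $(E,P) \mapsto (E^\sigma, P^\sigma)$ on isomorphism classes over $\overline{\Q}$, and that the resulting $\mathfrak{h}(P)$ is well-defined independent of the choice of identification $E/\Aut(E) \cong \mathbf{P}^1$ (any two such identifications over $\Q(j(E))$ differ by a $\Q(j(E))$-automorphism of $\mathbf{P}^1$, which leaves the generated field unchanged). Once these points are clarified, the argument reduces to the Galois-theoretic computation above.
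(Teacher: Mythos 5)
Your proposal is correct, but note that the paper does not actually prove this lemma itself: its entire proof is a citation to \cite[Lemma~2.5]{BourdonNajman2021}. Your stabilizer argument is essentially the standard proof underlying that citation. The two points on which it genuinely hinges are exactly the ones you flag at the end: (1) that the bijection between noncuspidal points of $X_1(N)(\overline{\Q})$ and $\overline{\Q}$-isomorphism classes of pairs $(E,P)$ is $\Gal_\Q$-equivariant, which holds because coarse moduli space formation commutes with the base change $\Q \to \overline{\Q}$ in characteristic zero, so the residue field of $x$ is indeed (isomorphic to) the fixed field of the stabilizer $H$ of the geometric point; and (2) that a Weber function can be chosen $\Q(j(E))$-rational for a model of $E$ over $\Q(j(E))$ (the explicit formula of Remark~\ref{ResidueFieldRmk}, with the $x^2$ and $x^3$ variants when $j=1728$ or $j=0$), so that $\mathfrak{h}(P^\sigma)=\sigma(\mathfrak{h}(P))$ for $\sigma$ fixing $\Q(j(E))$. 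Granting these, your case split is sound: if $\sigma(j(E))\neq j(E)$ then $\sigma\notin H$; if $\sigma$ fixes $j(E)$ then $E^\sigma=E$ because $E$ is defined over $\Q(j(E))$, and the defining property $\mathfrak{h}(P)=\mathfrak{h}(P')\iff P'\in\Aut(E)\cdot P$ converts membership in $H$ into $\sigma(\mathfrak{h}(P))=\mathfrak{h}(P)$, giving $H=\Gal(\overline{\Q}/\Q(j(E),\mathfrak{h}(P)))$ and hence the lemma.
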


\begin{proof}
See, for example, \cite[Lemma 2.5]{BourdonNajman2021}.
\end{proof}

 \begin{remark}\label{ResidueFieldRmk}At times, it can be useful to work with an explicit model-independent formulation of a Weber function. For example, if $E:y^2=4x^3-c_2x-c_3$ and $j(E)\neq 0,1728$, we can define
\[
\mathfrak{h}((x,y))=\frac{c_2c_3}{\Delta}x,
\]
where $\Delta=c_2^3-27c_3^2$. One can verify:
\begin{enumerate}
    \item We have $\mathfrak{h}(P)=\mathfrak{h}(P')$ if and only if $P=\xi P'$ for some $\xi \in \Aut(E)$.
    \item If $\eta \colon E \rightarrow E'$ is an isomorphism, then $\mathfrak{h}_E=\mathfrak{h}_{E'}\circ \eta$.
\end{enumerate}
See \cite[p. 107]{shimura}. In particular, if $E/\Q(j(E))$ does not have complex multiplication and $P=(x_0,y_0)$, then one can take $\Q(x)\cong\Q(j(E),x_0)$. 
\end{remark}

\begin{example}[Closed points versus geometric points I]
Let $E_1\colon y^2+xy+y=x^3-x^2-3x+3$ and $P_1=(-1,-2)$ be a point of order 7. Then $(E_1,P_1)$ gives a $\Q$-valued point on $X_1(7)/\Q$ and also a closed point $x\in X_1(7)$ of degree 1. On the other hand, let $E_2\colon y^2=x^3-43x-166$ and $P_2=(5,\sqrt{-256})$ be a point of order 7. Then $(E_2,P_2)$ gives a $\Q(\sqrt{-256})$-valued point on $X_1(7)/\Q$ and also a closed point $x\in X_1(7)$ of degree 1 by Remark \ref{ResidueFieldRmk}. However, both $(E_1,P_1)$ and $(E_2,P_2)$ induce the same geometric point on $X_1(7)$ since $(E_1,P_1)_{\overline{\Q}}=(E_2,P_2)_{\overline{\Q}}$.

In fact, $(E_1,P_1)_{{\Q(\sqrt{-256})}} =(E_2,P_2)_{{\Q(\sqrt{-256})}}$. This association is made naturally via the Kubert--Tate normal form associated to $E_2/\Q(\sqrt{-256})$, with $P_2=(5,\sqrt{-256})$:
\[
    E_3\colon y^2-xy-4y=x^3-4x^2,\, \, P_3=(0,0).
\]
We can check that $(E_1,P_1)_{\Q}=(E_3,P_3)_{\Q}$ and $(E_2,P_2)_{{\Q(\sqrt{-256})}} =(E_3,P_3)_{{\Q(\sqrt{-256})}}$. Thus it is fair to say that $(E_2,P_2)$ induces a $\Q$-valued point, even though $P_2$ is not defined over $\Q$.
\end{example}

\begin{example}[Closed points versus geometric points II]
The distinction between closed points and geometric points can be seen when counting the number of points of a particular degree. For example, let $E$ be the elliptic curve with LMFDB label \href{https://www.lmfdb.org/EllipticCurve/Q/162/c/3}{162.c3}. Then $E$ possesses a unique $\Q$-rational subgroup of order 21 with generator $P$. For each $a \in (\Z/21\Z)^{\times}$, we consider the geometric point on $X_1(21)$ associated to $(E,aP)$. Since $(E,aP)_{\overline{\Q}}=(E,-aP)_{\overline{\Q}}$, we find that there are six distinct geometric points corresponding to $(E,aP)$ for $a \in (\Z/21\Z)^{\times}$. However, these six $\overline{\Q}$-points lie in two Galois orbits, each of size 3. Thus there are two closed points of degree 3, and the cardinality of the Galois orbit equals the degree.
\end{example}


\subsection{Maps Between Modular Curves}

\begin{proposition} \label{prop:mapdegree}
If $G \subseteq G' \subseteq \GL_2(\widehat{\Z})$ are two open subgroups with surjective determinant, then there is a natural $\Q$-rational morphism $X_G \rightarrow X_{G'}$ of degree $[\pm G': \pm G]$. Here, $\pm G$ denotes the subgroup generated by $G$ and $-I_2$.
\end{proposition}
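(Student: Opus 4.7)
The plan is to reduce the problem to a common finite level, construct the morphism from a forgetful map of moduli stacks, and then compute its degree using the complex-analytic description of modular curves. First, since $G \subseteq G'$ are open in $\GL_2(\widehat{\Z})$, I would choose a positive integer $N$ divisible by the levels of both groups, so that $G = \pi_N^{-1}(H)$ and $G' = \pi_N^{-1}(H')$ for some subgroups $H \subseteq H' \leq \GL_2(\Z/N\Z)$, where $\pi_N$ is the reduction map. By the definition of $X_G$ and $X_{G'}$ recalled in Section \ref{sec:modular_curves} (together with the canonical identification $X_H \iso X_{\pi_{M,N}^{-1}(H)}$ for any multiple $M$ of $N$), it suffices to produce the desired map $X_H \to X_{H'}$.

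Next, the inclusion $H \hookrightarrow H'$ induces a forgetful morphism of moduli stacks $\cM_H \to \cM_{H'}$ over $\Spec \Z[1/N]$, which descends to a morphism of coarse moduli schemes $X_H \to X_{H'}$. Since this construction is functorial in the moduli problem and the stacks are themselves defined over $\Spec \Z[1/N]$, the morphism is defined over $\Z[1/N]$, hence in particular over $\Q$ after passing to the generic fibre.

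To compute the degree, I would pass to $\mathbf{C}$ and use the analytic uniformization. The key observation is that $-I_2$ acts trivially on the upper half-plane, so the coarse space $X_H(\mathbf{C})$ depends only on $\pm H$, the subgroup generated by $H$ and $-I_2$. Concretely, writing
\[
\Gamma(\pm H) \coloneqq \{\gamma \in \SL_2(\Z) : \gamma \bmod N \in \pm H\},
\]
there is an analytic identification $X_H(\mathbf{C}) \iso \Gamma(\pm H) \backslash \cH^*$, and similarly for $H'$, so the induced map of Riemann surfaces has degree $[\Gamma(\pm H') : \Gamma(\pm H)] = [\pm H' \cap \SL_2(\Z/N\Z) : \pm H \cap \SL_2(\Z/N\Z)]$.

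Finally, I would use the surjective-determinant hypothesis to convert this $\SL_2$-index into the full $\GL_2$-index. Since $\det$ maps $\pm H$ and $\pm H'$ onto $(\Z/N\Z)^\times$, the short exact sequence $1 \to \pm H \cap \SL_2(\Z/N\Z) \to \pm H \to (\Z/N\Z)^\times \to 1$ together with its analogue for $\pm H'$ gives $[\pm H' : \pm H] = [\pm H' \cap \SL_2(\Z/N\Z) : \pm H \cap \SL_2(\Z/N\Z)]$, and the left-hand side equals $[\pm G' : \pm G]$ since taking preimages under $\pi_N$ preserves indices. No single step should present a serious obstacle; the only point requiring care is the bookkeeping around $-I_2$, which acts trivially on $\cH$ but may not already belong to $H$, so the coarse-moduli degree is naturally controlled by $\pm H$ rather than $H$ itself.
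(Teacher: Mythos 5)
Your proof is correct, and its core — computing the degree over $\bC$ via the uniformization $\Gamma(\pm H)\backslash \mathcal{H}^*$, with the $\pm I_2$ bookkeeping absorbed into $\pm H$ — is the same as the paper's. The one genuine difference is how the $\Q$-rational morphism is produced: the paper gets it from Galois theory of function fields, using $\Q(X_H)=\Q(X(N))^H$ to see $\Q(X_{G'})\subseteq \Q(X_G)$, whereas you invoke functoriality of the Deligne--Rapoport moduli stacks ($\cM_H\to\cM_{H'}$ for $H\subseteq H'$) and descend to coarse spaces; both are standard and equally valid, with the function-field route being slightly more self-contained given the paper's setup and your route making the moduli interpretation (hence compatibility with the maps used elsewhere, e.g.\ $(E,P)\mapsto(E,bP)$) more transparent. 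You also spell out a step the paper leaves implicit: converting the $\SL_2$-index $[\pm\Gamma':\pm\Gamma]$ into the $\GL_2$-index $[\pm G':\pm G]$ via surjectivity of the determinant (and surjectivity of $\SL_2(\Z)\to\SL_2(\Z/N\Z)$, so that indices are preserved under taking preimages), which is exactly the point where the hypothesis $\det(G)=\det(G')=\widehat{\Z}^{\times}$ enters; making this explicit is a small but worthwhile improvement over the paper's ``the degree of $f$ is as claimed.''
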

\begin{proof}
Let $N$ be the level of $G$. For any subgroup $H$ of $\GL_2(\Z/N\Z)$, we have $\Q(X_H)=\Q(X(N))^H$. Now by Galois theory it follows that $\Q(X_G) \supseteq \Q(X_{G'})$, so we conclude that there exists a $\Q$-rational morphism $f\colon X_G \rightarrow X_{G'}$.

To determine its degree, let $\Gamma$ and $\Gamma'$ be the intersection with $\SL_2(\Z)$ of the inverse image of $G$ and $G'$ in $\GL_2(\Z)$. Over $\bC$, the morphism $f$ is the quotient map $\Gamma \backslash \mathcal{H}^* \rightarrow \Gamma' \backslash \mathcal{H}^*$. Here, $\mathcal{H}^*$ is the extended complex upper half plane. Since the kernel of the action of $\SL_2(\Z)$ on $\mathcal H$ is $\pm I$, the degree of $f$ is as claimed.
\end{proof}

Let $a$ and $b$ be positive integers. Taking $G=\pi^{-1}(B_1(ab))$ and $G'=\pi^{-1}(B_1(a))$ gives the following corollary, which under the moduli interpretation corresponds to sending $(E,P)$ to $(E,bP)$.  

\begin{corollary}\label{Cor:DegreeFormula}
                For positive integers $a$ and $b$, the natural $\Q$-rational map $f\colon X_1(ab) \rightarrow X_1(a)$ has
                \[
                    \deg(f)=
                    c_{f}\cdot b^2 \prod_{p \mid b,\, p \nmid a}
                    \left(1-\frac{1}{p^2}\right),
                \]
                where $c_{f}=\frac{1}{2}$ if $a \leq 2$ and $ab>2$, and $c_{f}=1$ otherwise.
\end{corollary}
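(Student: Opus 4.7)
The plan is to apply Proposition~\ref{prop:mapdegree} directly. Take $G = \pi^{-1}(B_1(ab))$ and $G' = \pi^{-1}(B_1(a))$ inside $\GL_2(\widehat{\Z})$; both have surjective determinant (the determinant map $B_1(N) \to (\Z/N\Z)^\times$, $(1,*;0,d) \mapsto d$, is surjective), so $\deg(f) = [\pm G' : \pm G]$. This splits the computation into two independent pieces: the plain index $[G':G]$, and the $\{\pm I\}$-adjustment.

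For the index, I would work at the common level $ab$ and use the classical formula
\[
[\SL_2(\Z) : \Gamma_1(N)] \;=\; N^2 \prod_{p \mid N}\!\left(1 - \tfrac{1}{p^2}\right),
\]
which is equivalent, via $\Gamma_1(N) = \SL_2(\Z) \cap \pi^{-1}(B_1(N))$ and surjectivity of $\SL_2(\Z) \to \SL_2(\Z/N\Z)$, to the analogous index statement for $B_1$-subgroups of $\GL_2(\widehat{\Z})$ with surjective determinant. Dividing,
\[
[G' : G] \;=\; \frac{(ab)^2 \prod_{p \mid ab}(1 - 1/p^2)}{a^2 \prod_{p \mid a}(1 - 1/p^2)} \;=\; b^2 \prod_{\substack{p \mid b \\ p \nmid a}}\!\left(1 - \tfrac{1}{p^2}\right).
\]

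For the $c_f$ factor I would use the elementary observation that $-I \in B_1(N)$ if and only if $-1 \equiv 1 \pmod{N}$, i.e.\ $N \in \{1,2\}$. Writing $[\pm G' : \pm G] = [G':G]/c$, a direct case analysis on whether $-I$ lies in $G$ and/or $G'$ gives: $c = 2$ precisely when $-I \in G' \setminus G$, which by the observation happens exactly when $a \le 2$ and $ab > 2$; in all other cases $c = 1$. This reproduces the piecewise definition of $c_f$, and combining with the index computation yields the stated formula.

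The only real pitfalls are the small-level boundary cases: one must verify the answer on the nose when $a \in \{1,2\}$ and $b$ is small (in particular the degenerate inputs $ab \le 2$, where $f$ is an isomorphism and the product $\prod_{p \mid b,\, p \nmid a}(1 - 1/p^2)$ is empty), and confirm that the moduli-theoretic identification of $f$ with $(E,P) \mapsto (E,bP)$ matches the group-theoretic map of Proposition~\ref{prop:mapdegree}. Neither is difficult, but they are where an off-by-$\frac{1}{2}$ error would otherwise creep in.
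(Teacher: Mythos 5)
Your proposal is correct and follows essentially the same route as the paper: the corollary is obtained by applying Proposition~\ref{prop:mapdegree} with $G=\pi^{-1}(B_1(ab))$ and $G'=\pi^{-1}(B_1(a))$, the index $[\pm G':\pm G]$ then being the classical $\Gamma_1$-index computation. You simply make explicit the standard index formula and the $-I$ case analysis that the paper leaves implicit, and both are handled correctly, including the boundary cases $a\le 2$.
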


\subsection{Galois Representations} If $E$ is an elliptic curve defined over a number field $k$, then $\Gal_k$ acts naturally on the $\overline{k}$-points of $E$. On torsion points, this action is described by the \textbf{adelic Galois representation} associated to $E/k$,
\[
\rho_{E}\colon \Gal_k \rightarrow \Aut(E(\overline{k})_{\text{tors}}) \cong \GL_2(\widehat{\Z}).
\]
From this we can obtain two other Galois representations. On the one hand, fixing a positive integer $m$, we can choose to record the action of $\Gal_k$ on points whose order is divisible only by those primes dividing $m$. This is the \textbf{$m$-adic Galois representation} associated to $E$:
\[
\rho_{E,m^{\infty}}\colon \Gal_k \xrightarrow{\rho_E} \GL_2(\widehat{\Z}) \cong  \prod_{p \text{ prime}} \GL_2(\Z_{p}) \xrightarrow{\proj} \prod_{p \mid m} \GL_2(\Z_{p}).
\]
In particular, if $m=\ell$ is a prime number, we recover the standard $\ell$-adic representation associated to $E$. Alternatively, we may wish to record the Galois action only on points of order dividing $m$. We use $E[m]$ to denote the finite subgroup of such points. This gives the \textbf{mod $m$ Galois representation} associated to $E$,
\[
\rho_{E,m}\colon \Gal_k \rightarrow \Aut(E[m]) \cong \GL_2(\Z/m\Z).
\]
Note that $\rho_{E,m}$ agrees with the reduction of $\rho_E$ mod $m$.

If $E/k$ is a non-CM elliptic curve, we define the \textbf{level} of $\rho_E$ to be the smallest positive integer $N$ such that $\im \rho_E=\pi^{-1}(\im \rho_{E,N})$, where $\pi\colon\GL_2(\widehat{\Z}) \rightarrow \GL_2(\Z/N\Z)$ is the natural reduction map; that such an $N$ exists is a consequence of Serre's Open Image Theorem \cite{serre72}. The level of $\rho_{E,m^{\infty}}$ may be defined in an analogous way. We take the convention that $\GL_2(\Z/1\Z)$ denotes the trivial group, so level 1 corresponds to the associated Galois representation being surjective. Though $\rho_E$ is never surjective when $k=\Q$, this can occur for elliptic curves defined over number fields of higher degree \cite[Theorem 1.2]{Greicius2010}.

\section{Overview of the Main Algorithm}
\label{sec:mainalg}
The following algorithm is the main procedure for determining whether a given non-CM $j$-invariant in $\Q$ is the image of an isolated point on $X_1(N)$ under the map to the $j$-line, based on results in \cite{BELOV,ZywinaAlgorithm}. We note that for any CM $j$-invariant $j$, there exist infinitely many $N\in\Z^+$ for which there is a sporadic point $x\in X_1(N)$ with $j(x)=j$ by \cite[Theorem 7.1]{BELOV}, so it is not necessary to consider them in this algorithm. The outline below gives an overview of the structure, while the algorithms to perform particular steps are described in detail in Sections \ref{sec:madicrep}, \ref{sec:primitivedeg}, and \ref{sec:genus0images}. We will prove a theorem on the validity of Algorithm \ref{alg:mainalgorithm} in Section \ref{sec:validity}.

\setcounter{algocf}{0}
\begin{algorithm}[h!]
\caption{Main Algorithm}\label{alg:mainalgorithm}
\KwIn{A non-CM $j$-invariant $j \in \Q$.}
\KwOut{A finite list $\{(a_1, d_1), \ldots, (a_k, d_k)\}$ of (level, degree) pairs such that $j$ is isolated (respectively, sporadic) if and only if there exists an isolated (respectively, sporadic) point $x \in X_1(a_i)$ of degree $d_i$ with $j(x) = j$ for some $(a_i, d_i)$ in the list.}
Construct an elliptic curve $E/\Q$ with $j(E)=j$.\\
Compute the adelic image $G$ of $E/\Q$ as a subgroup of $\GL_2(\widehat{\Z})$ using Zywina's algorithm \cite{ZywinaAlgorithm}. Represent the output as the level $N$ and the subgroup $G(N)$ of $\GL_2(\Z/N\Z)$.\\
Apply Algorithm \ref{alg:reduce_level} to $G(N)$ to obtain the level $m_0$ of the $m$-adic Galois representation associated to $E$, where $m$ is the product of 2, 3, and all non-surjective primes.\\
Apply Algorithm \ref{alg:compute_prim_degree} to $\im \rho_{E,m_0}$. For each positive divisor $n$ of $m_0$ and each closed point $x \in X_1(n)$ with $j(x)=j$, this gives the primitive point associated to $x$, say $x' \in X_1(a)$ of degree $d$. Return a multiset $D$ with entries $\langle n, (a, d) \rangle$.\\
Construct the multiset $D' \subseteq D$ containing only those elements $\langle n, (a, d)\rangle$ for which $d\leq \text{genus}(X_1(a))$.\\
Create the multiset $M$ consisting of all pairs $(a,d)$ with $\langle n, (a,d) \rangle$ appearing in $D'$. We include $(a,d) $ with multiplicity $\mu$ if and only if $X_1(a)$ has $\mu$ distinct closed points of degree $d$ associated to $E$.\\
Remove from $M$ any pair $(a_i,d_i)$ where the mod $a_i$ Galois representation of $E/\Q$ corresponds to a modular curve of genus 0.\\
\Return{$M$} 

\end{algorithm}

In particular, note that if Algorithm \ref{alg:mainalgorithm} outputs $\{\}$, then $j$ is not the image of any isolated point on $X_1(N)$, even as $N$ ranges over all positive integers. See Corollary \ref{cor:P1_isolated}.

\begin{example}
If $j=-9317$, then Algorithm \ref{alg:mainalgorithm} returns $ \{(37,6) ^3\}$. This means that any isolated point $x\in X_1(N)$ with $j(x)=j$ and $N \in \Z^+$ maps down under the natural projection map to one of the 3 closed points of degree 6 on $X_1(37)$. In fact, these points are all sporadic by \cite[Proposition 2]{frey}, since $6 < \frac{1}{2} \gon_{\Q}(X_1(37)) = 18$. Here, the gonality computation is a result of \cite{DerickxVanHoeij2014}. Thus $j$ is a sporadic --- and hence isolated --- $j$-invariant.
\end{example}

\begin{example}
If $j=-121$, then Algorithm \ref{alg:mainalgorithm} returns $\{\}$. This means that there are no isolated points on $X_1(N)$ associated to this $j$-invariant.
\end{example}

\begin{example}
If $j=-882216989/131072$, then Algorithm \ref{alg:mainalgorithm} returns  $\{( 17,4) ^2\}$. This means that $j$ is associated to an isolated point on a modular curve of the form $X_1(N)$ if and only if there exists a degree 4 isolated point $x \in X_1(17)$ with $j(x)=j$. In Section \ref{sec:remainingfilters}, we will see that no such isolated point exists, from which we can conclude that $j$ is not an isolated $j$-invariant.
\end{example}




\section{\texorpdfstring{Computing the level of the $m$-adic representation}{Computing the level of the m-adic representation}}
\label{sec:madicrep}

Given a non-CM elliptic curve $E/\Q$, we define the set
\[
S_E\coloneqq \{2,3\} \cup \{ \ell: \rho_{E,\ell^{\infty}}(\Gal_{\Q}) \neq \GL_2(\Z_{\ell}) \}.
\] Here, we include 2 and 3 in $S_E$ to allow us to apply results of \cite{BELOV} in later steps of the algorithm; see Section \ref{ssec:finiteness of PE} for details.
For $m \coloneqq \prod_{\ell \in S_E} \ell$, we want an algorithm to obtain the level of the $m$-adic Galois representation associated to $E$ from the image of the adelic representation of $E$, where the latter can be computed by Zywina's algorithm \cite{ZywinaAlgorithm}. If $\im \rho_E$ has level $N$, define $n=\prod_{\ell \in S_E} \ell^{v_{\ell}(N)}$ and let $m_0$ denote the level of $\rho_{E,m^{\infty}}$. We will show in Proposition \ref{Prop:Level} that $m_0| n$ and $n | N$. Each of these divisibilities can be proper, as the following examples illustrates.

\begin{example}
 If $E=75072.bc2$, we see that $\rho_E$ has level $N=4692=2^2 \cdot 3 \cdot 17 \cdot 23$. Since 2 is the only non-surjective prime, $n=2^2\cdot 3$. However, the level of the $6$-adic Galois representation associated to $E$ is 2. This reduction in level is due to the fact that, while there is entanglement between the 4- and 1173-torsion point fields of $E$, there is no entanglement between the 4- and 3-torsion point fields of $E$.\footnote{Specifically, $\Q(E[4]) \cap \Q(E[1173])$ is a degree 4 extension of $\Q$, while $\Q(E[4]) \cap \Q(E[3])=\Q$.} On the other hand, it can also happen that $N=m_0$. For example, if $E=54.b2$, we see that $\rho_E$ has level 72, and this is also the level of the $m$-adic Galois representation associated to $E$.
\end{example}

\begin{algorithm}[H]
  \KwIn{$G(N)\leq \GL_2(\Z/N\Z)$ where $\im\rho_E=G$ and $N$ is the level.}
  \KwOut{$m_0 \in \Z^+$, the level of $\rho_{E,m^{\infty}}$ for $m= \prod_{\ell \in S_E} \ell$.}
  \caption{Compute Reduced level}%
  \label{alg:reduce_level}
  Let $n = \prod_{\ell \in S_E} \ell^{v_{\ell}(N)}$.\\
Compute the smallest $m_0$ dividing $n$ such that \[\#G(n)=\#G(m_0) \cdot \#\ker(\GL_2(\Z/n\Z) \rightarrow \GL_2(\Z/m_0\Z)).\]\\
  \Return{$m_0$}
\end{algorithm}

The validity of this algorithm is a consequence of the following proposition.
  \begin{proposition}\label{Prop:Level}
  Let $E/\Q$ be a non-CM elliptic curve, and let $\im \rho_E=G \leq \GL_2(\widehat{\Z})$ be a subgroup of level $N$. Define $n \coloneqq \prod_{\ell \in S_E} \ell^{v_{\ell}(N)}$ and $m\coloneqq \prod_{\ell \in S_E} \ell$. If $m_0$ is the smallest positive integer dividing $n$ such that \[\#G(n)=\#G(m_0) \cdot \#\ker(\GL_2(\Z/n\Z) \rightarrow \GL_2(\Z/m_0\Z)),\] then $m_0$ is the level of the $m$-adic Galois representation associated to $E$.
  \end{proposition}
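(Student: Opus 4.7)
The plan is to let $H := \im \rho_{E,m^{\infty}} \leq \GL_2(\Z_m)$ denote the image of the $m$-adic representation, which coincides with the projection of $G$ under $\GL_2(\widehat{\Z}) \to \GL_2(\Z_m) := \prod_{\ell \in S_E} \GL_2(\Z_\ell)$. For any positive integer $k$ whose prime factors all lie in $S_E$, the reduction $\pi_k \colon \GL_2(\widehat{\Z}) \to \GL_2(\Z/k\Z)$ factors through $\GL_2(\Z_m)$, so $G(k) = H(k)$. By definition, the level of $\rho_{E,m^{\infty}}$ is the smallest such $k$ satisfying $H = \widetilde{\pi}_k^{-1}(H(k))$, where $\widetilde{\pi}_k \colon \GL_2(\Z_m) \to \GL_2(\Z/k\Z)$ is the reduction; equivalently, the smallest $k$ for which $\ker(\widetilde{\pi}_k) \subseteq H$.

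The first step is to show that the level of $H$ already divides $n$, i.e.\ that $\ker(\widetilde{\pi}_n) \subseteq H$. Given $h \in \ker(\widetilde{\pi}_n)$, I would construct $\widetilde{g} \in \GL_2(\widehat{\Z})$ whose $\ell$-adic component is $h_\ell$ for $\ell \in S_E$ and the identity for $\ell \notin S_E$. Using the factorwise decomposition $\ker(\pi_N) = \prod_\ell \ker\!\bigl(\GL_2(\Z_\ell) \to \GL_2(\Z/\ell^{v_\ell(N)}\Z)\bigr)$, one checks that $\widetilde{g} \in \ker(\pi_N)$: on each $\ell \in S_E$ this follows from $h \in \ker(\widetilde{\pi}_n)$ together with $v_\ell(n) = v_\ell(N)$, while on $\ell \notin S_E$ the component of $\widetilde{g}$ is trivial by construction. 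Since $G$ has level $N$, the inclusion $\ker(\pi_N) \subseteq G$ holds, so $\widetilde{g} \in G$, and its projection to $\GL_2(\Z_m)$ is $h$, yielding $h \in H$.

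For the second step, fix $m_0 \mid n$ and let $p_{n,m_0}\colon\GL_2(\Z/n\Z) \to \GL_2(\Z/m_0\Z)$ be the natural reduction. Since $\widetilde{\pi}_{m_0} = p_{n,m_0} \circ \widetilde{\pi}_n$ and we have already established $H = \widetilde{\pi}_n^{-1}(H(n))$, the condition $\ker(\widetilde{\pi}_{m_0}) \subseteq H$ translates into the equality $H(n) = p_{n,m_0}^{-1}(H(m_0))$ inside $\GL_2(\Z/n\Z)$. The inclusion $H(n) \subseteq p_{n,m_0}^{-1}(H(m_0))$ is automatic, and $p_{n,m_0}(H(n)) = H(m_0)$, so this equality is equivalent to the numerical condition $|H(n)| = |H(m_0)| \cdot |\ker(p_{n,m_0})|$. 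Substituting $G(n) = H(n)$ and $G(m_0) = H(m_0)$ gives precisely the condition in the proposition, and the smallest $m_0 \mid n$ satisfying it is therefore the level of $\rho_{E,m^{\infty}}$.

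The main subtlety is the first step: carefully tracking the kernels of the various projection maps and verifying that the ``mixed'' element $\widetilde{g}$ really lies in $G$ via its characterization as $\pi_N^{-1}(G(N))$. Once the level is known to divide $n$, the equivalence with the cardinality condition becomes a straightforward index computation inside the finite group $\GL_2(\Z/n\Z)$.
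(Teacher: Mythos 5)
Your proof is correct and takes essentially the same route as the paper: you first show $\im\rho_{E,m^{\infty}}$ is the full preimage of $G(n)$ by lifting an element of $\ker(\widetilde{\pi}_n)$ to an element of $\ker(\pi_N)\subseteq G$ with trivial components away from $S_E$ (the paper phrases this via the subgroup $G(N)\cap(\{I\}\times\GL_2(\Z/n'\Z))$, which is the same construction), and then convert the cardinality condition into $G(n)=p_{n,m_0}^{-1}(G(m_0))$ by an index count.
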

  \begin{proof}
  First we will show that $\im \rho_{E,m^{\infty}}=\pi_1^{-1}(G(n))$, where $\pi_1\colon \prod_{\ell \in S_E} \GL_2(\Z_{\ell}) \rightarrow \GL_2(\Z/n\Z)$ is the natural reduction map. It suffices to show that $\ker \pi_1 \subseteq \im \rho_{E,m^{\infty}}$. We write $N=n\cdot n'$ with $\gcd(n,n')=1$, and so we may identify $G(N)$ as a subgroup of $\GL_2(\Z/n\Z) \times \GL_2(\Z/n'\Z)$. Under this identification, let $H$  be the intersection of $G(N)$ with the subgroup $\{I\} \times \GL_2(\Z/n'\Z)$. Then since $G$ has level $N$,
  \[
  \pi^{-1}(H) \subseteq \im \rho_E,
  \]
  where $\pi\colon \GL_2(\widehat{\Z}) \rightarrow \GL_2(\Z/N\Z)$ is the natural reduction map. The image of this subset relation under the natural projection map  $\GL_2(\widehat{\Z}) \cong \prod_{\ell} \GL_2(\Z_{\ell}) \rightarrow  \prod_{\ell \in S_E} \GL_2(\Z_{\ell}) $ gives
  \[
  \ker(\pi_1) \subseteq \im \rho_{E,m^{\infty}},
  \]
  as desired.

Let $\pi_2\colon \GL_2(\Z/n\Z) \rightarrow \GL_2(\Z/m_0\Z)$ be the natural reduction map. The assumption on $m_0$ implies that $\ker(\pi_2) \subseteq G(n)$, so it follows that $G(n) = \pi_2^{-1}(G(m_0))$. Thus if $\pi_3\colon \prod_{\ell \in S_E} \GL_2(\Z_{\ell}) \rightarrow \GL_2(\Z/m_0\Z)$ denotes the reduction map, we have
  \begin{align*}
  \pi_3^{-1}(G(m_0))&=\pi_1^{-1}(\pi_2^{-1}(G(m_0)))\\
  &=\pi_1^{-1}(G(n))\\
  &=\im \rho_{E,m^{\infty}}.
  \end{align*}
By construction $m_0$ is the smallest positive integer with this property.
  \end{proof}

\begin{corollary}
  Algorithm \ref{alg:reduce_level} is correct.
\end{corollary}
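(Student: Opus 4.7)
The plan is essentially to observe that this corollary is a direct consequence of Proposition \ref{Prop:Level}, with only a small check that the algorithm can actually compute all the quantities it needs from the input $G(N)$.

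First I would argue that the set $S_E$, and hence the integer $n = \prod_{\ell \in S_E} \ell^{v_{\ell}(N)}$ in Step 1, is determined by $N$ alone. Since $G$ has adelic level $N$, the image $G$ contains the kernel of reduction $\GL_2(\widehat{\Z}) \to \GL_2(\Z/N\Z)$. For any prime $\ell \notin \{2,3\}$ with $\ell \nmid N$, the projection of this kernel to $\GL_2(\Z_{\ell})$ is already all of $\GL_2(\Z_{\ell})$, so $\rho_{E,\ell^{\infty}}$ is surjective and $\ell \notin S_E$. Thus $S_E = \{2,3\} \cup \{\ell \text{ prime} : \ell \mid N\}$, which can be read off directly from $N$, and $n$ is then computed from the factorization of $N$.

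Next I would check that Step 2 is a finite procedure. The groups $G(n)$ and $G(m_0)$ are obtained from $G(N)$ by reduction modulo $n$ and $m_0$, which is a finite computation; the kernel order $\#\ker(\GL_2(\Z/n\Z) \to \GL_2(\Z/m_0\Z))$ has a closed form in terms of $n$ and $m_0$; and $n$ has only finitely many divisors, so only finitely many candidates for $m_0$ need to be tested. Iterating over divisors of $n$ in any order and taking the minimum of those that satisfy the index equality therefore terminates and yields a well-defined output.

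Finally, Proposition \ref{Prop:Level} asserts that the smallest divisor $m_0$ of $n$ satisfying precisely the displayed index equality is the level of $\rho_{E,m^{\infty}}$. Since the quantity searched for by the algorithm agrees on the nose with the quantity characterized by the proposition, the returned value is correct. There is no real obstacle: the substantive content lives in Proposition \ref{Prop:Level}, and the corollary is a bookkeeping statement that the algorithm faithfully implements that characterization.
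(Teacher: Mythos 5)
Your overall route is the same as the paper's: the corollary is treated as an immediate consequence of Proposition \ref{Prop:Level}, together with the trivial observation that Step 2 is a finite search over divisors of $n$. That part is fine and matches the intended argument.

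However, your justification of Step 1 contains a genuinely false claim. You assert $S_E = \{2,3\} \cup \{\ell \text{ prime} : \ell \mid N\}$, but your argument only gives the inclusion $S_E \subseteq \{2,3\} \cup \{\ell : \ell \mid N\}$ (if $\ell \nmid N$ and $\ell \neq 2,3$ then $\rho_{E,\ell^{\infty}}$ is surjective). The reverse inclusion fails: a prime can divide the adelic level $N$ purely because of entanglement while $\rho_{E,\ell^{\infty}}$ is still surjective, so such $\ell$ is \emph{not} in $S_E$. The paper's own example in Section \ref{sec:madicrep} exhibits this: for the curve 75072.bc2 one has $N = 4692 = 2^2\cdot 3\cdot 17\cdot 23$ but $S_E = \{2,3\}$, since $17$ and $23$ are surjective and only enter $N$ through entanglement with the $4$-torsion field. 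If Algorithm \ref{alg:reduce_level} computed $n$ from your formula, it would take $n = 4692$ and return the level of the $2\cdot3\cdot17\cdot23$-adic representation (namely $4692$) rather than the level $2$ of the $6$-adic representation, so the output would not be the quantity the algorithm is specified to compute. In short, $S_E$ is not determined by $N$ alone; it must either be supplied with the input or computed from $G(N)$ by checking, for each $\ell \mid N$, whether the projection of $G(N)$ to $\GL_2(\Z/\ell^{v_{\ell}(N)}\Z)$ is all of $\GL_2(\Z/\ell^{v_{\ell}(N)}\Z)$ (which, since $G$ contains the kernel of reduction modulo $N$, is equivalent to surjectivity of $\rho_{E,\ell^{\infty}}$). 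With that correction, the rest of your argument — finiteness of the search and the appeal to Proposition \ref{Prop:Level} — is exactly the paper's reasoning.
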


\section{Primitive Points on Modular Curves}
\label{sec:PrimitivePoints}

Let $E$ be a non-CM elliptic curve. In this section, we will reduce the question of determining whether $j(E)$ is isolated to the analysis of an associated finite set of \emph{primitive points} on modular curves. The primitive points are characterized by the following theorem.

\begin{theorem}\label{Thm:PrimPts}
Let $E/\Q$ be a non-CM elliptic curve.  There exists a finite set $\mathcal{P}=\mathcal{P}(E)$ of primitive points in $\cup_{n \in \Z^+} X_1(n)$ associated to $E$ which are characterized by the following properties:
\begin{enumerate}
\item For each $N\in \Z^+$, a point $x \in X_1(N)$ with $j(x)=j(E)$ corresponds to a unique element $x' \in \mathcal{P}$ under the natural projection map. Moreover, if $x' \in X_1(a)$, then $a \mid N$ and $\deg(x)=\deg(f)\cdot \deg(x')$, where $f\colon X_1(N) \rightarrow X_1(a)$ is the natural map.
\item The rational number $j(E)$ is isolated (respectively, sporadic) if and only if there exists an isolated (respectively, sporadic) point in $\mathcal{P}$.
\end{enumerate}
Moreover, the set $\mathcal{P}$ is minimal with respect to conditions (i) and (ii).
\end{theorem}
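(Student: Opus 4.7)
The plan is to construct $\mathcal{P}(E)$ explicitly as a finite subset of $\bigsqcup_{a \mid m_0} X_1(a)$, where $m_0$ is the level of the $m$-adic Galois representation for $m = \prod_{\ell \in S_E} \ell$ (well-defined by Proposition \ref{Prop:Level}), and then verify (i), (ii), and minimality. I would call a closed point $x' \in X_1(a)$ with $j(x') = j(E)$ \emph{primitive} if $a \mid m_0$ and for every proper divisor $b$ of $a$, the image $x''$ of $x'$ under the natural map $g \colon X_1(a) \to X_1(b)$ satisfies $\deg(x') < \deg(g) \cdot \deg(x'')$ --- i.e.\ one cannot descend further while preserving the multiplicative degree factorization (which always holds as an inequality $\leq$). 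Since Serre's Open Image Theorem guarantees that $S_E$ is finite, $m_0$ is a well-defined positive integer; together with the fact that each $X_1(a)$ has only finitely many closed points above $j(E) \in X_1(1)$, this shows $\mathcal{P}$ is finite.

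For property (i), given $x \in X_1(N)$ with $j(x) = j(E)$, I would construct the primitive point beneath $x$ by successive reductions. First, for any prime $\ell \notin S_E$ dividing $N$, surjectivity of $\rho_{E,\ell^{\infty}}$ forces $\im \rho_{E,N}$ to contain the kernel of the mod-$\ell$ reduction at the $\ell$-component, so the projection $X_1(N) \to X_1(N/\ell)$ preserves the degree factorization. Iterating clears all primes outside $S_E$. Next, for $\ell \in S_E$ with $v_\ell(N) > v_\ell(m_0)$, Proposition \ref{Prop:Level} shows $\im \rho_{E,\ell^{v_\ell(N)}}$ is the full preimage of $\im \rho_{E,\ell^{v_\ell(m_0)}}$, so the same conclusion applies. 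After these canonical reductions $x$ descends to a point $y \in X_1(N_0)$ with $N_0 \mid m_0$ and $\deg(x) = \deg(f_{N,N_0}) \cdot \deg(y)$. I would then project $y$ further within divisors of $N_0$ as long as the factorization persists, reaching a unique minimal level --- the primitive point $x'$. Uniqueness of this minimum reduces to showing that the set of divisors $a \mid N$ at which the factorization $\deg(x) = \deg(f_{N,a}) \cdot \deg(\text{image})$ holds is closed under gcd, since the factorization can be rephrased as a statement about stabilizers of Galois orbits in torsion modules.

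For property (ii), the crucial observation is that if $\deg(x) = \deg(f) \cdot \deg(x')$ with $f \colon X_1(N) \to X_1(a)$ the natural projection, then $f^{*}(x') = x$ as divisors on $X_1(N)$, since $\deg(f^{*}(x')) = \deg(f) \cdot \deg(x') = \deg(x)$ and $x \in \Supp(f^{*}(x'))$ with multiplicity $\geq 1$. Consequently, any $\mathbf{P}^1$-parametrization witness $y' \in X_1(a)^{(\deg x')}(\Q)$ for $x'$ pulls back via $f^{*}$ to one for $x$ (using that $f^{*}$ is injective on effective divisors), and AV-parametrizations pull back analogously via the induced morphism on Jacobians; hence $x$ isolated forces $x'$ isolated. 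For the sporadic analogue, an infinite family of closed points on $X_1(a)$ of degree $\leq \deg(x')$ pulls back to closed points on $X_1(N)$ of degree $\leq \deg(f) \cdot \deg(x') = \deg(x)$. The reverse direction is immediate: an isolated (resp.\ sporadic) $x' \in \mathcal{P}$ directly witnesses $j(E)$ as isolated (resp.\ sporadic). Finally, minimality holds because for each $x' \in \mathcal{P} \cap X_1(a)$, taking $N = a$ and $x = x'$ in (i) shows $x'$ is its own unique primitive point, so no proper subset of $\mathcal{P}$ can satisfy (i).

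The main obstacle will be rigorously establishing the Galois-theoretic assertions in property (i): specifically, that $\im \rho_{E,N}$ has the appropriate product structure across primes inside versus outside $S_E$ (requiring attention to cyclotomic entanglement forced by determinants), and that the set of divisors at which the degree factorization holds is closed under gcds. Both facts should follow from the framework of Galois image computations in \cite{BELOV} combined with Proposition \ref{Prop:Level}.
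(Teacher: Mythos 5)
Your overall architecture matches the paper's (define primitive points as the points of level dividing $m_0$ from which one cannot descend while preserving the multiplicative degree relation; verify (i), (ii), minimality), and your treatment of (ii) is a correct re-derivation of what the paper simply cites as \cite[Theorem 4.3]{BELOV}: the identity $f^*x'=x$ when $\deg(x)=\deg(f)\deg(x')$, pullback of a linear equivalence witness, and pullback of a positive-rank abelian subvariety under $f^*$ all work. The problem is the heart of part (i). You correctly identify that uniqueness of the primitive point below $x$ reduces to showing that the set of divisors $a\mid N$ at which the factorization $\deg(x)=\deg(f_{N,a})\cdot\deg(\mathrm{image})$ holds is closed under gcd, but you then assert that this ``should follow from the framework of Galois image computations in \cite{BELOV} combined with Proposition \ref{Prop:Level}.'' It does not: this gcd-closedness is exactly the new content of the theorem (the paper notes the uniqueness statement in (i) is not in \cite{BELOV}), and the paper has to prove it from scratch as Proposition \ref{Prop:gcd}. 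The actual argument compares residue fields: if $x\in X_1(\lcm(n_1,n_2))$ maps to $x_1,x_2$, then $\Q(x)$ is at most quadratic over the compositum $\Q(x_1)\Q(x_2)$, with equality when $\gcd(n_1,n_2)>2$ or one of the levels is $2$ (Lemma \ref{lem:compositum_res_fields}, itself requiring a Weber-function and Galois-representation argument), and this is played off against the exact degree formula of Corollary \ref{Cor:DegreeFormula}, where the case $\gcd\le 2$ is saved precisely because the possible quadratic defect is matched by the factor $\tfrac12$ in the degree of $X_1(n_i)\to X_1(g)$. None of this is in your proposal, so the uniqueness claim in (i) — and hence the well-definedness of the map $x\mapsto x'$ that everything else rests on — is unproven.

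A secondary flaw: your first reduction step claims that for $\ell\notin S_E$, surjectivity of $\rho_{E,\ell^\infty}$ ``forces $\im\rho_{E,N}$ to contain the kernel of the mod-$\ell$ reduction at the $\ell$-component.'' That is false in general: even with surjective $\ell$-adic image there can be entanglement, e.g.\ $\Q(\sqrt{\pm\ell})\subseteq\Q(\zeta_\ell)\subseteq\Q(E[\ell])$ may coincide with $\Q(\sqrt{\Delta_E})\subseteq\Q(E[2])$, so the image need not contain $\{I\}\times\GL_2(\Z/\ell\Z)$. The conclusion you want (the degree factorization down to level $\gcd(n,m_0)$) is nevertheless true, but it is a theorem — \cite[Theorem~5.1]{BELOV} — and is the reason $2$ and $3$ are forced into $S_E$; the paper invokes it directly rather than arguing via a product structure of the Galois image. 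With that citation substituted and, crucially, with a genuine proof of the gcd-closedness supplied, your outline would go through.
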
 

\noindent Though in many ways this can be viewed as a refinement of results in \cite{BELOV}, the uniqueness of Theorem~\ref{Thm:PrimPts}\,(i) is new. In Section~\ref{sec:primitivedeg}, we give an algorithm for enumerating $\mathcal{P}(E)$ given a non-CM elliptic curve $E/\Q$.


\subsection{\texorpdfstring{Construction of $\mathcal{P}(E)$ and minimality.}{Construction of P(E) and minimality}} Let $E/\Q$ be a non-CM elliptic curve, and let $m\geq 1$ be an integer.
We begin by defining a directed graph $G(E,m)$ on the points of $X_1(n)$ corresponding to $E$ for all $n|m$. The vertices of $G(E,m)$ are tuples $(x,n,d)$ where:
\begin{enumerate}
    \item $n|m$,
    \item $x$ is a closed point on $X_1(n)$ of degree $d$, and
    \item $j(x)=j(E)$.
\end{enumerate}
We connect $(x,n,d)$ with a directed edge to $(x',n',d')$ if:
\begin{enumerate}
    \item $n'$ is a proper divisor of $n$,
    \item $x'=f(x)$ where $f\colon X_1(n)\to X_1(n')$ is the natural map, and
    \item $d=d'\cdot \deg f$.
\end{enumerate}

This is a directed acyclic graph. A \textbf{sink} in a directed acyclic graph is a vertex with no outgoing edges, and a \textbf{source} is a vertex with no incoming edges.

\begin{definition}\label{def:mprimitive}
    Let $E$ be a non-CM elliptic curve over $\Q$ and let $m\geq 1$ be an integer. The \textbf{$m$-primitive points} of $E$ are the sinks of $G(E,m)$. The \textbf{$m$-primitive degrees} are the tuples $(n,d)$, where $(x,n,d)$ is $m$-primitive for $E$. The union of all $m$-primitive points as $m$ ranges over all positive integers is the set of \textbf{primitive points} associated to $E$, denoted $\mathcal{P}(E)$. The union of all $m$-primitive degrees is the set of \textbf{primitive degrees} associated to $E$.
\end{definition}

By construction, for any $(x,n,d) \in \mathcal{P}(E)$, there does \emph{not} exist a proper divisor $n' \mid n$ with $d=\deg(f) \cdot \deg(f(x))$, where $f\colon X_1(n) \rightarrow X_1(n')$ is the natural map. Thus $\mathcal{P}(E)$ is the minimal set which can satisfy Theorem \ref{Thm:PrimPts}(i).

\begin{remark}\label{Remark:Transitivity}
Suppose $n'' | n' | n | m$ and that $(x,n,d), (x',n',d'), (x'',n'',d'')$ are vertices in $G(E,m)$. This graph is \textbf{transitive}, meaning that if there is an edge from $(x,n,d)$ to $(x',n',d')$ and from $(x',n',d')$ to $(x'',n'',d''),$ then
there is also an edge from $(x,n,d)$ to $(x'',n'',d'')$. Moreover, if there is an edge from $(x,n,d)$ to $(x'',n'',d''),$ then we  claim there is an edge from $(x,n,d)$ to $(x',n',d')$, where $x'$ is the image of $x$ on $X_1(n')$ and $n'$ is any multiple of $n''$ which properly divides $n$. Intuitively, if the degree grows as much as possible from level $n''$ to level $n$ then it also grows as much as possible from level $n'$ to level $n$. Indeed, let $f_1\colon X_1(n) \rightarrow X_1(n')$ and $f_2\colon X_1(n') \rightarrow X_1(n'')$. Suppose for the sake of contradiction that  $d<d'\cdot\deg f_1$. Since $d' \leq d'' \cdot \deg f_2$, this would imply $d<d'' \cdot\deg f_1\cdot \deg f_2$. This contradicts our assumption that there is an edge from $(x,n,d)$ to $(x'',n'',d'')$.
\end{remark}

\begin{definition}
Let $E/\Q$ be a non-CM elliptic curve and $m \in \Z^+$. For a fixed vertex $(x,n,d)$ in $G(E,m)$, consider the directed graph induced by $(x,n,d)$ and its \textbf{descendants}, i.e., all vertices $(x',n',d')$ reachable by a path from $(x,n,d)$. This is a directed acyclic graph with a single source, $(x,n,d)$.  
\end{definition}

In Corollary \ref{Cor:SingleSink} below, we will show the induced graph on the descendants of $(x,n,d)$ has a single sink as well; this is $x' \in \mathcal{P(E)}$ associated to $x$, as in Theorem \ref{Thm:PrimPts}\,(i).

\subsection{\texorpdfstring{Finiteness of $\mathcal{P}(E)$}{Finiteness of P(E)}} \label{ssec:finiteness of PE} Let $E/\Q$ be a non-CM elliptic curve, and let $m$ be the product of 2, 3, and any primes $\ell$ such that the mod $\ell$ Galois representation of $E$ is non-surjective. By Serre's Open Image Theorem \cite{serre72}, it follows that $m$ is a finite product, and there exists $m_0 \in \Z^+$ which is the level of the $m$-adic Galois representation of $E$. Suppose $v=(x,n,d) \in \mathcal{P}(E)$. Then $v$ is a sink of $G(E,N)$ for some $N \in \Z^+$. By \cite[Theorem~5.1]{BELOV}, we have 
\[
\deg(x)=\deg(f) \cdot \deg(f(x)),
\]
where $f\colon X_1(n) \rightarrow X_1(\gcd(n,m_0))$ is the natural map. 
In particular, we are using the fact that $\{2,3\} \subseteq S_E$ here.
If $\gcd(n,m_0)$ properly divides $n$, this contradicts the fact that $v$ is a sink. Thus $\gcd(n,m_0)=n$ and $n \mid m_0$. In particular, $v$ is a sink of $G(E,m_0)$. It follows that $\mathcal{P}(E)$ is the set of sinks of $G(E,m_0)$, and $\mathcal{P}(E)$ is finite since $m_0$ is a fixed positive integer depending only on $E$. We note in particular that the level of any primitive point will divide $m_0$, which in turn divides the level of the adelic Galois representation associated to $E$ (see $\S \ref{sec:madicrep}$). We record this observation in the following proposition.

\begin{proposition}\label{Rmk:Level}
  Let $E/\Q$ be a non-CM elliptic curve, and let $\mathcal{P}(E)$ denote the set of associated primitive points in $\cup_{n \in \Z^+} X_1(n)$. If $x\in X_1(a)$ is in $\mathcal{P}(E)$, then $a$ divides $m_0$.
\end{proposition}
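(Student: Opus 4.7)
The plan is essentially to reread the paragraph immediately preceding the proposition and extract the divisibility statement as a standalone corollary of the construction.

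First I would recall the key structural fact established just above: $\mathcal{P}(E)$ coincides with the set of sinks of the graph $G(E,m_0)$. So given any primitive point $(x,a,d) \in \mathcal{P}(E)$, I know $x$ sits at level $a$ and has no outgoing edges in $G(E,m_0)$. The goal is to show $a \mid m_0$, so I would argue by contradiction and suppose $a \nmid m_0$. Equivalently, $a' \coloneqq \gcd(a,m_0)$ is a proper divisor of $a$.

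Next I would invoke \cite[Theorem~5.1]{BELOV}, which applies because $\{2,3\} \subseteq S_E$ by construction of $m$, and therefore $m_0$ is divisible by the appropriate small primes needed to make that theorem applicable. That result tells me precisely that for the natural map $f \colon X_1(a) \rightarrow X_1(a')$, we have
\[
\deg(x) = \deg(f) \cdot \deg(f(x)).
\]
In other words, going from $X_1(a)$ down to $X_1(a')$ the degree drops by exactly the full factor $\deg(f)$, so setting $d' \coloneqq \deg(f(x))$ gives $d = d' \cdot \deg(f)$.

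At this point the contradiction writes itself: the tuple $(f(x), a', d')$ is a vertex of $G(E,m_0)$ (since $a' \mid m_0$ and $j(f(x)) = j(E)$), and by the definition of edges in $G(E,m)$ the three conditions $a' \mid a$ strictly, $f(x)$ being the image under the natural map, and $d = d' \cdot \deg f$ produce a directed edge from $(x,a,d)$ to $(f(x),a',d')$. This contradicts $(x,a,d)$ being a sink. Hence $a' = a$ and $a \mid m_0$. I do not foresee a real obstacle here; the only thing to be careful about is making sure the hypothesis of \cite[Theorem~5.1]{BELOV} is cited in the form that uses the inclusion $\{2,3\}\subseteq S_E$ (which is exactly why those primes were built into $m$ at the outset).
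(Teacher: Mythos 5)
Your core computation is the same as the paper's---apply \cite[Theorem~5.1]{BELOV} to the map $f\colon X_1(a)\rightarrow X_1(\gcd(a,m_0))$ (which is indeed where $\{2,3\}\subseteq S_E$ enters) and turn the resulting degree identity into an outgoing edge that contradicts sinkhood---but the way you frame the contradiction has a genuine logical flaw. You begin by ``recalling'' that $\mathcal{P}(E)$ is the set of sinks of $G(E,m_0)$. That is not part of the construction: by Definition~\ref{def:mprimitive}, $\mathcal{P}(E)$ is the union of the sinks of $G(E,m)$ over \emph{all} $m$, and the identification of $\mathcal{P}(E)$ with the sinks of $G(E,m_0)$ is precisely what the paragraph preceding the proposition establishes, using the very divisibility you are asked to prove. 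Taking it as given is circular; in fact, once you assume $(x,a,d)$ is a vertex of $G(E,m_0)$ you have already assumed $a\mid m_0$, since vertices of $G(E,m_0)$ have level dividing $m_0$. This also makes your contradiction ill-formed: under the hypothesis $a\nmid m_0$ the tuple $(x,a,d)$ is not a vertex of $G(E,m_0)$ at all, so there is no such thing as an outgoing edge from it in that graph.

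The repair is small and turns your argument into the paper's. From the definition of $\mathcal{P}(E)$ you know only that $(x,a,d)$ is a sink of $G(E,N)$ for \emph{some} $N$, with $a\mid N$. Now apply \cite[Theorem~5.1]{BELOV} exactly as you did: with $a'=\gcd(a,m_0)$ and $f\colon X_1(a)\rightarrow X_1(a')$ one gets $\deg(x)=\deg(f)\cdot\deg(f(x))$. If $a'$ were a proper divisor of $a$, then since $a'\mid a\mid N$ the tuple $(f(x),a',\deg(f(x)))$ is a vertex of $G(E,N)$, and the degree identity produces an edge from $(x,a,d)$ to it in $G(E,N)$, contradicting that $(x,a,d)$ is a sink there. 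Hence $a=\gcd(a,m_0)$, i.e.\ $a\mid m_0$, and only after this conclusion does one learn that every primitive point is in fact a sink of $G(E,m_0)$.
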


\subsection{Preliminary Results} In this section, we establish two results concerning the residue fields of points on modular curves.

\begin{lemma}\label{lem:LCM}
Let $n_1,n_2 \in \Z^+$ and $n = \lcm(n_1,n_2)$. For $g=\gcd(n_1,n_2)$, we define $n_1'\coloneqq n_1/g$ and $n_2'\coloneqq n_2/g$.
Suppose $E/F$ is an elliptic curve, and $P \in E(\overline{F})$ is a point of order $n$. If $n_2'P\in E(F)$ and $n_1'P \in E(F)$, then $P \in E(F)$.
\end{lemma}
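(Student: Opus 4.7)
The plan is to use Bézout's identity on the integers $n_1'$ and $n_2'$, which are coprime by construction. Since $n = \lcm(n_1,n_2) = n_1' \cdot n_2' \cdot g$ and $g = \gcd(n_1,n_2)$, dividing out the common factor gives $\gcd(n_1', n_2') = 1$, so there exist integers $a, b \in \Z$ with
\[
a n_1' + b n_2' = 1.
\]

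First I would observe that the hypotheses give us two points in $E(F)$, namely $Q_1 \coloneqq n_1' P$ and $Q_2 \coloneqq n_2' P$. Since $E(F)$ is a subgroup of $E(\overline{F})$, any $\Z$-linear combination $a Q_2 + b Q_1 = a n_2' P + b n_1' P = (a n_2' + b n_1') P$ also lies in $E(F)$. Choosing $a, b$ as above from Bézout's identity yields $P = 1 \cdot P = (a n_2' + b n_1') P \in E(F)$, which is exactly the conclusion.

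Strictly speaking, one does not even need the order of $P$ to be exactly $n$ for the argument to go through; the hypothesis is only used implicitly to ensure that $n_1' P$ and $n_2' P$ are the correct multiples. The content of the lemma is really just the elementary fact that the subgroup of $E(\overline{F})$ generated by $n_1' P$ and $n_2' P$ contains $P$ whenever $\gcd(n_1', n_2') = 1$. There is no genuine obstacle here — the proof is a one-line application of Bézout, and the only thing to verify carefully is the bookkeeping identifying $n_1'$ and $n_2'$ as coprime, which follows immediately from the definition of $\gcd$.
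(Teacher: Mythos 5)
Your proof is correct, and it takes a slightly different (and in fact more economical) route than the paper. The paper also works inside the subgroup $\langle n_2'P, n_1'P\rangle \subseteq E(F)$, but it argues via orders: $n_2'P$ has order $n_1$ and $n_1'P$ has order $n_2$, so this subgroup contains an element $Q$ of order $\lcm(n_1,n_2)=n$; since the subgroup lies in the cyclic group $\langle P\rangle$ of order $n$, such a $Q$ must generate $\langle P\rangle$, whence $P\in\langle Q\rangle\subseteq E(F)$. This uses the hypothesis that $P$ has order exactly $n$. Your Bézout argument, by contrast, only uses $\gcd(n_1',n_2')=1$: writing $1$ as an integer combination of $n_1'$ and $n_2'$ expresses $P$ directly as a $\Z$-linear combination of $n_1'P$ and $n_2'P$, so the order hypothesis is indeed superfluous, exactly as you observe — your statement is marginally more general, while the paper's version makes the role of the cyclic group $\langle P\rangle$ explicit. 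One cosmetic slip: you state Bézout as $an_1'+bn_2'=1$ but then apply it in the form $(an_2'+bn_1')P=P$; the coefficients have been swapped relative to your own identity. Since Bézout is symmetric in the two coprime integers this is harmless (just interchange the roles of $a$ and $b$), but you should make the indexing consistent.
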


\begin{proof}
Note $n_2'P \in E(F)$ is a point of order $n_1$, and $n_1'P \in E(F)$ is a point of order $n_2$.  Thus there is an element $Q$ of order $n$ in
\[
\langle n_2'P, n_1' P \rangle \subseteq E(F).
\] Since $\langle n_2'P, n_1' P \rangle \subseteq \langle P \rangle$, it follows that the $F$-rational point $Q$ is a generator of $\langle P \rangle$. In particular, $P\in \langle Q \rangle$, and so $P\in E(F)$, as desired.
\end{proof}

\begin{lemma} \label{lem:compositum_res_fields}
Let $n_1,n_2 \in \Z^+$ and $n = \lcm(n_1,n_2)$. For $g=\gcd(n_1,n_2)$, we define $n_1'\coloneqq n_1/g$ and $n_2'\coloneqq n_2/g$. Let $x=[E,P] \in X_1(n)$ for an elliptic curve $E$ with $j(E) \neq 0,1728$, and define $x_1=[E,n_2'P] \in X_1(n_1)$ and $x_2=[E,n_1'P] \in X_1(n_2)$. The residue field $\Q(x)$ is at most a quadratic extension of the compositum $\Q(x_1)\Q(x_2)$. Moreover:
\begin{enumerate}
\item If $g>2$, then $\Q(x_1)\Q(x_2)=\Q(x)$.
\item If $n_1=2$ or if $n_2=2$, then $\Q(x_1)\Q(x_2)=\Q(x)$.
\end{enumerate}
\end{lemma}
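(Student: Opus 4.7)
The plan is to translate residue fields into coordinates of torsion points via the Weber function description in Lemma~\ref{ResidueFieldLemma} and Remark~\ref{ResidueFieldRmk}, then deduce everything from a careful Galois analysis over a biquadratic extension of $F \coloneqq \Q(x_1)\Q(x_2)$.

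First I would fix a Weierstrass model $E\colon y^2 = 4x^3 - c_2 x - c_3$ over $\Q(j(E))$, so that by Remark~\ref{ResidueFieldRmk} we have $\Q(x) = \Q(j(E), x_P)$, $\Q(x_i) = \Q(j(E), x_{P_i})$, where $x_P, x_{P_i}$ denote $x$-coordinates. In particular $x_{P_1}, x_{P_2} \in F$, so each $P_i$ lies in $E(F(\sqrt{d_i}))$ for some class $d_i \in F^{\times}/(F^{\times})^2$. Since $\gcd(n_1', n_2') = 1$, Bezout gives integers $a, b$ with $a n_1' + b n_2' = 1$, and then $P = aP_2 + bP_1$.

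For the general bound, I would work in $M \coloneqq F(\sqrt{d_1}, \sqrt{d_2})$. The element $\sigma \in \Gal(M/F)$ negating both $\sqrt{d_1}$ and $\sqrt{d_2}$ sends $P_i \mapsto -P_i$, hence $\sigma(P) = -P$ and $\sigma(x_P) = x_P$. Thus $x_P$ lies in the fixed field of $\sigma$, which is $F(\sqrt{d_1 d_2})$, proving $[\Q(x):F] \leq 2$. For (i), the key input is the common point $Q \coloneqq n_1' n_2' P$ of order $g$, which satisfies $Q = n_1' P_1 = n_2' P_2$ and so lies in $\langle P_1\rangle \cap \langle P_2\rangle$. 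Writing $\sigma(P_i) = \epsilon_i P_i$ with $\epsilon_i \in \{\pm 1\}$ for any $\sigma \in \Gal(M/F)$, the two expressions for $Q$ give $\epsilon_1 Q = \sigma(Q) = \epsilon_2 Q$; if $\epsilon_1 \neq \epsilon_2$ this forces $2Q = O$, hence $g \leq 2$, contradicting $g > 2$. So $\epsilon_1 = \epsilon_2$, and $\sigma$ acts as $\pm 1$ uniformly on $P$, fixing $x_P$; since this holds for every $\sigma$, we get $x_P \in F$, so $\Q(x) = F$.

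For (ii), say $n_1 = 2$ (the other case is symmetric). Then $P_1$ is $2$-torsion with $F$-rational $x$-coordinate and $y$-coordinate $0$, so $P_1 \in E(F)$ automatically; that is, $d_1 = 1$ and $P \in E(F(\sqrt{d_2}))$. The nontrivial $\tau \in \Gal(F(\sqrt{d_2})/F)$ sends $P_2 \mapsto -P_2$ and fixes $P_1$, so
\[
\tau(P) = -aP_2 + bP_1 = -(aP_2 + bP_1) + 2bP_1 = -P,
\]
using $2P_1 = O$. Hence $\tau(x_P) = x_P$, so $x_P \in F$ and $\Q(x) = F$. The main obstacle is the sign bookkeeping for the two independent Galois involutions on $\sqrt{d_1}, \sqrt{d_2}$; the decisive trick is recognizing that the order-$g$ point $Q$ common to $\langle P_1\rangle$ and $\langle P_2\rangle$ couples the two signs whenever $g > 2$, while part~(ii) is handled by the automatic $F$-rationality of a $2$-torsion point with $F$-rational $x$-coordinate.
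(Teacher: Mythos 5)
Your proof is correct, and while it shares the paper's overall strategy --- identify residue fields with $x$-coordinates via the Weber function (valid here since $j\neq 0,1728$) and analyze how Galois over the compositum $F=\Q(x_1)\Q(x_2)$ acts by $\pm 1$ on the two points --- the execution is genuinely different and more self-contained. The paper first passes to a twist $E'/F_1$ (via Deligne--Rapoport VI.3.2) so that $n_2'P$ becomes rational, invokes Lemma~\ref{lem:LCM} to conclude $\varphi(P)\in E'(L)$ for an at-most-quadratic $L/F_1F_2$, and then handles the $g>2$ case by a mod-$n$ Galois representation argument: choosing a basis containing $\varphi(P)$, a hypothetical quadratic extension produces $\sigma$ with $\rho_{E',n}(\sigma)\varphi(P)=\alpha\varphi(P)+\beta Q$ where $\alpha\equiv 1\pmod{n_1}$ and $\alpha\equiv -1\pmod{n_2}$, forcing $g\mid 2$. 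You avoid both the twisting step and Lemma~\ref{lem:LCM} by writing $P=aP_2+bP_1$ directly from B\'ezout, and you replace the representation-theoretic congruence with the cleaner observation that the order-$g$ point $Q=n_1'n_2'P$ lies in $\langle P_1\rangle\cap\langle P_2\rangle$, so any mismatch of signs $\epsilon_1\neq\epsilon_2$ forces $2Q=O$ and hence $g\leq 2$ --- the same conclusion the paper reaches via $\alpha\pmod{n_1}$ versus $\alpha\pmod{n_2}$. Two small points worth noting (neither a real gap): the inclusion $F\subseteq\Q(x)$ should be stated (it follows since $x_{P_i}$ is a rational function of $x_P$ over $\Q(j)$, or since $x_i$ is the image of $x$ under a $\Q$-rational map), and in the general bound the ``element negating both $\sqrt{d_1}$ and $\sqrt{d_2}$'' only exists when both classes are nontrivial; in the degenerate cases where some $P_i$ is already $F$-rational or $d_1,d_2$ agree, one has $P\in E(M)$ with $[M:F]\leq 2$ and the bound $[\Q(x):F]\leq 2$ is immediate, so the argument goes through after this trivial case split.
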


\begin{proof}
Since $F_1=\Q(x_1)$ and $F_2=\Q(x_2)$ are both subfields of $F=\Q(x)$, it follows that $F_1F_2$ is as well. We will show the degree of $F/F_1F_2$ is at most 2. If $n_1=2$ or $n_2=2$, we may assume without loss of generality that $n_2=2$. Fix a Weierstrass equation for $E/\Q(j(E))$ so we may identify $F_1=\Q(j(E),\mathfrak{h}(n_2'P))$, $F_2=\Q(j(E),\mathfrak{h}(n_1'P))$, and $F=\Q(j(E),\mathfrak{h}(P))$ by Lemma \ref{ResidueFieldLemma}.

There exists $E'/F_1$ such that $\varphi \colon E \rightarrow E'$ is an isomorphism and $\varphi(n_2'P) \in E'(F_1)$; see, for example, \cite[p.~274, Proposition~VI.3.2]{DR}. Moreover, we have 
\[F_2= \Q(j(E),\mathfrak{h}(n_1'P))=\Q(j(E),\mathfrak{h}(\varphi(n_1'P)))\] by Remark \ref{ResidueFieldRmk}, and the same remark shows that the $x$-coordinate of $\varphi(n_1'P)$ is rational over $F_1F_2$. The $y$-coordinate of $\varphi(n_1'P)$ is defined over at worst a quadratic extension $L/F_1F_2$, and $L=F_1F_2$ if $n_2=2$. Then $\varphi(P) \in E'(L)$ by Lemma \ref{lem:LCM}. Since $\mathfrak{h}(\varphi(P))=\mathfrak{h}(P)$, it follows that $F \subseteq L$.

Suppose that $F/F_1F_2$ is a quadratic extension. In particular, this means $n_1 \neq 2$ and $n_2 \neq 2$. Then consider $E'/F_1F_2$, and let $\{\varphi(P),Q\}$ be a basis for $E'[n]$. Recall that if $\rho_{E',n}(\sigma)=M \in \GL_2(\Z/n\Z)$ with respect to this basis, then $M \pmod{n_1}$ gives $\rho_{E',n_1}(\sigma)$ with respect to the basis $\{n_2'\varphi(P),n_2'Q\}$. Similarly, $M\pmod{n_2}$ gives $\rho_{E',n_2}(\sigma)$ with respect to the basis $\{n_1'\varphi(P),n_1'Q\}$.  Since $n_2'(\varphi(P))$ is $F_1F_2$-rational and only the $x$-coordinate of $n_1'\varphi(P)$ is defined over $F_1F_2$, there is $\sigma\in \Gal_{F_1F_2}$ such that $\sigma({n_1'}\varphi(P))={-n_1'}\varphi(P)$ and $\sigma(n_2'(\varphi(P))=n_2'\varphi(P)$. Thus $\sigma(\varphi(P))=\alpha \varphi(P)+\beta Q$ where $\alpha \equiv 1 \pmod{n_1}$ and $\alpha \equiv -1 \pmod{n_2}$. Therefore $g$ divides $2=(\alpha+1)-(\alpha-1)$, so $g\leq 2$.
\end{proof}

\subsection{\texorpdfstring{Proof of Theorem \ref{Thm:PrimPts}\,(i)}{Proof of Theorem 24(i)}}
Theorem \ref{Thm:PrimPts}\,(i) is a consequence of a corollary to the following result.

\begin{proposition}\label{Prop:gcd}
Let $m\geq 1$ be an integer and let $E/\Q$ be a non-CM elliptic curve. In the graph $G(E,m)$,
suppose $(x,n,d)$ is connected by a path to both $(x_1,n_1,d_1)$ and $(x_2,n_2,d_2)$, where $n=\lcm(n_1,n_2)$. Then if $\gcd(n_1,n_2) \neq n_1,n_2$, it follows that both $(x_1,n_1,d_1)$ and $(x_2,n_2,d_2)$ connect to $(x_3, \gcd(n_1,n_2),d_3)$, where $x_3$ is the image of $x$ on $X_1(\gcd(n_1,n_2))$ under the natural projection map.

\end{proposition}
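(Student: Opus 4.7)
The plan is first to reduce the statement to showing that a single direct edge $(x,n,d)\to(x_3,g,d_3)$ exists in $G(E,m)$ (with $g=\gcd(n_1,n_2)$), and then to establish that edge by pitting Lemma~\ref{lem:compositum_res_fields} against the degree computations of Corollary~\ref{Cor:DegreeFormula}.

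\textbf{Reduction.} By Remark~\ref{Remark:Transitivity} (transitivity), the hypothesized paths yield direct edges $(x,n,d)\to(x_i,n_i,d_i)$, so $d=d_i\deg f_i$ where $f_i\colon X_1(n)\to X_1(n_i)$. If I also obtain the edge $(x,n,d)\to(x_3,g,d_3)$, i.e.\ $d=d_3\deg f$ with $f\colon X_1(n)\to X_1(g)$, then factoring $\deg f=\deg g_i\cdot\deg f_i$ (with $g_i\colon X_1(n_i)\to X_1(g)$) and comparing gives $d_i=d_3\deg g_i$, which is exactly the edge condition $(x_i,n_i,d_i)\to(x_3,g,d_3)$.

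\textbf{Main inequality.} Set $K=\Q(x_3)$, $a_i=[\Q(x_i):K]$, and $\epsilon=[\Q(x):\Q(x_1)\Q(x_2)]$. The edge hypothesis rewrites as $d=\deg f_i\cdot a_i\cdot d_3$. The general compositum bound $[\Q(x_1)\Q(x_2):K]\leq a_1a_2$, together with Lemma~\ref{lem:compositum_res_fields}, yields
\[
d \;=\; \epsilon\cdot[\Q(x_1)\Q(x_2):K]\cdot d_3 \;\leq\; \epsilon\cdot a_1a_2\cdot d_3.
\]
Combining with the hypothesis gives $\deg f_i\leq \epsilon\cdot a_{3-i}\leq \epsilon\cdot \deg g_{3-i}$, where the last step is the standard local-degree bound $a_j\leq\deg g_j$. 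The target $d=d_3\deg f$ is equivalent to $a_j=\deg g_j$ for both $j$, so the task is to force equalities throughout.

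\textbf{Computing the ratio $r$.} The key is the ratio $r\coloneqq \deg f_1/\deg g_2=\deg f_2/\deg g_1$. Writing $n_i'=n_i/g$ and applying Corollary~\ref{Cor:DegreeFormula}, the product factors $\prod_{p\mid n_2',\,p\nmid n_1}(1-1/p^2)$ and $\prod_{p\mid n_2',\,p\nmid g}(1-1/p^2)$ agree (since $p\mid n_2'$ forces $p\mid n_2$, whence $p\mid n_1\Leftrightarrow p\mid g$), so $r$ equals the ratio of the constants $c_{f_1}/c_{g_2}$. A case analysis on $(g,n_1,n_2)$, restricted by $g\neq n_1,n_2$, shows $r=1$ precisely when $g\geq 3$ or when $g=1$ with $n_1=2$ or $n_2=2$---exactly the cases where Lemma~\ref{lem:compositum_res_fields} forces $\epsilon=1$---and $r=2$ precisely when $g=2$ or when $g=1$ with coprime $n_1,n_2\geq 3$.

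\textbf{Closing the cases.} When $r=1$, Lemma~\ref{lem:compositum_res_fields} gives $\epsilon=1$, so $\deg f_i\leq \epsilon a_{3-i}$ becomes $\deg g_{3-i}\leq a_{3-i}$, which together with $a_{3-i}\leq \deg g_{3-i}$ forces equality. When $r=2$, the inequality $2\deg g_{3-i}=\deg f_i\leq \epsilon\deg g_{3-i}$ forces $\epsilon\geq 2$, which with $\epsilon\leq 2$ from Lemma~\ref{lem:compositum_res_fields} gives $\epsilon=2$; then $2\deg g_{3-i}\leq 2a_{3-i}$ again forces $a_{3-i}=\deg g_{3-i}$. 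Either way, $a_j=\deg g_j$ for $j=1,2$, so $d=\deg f_i\cdot\deg g_i\cdot d_3=\deg f\cdot d_3$, and the desired edge exists. The main obstacle is the bookkeeping in the case analysis for $r$ and verifying its perfect alignment with the two cases of Lemma~\ref{lem:compositum_res_fields}; once that alignment is observed, the inequality chain closes mechanically.
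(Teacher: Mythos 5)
Your proposal is correct and follows essentially the same route as the paper: the same three ingredients (Lemma \ref{lem:compositum_res_fields}, the degree formula of Corollary \ref{Cor:DegreeFormula} with its $c_f\in\{1,\tfrac12\}$ bookkeeping, and the local-degree/compositum bounds) are squeezed against each other, and your dichotomy $r\in\{1,2\}$ aligned with $\epsilon=1$ versus $\epsilon\le 2$ is precisely the paper's case (i)/(ii) split. The only cosmetic difference is that you first reduce to the single edge $d=d_3\deg f$ and introduce the ratio $r$ explicitly, whereas the paper proves the two edges $d_i=d_3\deg g_i$ directly.
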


        \begin{figure}[h]
        \begin{center}
        \begin{tikzpicture}[node distance=1.5cm]

	\node (n){$\Q(x)$};
        \node (comp)[below of=n]{$\Q(x_1)\Q(x_2)$};
        \node (n1)[below left of=comp, node distance=2.2cm]{$\Q(x_1)$};
        \node (n2)[below right of=comp, node distance=2.2cm]{$\Q(x_2)$};
        \node (gcd)[below of=comp, node distance=3.2cm]{$\Q(x_3)$};
        \node (Q)[below of=gcd]{$\Q$};
        \draw[-] (comp) edge node [left]{$\deg(f_1)$ or $\deg(f_1)/2$}(n1);
        \draw[-] (comp) edge node [right]{$\deg(f_2)$ or $\deg(f_2)/2$}(n2);
        \draw[-] (n1) edge node [left]{$d_1/d_3$}(gcd);
        \draw[-] (n2) edge node [right]{$d_2/d_3$}(gcd);
        \draw[-] (n) edge node [left]{1 or 2}(comp);
        \draw[-] (gcd) edge node [left]{$d_3$}(Q);
        \end{tikzpicture}
        \end{center}
        \caption{Degrees of Residue Fields}
    \end{figure}
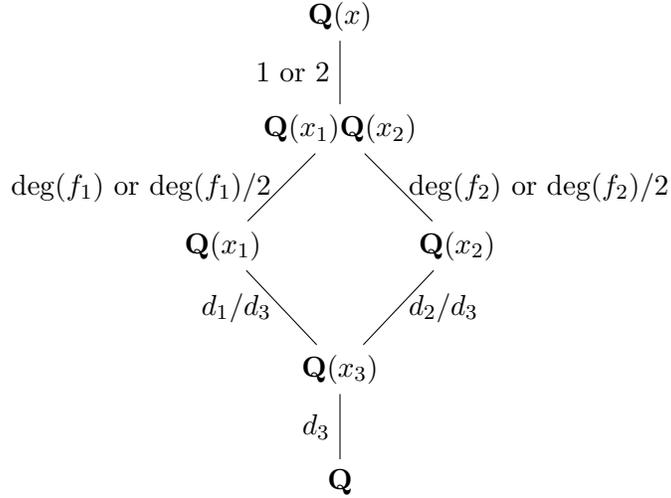

    \begin{proof}  By assumption, the integer $g=\gcd(n_1,n_2)$ is a proper divisor of both $n_1=gn_1'$ and $n_2=gn_2'$. Let $f_1 \colon X_1(n) \rightarrow X_1(n_1)$ and let $f_2 \colon X_1(n) \rightarrow X_1(n_2)$. We consider two cases.
    \begin{enumerate}
    \item Suppose $ g>2$, or if $g=1$, that there exists $n_i \leq 2$. In the latter case, our assumptions imply that exactly one of $n_1$ or $n_2$ is equal to 2 and the other is greater than 2. So if $g=1$, without loss of generality we may assume $n_1>2$ and $n_2=2$. In either case, we have $\Q(x_1)\Q(x_2)=\Q(x)$ by Lemma \ref{lem:compositum_res_fields}, and so by assumption
    \begin{align*}
    [\Q(x_1)\Q(x_2):\Q(x_1)]=\deg(f_1),\\
    [\Q(x_1)\Q(x_2):\Q(x_2)]=\deg(f_2).
    \end{align*}
        It follows from properties of composite fields that $\deg(f_1) \leq \frac{d_2}{d_3}$ and $\deg(f_2) \leq \frac{d_1}{d_3}$; see Figure 1. Note that for a prime $p \mid n_1'$, we have $p \nmid g$ if and only if $p \nmid n_2$. Similarly, for a prime $p \mid n_2'$, we have $p \nmid g$ if and only if $p \nmid n_1$. Thus by Corollary \ref{Cor:DegreeFormula}, we have
    \begin{align*}
    \frac{d_1}{d_3} \leq \deg(X_1(n_1) \rightarrow X_1(g))=\deg(f_2),\\
        \frac{d_2}{d_3} \leq \deg(X_1(n_2) \rightarrow X_1(g))=\deg(f_1).
    \end{align*}
    Thus  $d_1=d_3\cdot \deg(X_1(n_1) \rightarrow X_1(g))$ and $d_2=d_3 \cdot \deg(X_1(n_2) \rightarrow X_1(g))$, so the conclusion holds.
    \item Suppose $g=2$ or, if $g=1$, that $n_1,n_2>2$. It follows from Lemma \ref{lem:compositum_res_fields} and the same argument as above that $\frac{\deg(f_2)}{2} \leq \frac{d_1}{d_3}$ and $\frac{\deg(f_1)}{2} \leq \frac{d_2}{d_3}$; see Figure 1. However, since $n_1,n_2>2$ by assumption, we have
    \begin{align*}
    \frac{d_1}{d_3} \leq \deg(X_1(n_1) \rightarrow X_1(g))=\frac{1}{2}(\deg(f_2)),\\
       \frac{ d_2}{d_3} \leq \deg(X_1(n_2) \rightarrow X_1(g))=\frac{1}{2}(\deg(f_1)).
    \end{align*}
   Thus $d_1=d_3 \cdot \deg(X_1(n_1) \rightarrow X_1(g))$ and $d_2=d_3 \cdot \deg(X_1(n_2) \rightarrow X_1(g))$, so the conclusion follows. \qedhere
      \end{enumerate}
    \end{proof}

\begin{corollary}\label{Cor:SingleSink}
   Let $m\geq 1$ be an integer and let $E/\Q$ be a non-CM elliptic curve. For any fixed vertex $v=(x,n,d)$ in $G(E,m)$, the induced subgraph on its descendants has a single sink.
\end{corollary}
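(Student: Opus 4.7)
The plan is to assume for contradiction that the induced subgraph on descendants of $v=(x,n,d)$ contains two distinct sinks $(x_1,n_1,d_1)$ and $(x_2,n_2,d_2)$ and derive a contradiction, ultimately by invoking Proposition \ref{Prop:gcd}. Since the induced subgraph is finite and acyclic, at least one sink exists, so uniqueness is the real content.

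First I would observe that $n_1 \neq n_2$: each $x_i$ is the image of $x$ under the natural map $X_1(n)\to X_1(n_i)$, and the edge relation $d = d_i \cdot \deg(X_1(n) \to X_1(n_i))$ forces the degrees $d_i$ to agree as soon as the levels do, making the vertices coincide. Next I would eliminate the case in which one level divides the other, say $n_1 \mid n_2$ with $n_1 \neq n_2$. Using $d_i = d/\deg(X_1(n) \to X_1(n_i))$ together with the multiplicativity of degrees along the tower $n_1 \mid n_2 \mid n$, one computes $d_2 = d_1 \cdot \deg(X_1(n_2)\to X_1(n_1))$; combined with the fact that $x_1$ is the image of $x_2$ on $X_1(n_1)$, this exhibits a directed edge $(x_2,n_2,d_2) \to (x_1,n_1,d_1)$, contradicting that $(x_2,n_2,d_2)$ is a sink.

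This reduces the problem to the essential case $g \coloneqq \gcd(n_1,n_2) \notin \{n_1,n_2\}$. Set $\ell \coloneqq \lcm(n_1,n_2)$ and let $y$ denote the image of $x$ on $X_1(\ell)$. If $\ell = n$ I take $(y,\ell,e)=v$; otherwise, since $n_i \mid \ell$ and $\ell$ properly divides $n$ while $v$ connects to $(x_i,n_i,d_i)$, Remark \ref{Remark:Transitivity} supplies an edge $v \to (y,\ell,e)$. A degree computation identical to the one in the previous paragraph (applied along the divisibility $n_i \mid \ell$) then shows that $(y,\ell,e)$ in turn connects to each of the two sinks. Because $\ell = \lcm(n_1,n_2)$ and $g \neq n_1,n_2$, Proposition \ref{Prop:gcd} now produces edges from both $(x_1,n_1,d_1)$ and $(x_2,n_2,d_2)$ to a common vertex $(x_3,g,d_3)$, contradicting the assumption that either is a sink.

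The main obstacle is a book-keeping one rather than a conceptual one: Proposition \ref{Prop:gcd} has the hypothesis that the source vertex sits at level exactly $\lcm(n_1,n_2)$, whereas $v$ a priori lives at the possibly larger level $n$. The role of Remark \ref{Remark:Transitivity} in the argument above is precisely to manufacture an intermediate vertex $(y,\ell,e)$ at the correct level so that Proposition \ref{Prop:gcd} applies verbatim. Once this intermediate vertex is in place, the rest of the argument is mechanical.
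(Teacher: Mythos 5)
Your proof is correct and follows essentially the same route as the paper: Proposition \ref{Prop:gcd} rules out the case where neither level divides the other, and the degree-multiplicativity argument of Remark \ref{Remark:Transitivity} rules out one level properly dividing the other, forcing $n_1=n_2$ and hence equality of the sinks. You are in fact slightly more careful than the paper, whose proof invokes Proposition \ref{Prop:gcd} with source $v$ even though the proposition's hypothesis requires the source to sit at level exactly $\lcm(n_1,n_2)$; your intermediate vertex $(y,\ell,e)$ supplies precisely that missing reduction.
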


\begin{proof}
Suppose the induced subgraph has two sinks, $v_1=(x_1,n_1,d_1)$ and $v_2=(x_2,n_2,d_2)$. We will show that $n_1=n_2$, which implies $v_1=v_2$ since both $v_1$ and $v_2$ are descendants of the same vertex $(x,n,d)$. Let $g=\gcd(n_1,n_2)$. Since $v_1,v_2$ are sinks, we must have $g=n_1$ or $g=n_2$, by Proposition~\ref{Prop:gcd}, so let $g=n_1|n_2$. Remark~\ref{Remark:Transitivity} implies $n_1$ does not properly divide $n_2$ since $v_2$ is a sink. Therefore $n_1=n_2$ and $v_1=v_2$. 
\end{proof}

\subsection{\texorpdfstring{Proof of Theorem \ref{Thm:PrimPts}\,(ii)}{Proof of Theorem 24(ii)}} Let $E/\Q$ be a non-CM elliptic curve. If there exists an isolated (respectively, sporadic) point in $\mathcal{P}(E)$, then $j(E)$ is isolated (respectively, sporadic) by definition. To establish the other direction, suppose $j(E)$ is isolated. Then there exists an isolated point $x \in X_1(n)$ with $j(x)=j(E)$. By Theorem \ref{Thm:PrimPts}\,(i), we see that $x$ corresponds to a unique element $(x',n',d') \in \mathcal{P}(E)$ under the natural projection map. By the definition of $\mathcal{P}(E)$,
\[
\deg(x)=\deg(f)\cdot d',
\]
where $f\colon X_1(n) \rightarrow X_1(n')$ is the natural projection map. By \cite[Theorem 4.3]{BELOV}, the point $x'$ is isolated. The same argument shows that if $x\in X_1(n)$ is sporadic, then $x'$ is sporadic.

\section{Computing Primitive Degrees}
\label{sec:primitivedeg}

Let $E$ be a non-CM elliptic curve over $\Q$ and let $m\geq1$ be an integer. In this section, we discuss an algorithm for computing the list  of $m$-primitive degrees. At a high level, we are simply traversing the graph $G(E,m)$, always beginning at a source, until finding a sink. The sinks are the $m$-primitive points, and we record the associated $m$-primitive degree. We only retain the level $n$ and degree $d$ of a $m$-primitive point $(x,n,d)$ since, in our main algorithm, we will often try to show that $X_1(n)$ has no isolated points of degree $d$ at all. The input to the algorithm is $G=\im \rho_{E,m}\leq \GL_2(\Z/m\Z)$. We represent $m$-primitive degrees as tuples $( a,d)$ where $a$ is the level of the point $x\in X_1(a)$ and $d=\deg x$.

\begin{algorithm}[H] \caption{Compute Primitive Degrees}\label{alg:compute_prim_degree}
  \KwIn{$G\leq \GL_2(\Z/m\Z)$ such that $\im\rho_{E,m}=G$.}
  \KwOut{The multiset of $m$-primitive degrees for $E$.}
  Let $H \colonequals \langle G, -I_2\rangle \leq \GL_2(\Z/m\Z)$.\\
  Compute the orbits $O$ of $H$ acting on $(\Z/m\Z)^2$\label{Inc-1}. If $v\in (\Z/m\Z)^2 \cong E[m]$ has order $n$, then $v$ corresponds to $x\in X_1(n)$ with $j(x)=j(E)$. If $n>2$, then $\deg(x)=\#(Hv)/2$, and $\deg(x)=\#(Hv)$ otherwise.\\
   Let $D= \{\}$.\\
  For each orbit $Hv \in O$ with $v$ of order $n$, let $x \in X_1(n)$ be the associated point. Find the largest divisor $d\mid n$ such that $H(dv)$ associated to $x' \in X_1(n/d)$ satisfies $\deg(x)=\deg(x') \cdot \deg(X_1(n) \rightarrow X_1(n/d))$. Append $\langle n, (n/d, \deg(x'))\rangle$ to $D$.\label{alg:compute_primitive_degrees:minimal_levels}\\
  \Return{$D$}
\end{algorithm}

\begin{example}
Let $E/\Q$ be the non-CM elliptic curve \href{https://www.lmfdb.org/EllipticCurve/Q/147/b/1}{147.b1}, and let $m$ be the product of 2, 3, and all primes $\ell$ for which the mod $\ell$ Galois representation associated to $E$ is not surjective. Zywina's algorithm \cite{ZywinaAlgorithm} gives the image of the adelic Galois representation as the complete preimage of $G \leq \GL_2(\Z/546\Z)$, and applying Algorithm \ref{alg:reduce_level} to $G$ shows the $m$-adic Galois representation has level 78. By Proposition \ref{Rmk:Level}, the level of any primitive point in $\mathcal{P}(E)$ divides 78. Applying Algorithm \ref{alg:compute_prim_degree} to $\im \rho_{E,78}$ shows $\mathcal{P}(E)$ consists of 4 points: one point on $X_1(13)$ of degree 6, two points on $X_1(13)$ of degree 39, and one point on $X_1(1)$ of degree 1. The points on $X_1(13)$ are expected, since the mod 13 Galois representation of $E$ is not surjective. 

However, it is not necessarily the case that non-surjective primes must divide the level of some primitive point. For example, let $E/\Q$ be the non-CM elliptic curve \href{https://www.lmfdb.org/EllipticCurve/Q/232544/f/1}{232544.f1}. Then the adelic Galois representation of $E$ has level $1892$ and the $m$-adic level is 44. In particular, the mod 11 Galois representation associated to $E$ is non-surjective. However, in this case $\mathcal{P}(E)$ consists of a single point, namely, the degree one point on $X_1(1)$ associated to $E$.
\end{example}

We briefly discuss Algorithm~\ref{alg:compute_prim_degree}. Conceptually, to compute the $m$-primitive degrees of $E$, we compute for each closed point $x\in X_1(n)$ above $E$ with $n\mid m$ the unique (by Corollary~\ref{Cor:SingleSink}) $m$-primitive point induced by $x$ and record its level $a$ and degree $d$. In practice, instead of working with points on $X_1(m)$ above $E$, we compute with the matrix group $G=\im\rho_{E,m}$ and the orbits of points in $(\Z/m\Z)^2$ under the left-action of $G$. The following proposition allows us to calculate the degrees of points on modular curves from their associated orbit data.

\begin{proposition}\label{prop:compute_degrees}
Let $E/\Q$ be a non-CM elliptic curve and $m \in \Z^+$. Let $\im \rho_{E,m} \cong G \leq \GL_2(\Z/m\Z)$ and let $H=\langle G,-I\rangle$. Let $v\in (\Z/m\Z)^2$ have order $n|m$ and let $(E,P)$ be a representative of the point $x$ of $X_1(n)$  corresponding to $Gv$. If $n>2$, the degree of $x$ is $\#Hv/2$. If $n\leq 2$, the degree of $x$ is
$\#Hv=\#Gv$.
\end{proposition}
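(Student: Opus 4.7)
The plan is to derive the claim from Lemma~\ref{ResidueFieldLemma} together with a short orbit-stabilizer computation. First I would fix a Weierstrass model for $E$ over $\Q$ and identify $v$ with the corresponding torsion point $P \in E[n] \subseteq E[m]$ via the basis chosen to identify $E[m]\cong (\Z/m\Z)^2$. By Lemma~\ref{ResidueFieldLemma}, the residue field of the closed point $x$ associated to $(E,P)$ is $\Q(j(E), \mathfrak{h}(P))$, where $\mathfrak{h}$ is a Weber function for $E$; consequently $\deg(x)$ equals the length of the $\Gal_\Q$-orbit of $\mathfrak{h}(P)$.

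Next I would exploit the hypothesis that $E$ is non-CM. Since $j=0$ and $j=1728$ are CM $j$-invariants, this forces $j(E) \neq 0, 1728$, so $\Aut(\bar{E}) = \{\pm 1\}$, and hence $\mathfrak{h}(P) = \mathfrak{h}(P')$ if and only if $P' = \pm P$. Under the identification $E[m] \cong (\Z/m\Z)^2$, Galois acts through $G$ by left multiplication, so the Galois orbit of $\mathfrak{h}(P)$ is in bijection with the $G$-orbit of the class $\{\pm v\}$ in the quotient $(\Z/m\Z)^2/\{\pm I\}$.

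It then remains to compare this orbit with $Hv$. The natural map $Hv \to G\cdot\{\pm v\}$ sending $w \mapsto \{\pm w\}$ is surjective because $H = G \cup (-I)G$, and its fibers are precisely the pairs $\{w,-w\}$. When $n > 2$ we have $v \neq -v$, so each fiber has size $2$, giving $\deg(x) = \#Hv/2$. When $n \leq 2$ we have $2v = 0$, whence $gv = -gv$ for every $g \in G$; thus $-I$ acts trivially on $Gv$, so $Hv = Gv$ and the map is a bijection, yielding $\deg(x) = \#Hv = \#Gv$.

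The one subtlety is the case $-I \in G$, where $H = G$ and the simple description $H = G \sqcup (-I)G$ breaks down. Here a uniform orbit-stabilizer bookkeeping resolves the matter: letting $A = \mathrm{Stab}_G(v)$ and $B = \{g \in G : gv = -v\}$, one checks that $|\mathrm{Stab}_G(\{\pm v\})| = |A| + |B|$ in every case, and that $\#Hv/2 = |G|/(|A|+|B|)$ regardless of whether $-I \in G$, which confirms the degree formula in the $n > 2$ case. This is the main technical point, but it is purely a group-theoretic verification, not a hard obstruction.
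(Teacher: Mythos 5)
Your proof is correct, and it takes a genuinely different route from the paper's at the key step. Both arguments start from the same residue-field description (Lemma~\ref{ResidueFieldLemma} / Remark~\ref{ResidueFieldRmk}: for a non-CM curve over $\Q$ the degree of $x$ is the number of Galois conjugates of $\mathfrak{h}(P)$, equivalently the number of classes $\{\pm\sigma P\}$). But where you then count directly: the quotient map $Hv \to G\cdot\{\pm v\}$, $w\mapsto\{\pm w\}$, is surjective with fibers $\{w,-w\}$ because $Hv$ is stable under $-I$, giving $\#Hv/2$ when $n>2$ and $\#Hv=\#Gv$ when $n\leq 2$ --- the paper instead splits into cases on whether $-I\in\im\rho_{E,n}$: when $-I\in G$ it notes $P$ and $-P$ lie in one Galois orbit, and when $-I\notin G$ it invokes Sutherland's result (\cite[Corollary 5.25]{sutherland}) to replace $E$ by a quadratic twist $E'$ with $\im\rho_{E',n}=\langle G,-I\rangle$ representing the same closed point, then reduces to the first case. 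Your fiber-counting argument handles both cases uniformly and avoids the external twisting input, which is a small gain in self-containment; the paper's twist argument buys a perhaps more moduli-theoretic explanation of why only $\pm G$ matters. Two minor remarks: your appeal to Galois equivariance of $\mathfrak{h}$ implicitly uses that $\mathfrak{h}$ is $\Q$-rational (clear from the explicit formula in Remark~\ref{ResidueFieldRmk} since $E$ is defined over $\Q$), which is worth saying; and your final ``subtlety'' paragraph about $-I\in G$ is not actually needed, since your fiber argument never used disjointness of $G$ and $(-I)G$ --- the orbit--stabilizer verification there is correct but redundant.
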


\begin{proof}
    We begin by noting that $(E, P)$ and $(E, -P)$ induce the same closed point $x$ on $X_1(n)$. Therefore, since $E$ is defined over $\Q$, the degree of
    $x$ depends only on $x(P)$ and is equal to $[\Q(x(P)):\Q]$; see Remark \ref{ResidueFieldRmk}.
    We next observe that $[\Q(P):\Q]=\#Gv$. Assume that $n>2$. When $-I\in G$, the points $P$ and $-P$ are distinct and in the same Galois orbit so $[\Q(x(P)):\Q]=\frac{1}{2}[\Q(P):\Q]$ and $\#Hv=\#Gv$. We conclude that
    \[
    \deg x = [\Q(x(P)):\Q] = \frac{1}{2}[\Q(P):\Q] = \frac{1}{2}\#Gv = \frac{1}{2}\#Hv.
    \]
    If $-I \notin G$, then there exists a twist $E'$ of $E/\Q$ such that $\im \rho_{E',n}=\langle G, -I \rangle$ by \cite[Corollary 5.25]{sutherland}. The point $(E,P)$ is also represented by $(E',P')$ for some $P'\in E'[n]$, and the same argument from above  implies
    \[
    \#Hv = [\Q(P'):\Q] = 2[\Q(x(P')):\Q] = 2[\Q(x(P)):\Q],
    \]
    so we again have that the degree of the point represented by $(E,P)$ is $\frac{1}{2}\#Hv$.

    Now assume $n\leq 2$. Then $-I=I$ in $\GL_2(\Z/n\Z)$ so $G=H$, and $\Q(x(P))=\Q(P)$. We conclude
    \[
    \deg x = [\Q(x(P)):\Q] = [\Q(P):\Q] = \#Gv = \#Hv. \qedhere
    \]
\end{proof}

\section{Genus 0 adelic images do not produce isolated points}
\label{sec:genus0images}

\label{sec:genus0}
Let $E/\Q$ be an elliptic curve and $G\leq \GL_2(\hat \Z)$ its adelic image. Let $G(N)$ denote the image of its mod $N$ representation. Denote by $B_1(N)$ the subgroup of $\GL_2(\Z/N\Z)$ consisting of the upper triangular matrices with a $1$ in the upper left entry. Note that $X_{B_1(N)}=X_1(N)$. We say that a congruence group $\Gamma$ is of genus $g$ if $X_\Gamma$ is of genus $g$. We say that a point $x$ corresponds to an elliptic curve $E$ if $j(x) = j(E)$. In this section, we show that elliptic curves with genus $0$ mod $N$ image  do not correspond to $\PP^1$-isolated points on $X_1(N)$.

\begin{lemma} \label{lem:map to Symd}
Let $f\colon X \to Y$ be a finite morphism of curves of degree $d$. Then $f$ induces a non-constant morphism $f^*\colon Y \to X^{(d)}$.
\end{lemma}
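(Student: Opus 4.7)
The plan is to send a point $y \in Y$ to its scheme-theoretic fiber $f^{-1}(y)$, viewed as an effective divisor of degree $d$ on $X$, thereby producing a $Y$-family of degree-$d$ effective divisors on $X$. Concretely, I would invoke the standard fact that for a smooth curve $X$, the symmetric power $X^{(d)}$ represents the functor of effective relative Cartier divisors of degree $d$ on $X$; it then suffices to exhibit such a divisor on $X \times_k Y$ over $Y$.

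The natural candidate is the graph $\Gamma_f \subseteq X \times_k Y$, i.e.\ the image of the closed immersion $(\mathrm{id}_X, f)\colon X \hookrightarrow X \times_k Y$. Since $X \times_k Y$ is smooth of dimension $2$ and $\Gamma_f$ is smooth of codimension $1$, it is a Cartier divisor. Its projection to the second factor is identified with $f$ itself, which is finite of degree $d$ and automatically flat, because any finite morphism between smooth curves is flat. Hence $\Gamma_f$ is an effective relative Cartier divisor on $X \times_k Y / Y$ of constant degree $d$, and the universal property yields the desired morphism $f^*\colon Y \to X^{(d)}$. On geometric points this is $f^*(y) = \sum_{P \in f^{-1}(y)} e_P\, P$, the divisor-theoretic fiber of $f$ over $y$.

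To see $f^*$ is non-constant, I would argue by contradiction. If $f^*$ took a constant value $D_0 \in X^{(d)}(\overline{k})$, then each geometric fiber $f^{-1}(y)$ would be set-theoretically contained in the finite set $\Supp(D_0)$. But $f$ is a non-constant morphism between irreducible curves (its degree being $d \ge 1$), hence surjective, so
\[
X(\overline{k}) = \bigcup_{y \in Y(\overline{k})} f^{-1}(y) \subseteq \Supp(D_0),
\]
contradicting the fact that $X(\overline{k})$ is infinite. The only step genuinely requiring care is the invocation of the moduli interpretation of $X^{(d)}$ as the parameter space of effective degree-$d$ Cartier divisors on the smooth curve $X$; granted this standard fact, the construction of $f^*$ and the non-constancy argument are essentially formal.
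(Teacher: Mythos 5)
Your proposal is correct and follows essentially the same route as the paper: both realize the graph $\Gamma_f\subseteq X\times Y$ as a relative effective Cartier divisor of degree $d$ over $Y$ (using flatness of the finite morphism $f$ between smooth curves) and then invoke the moduli interpretation of $X^{(d)}$ as the parameter space of effective degree-$d$ divisors to obtain $f^*$. The only cosmetic difference is in the non-constancy step, where the paper simply observes that fibers over distinct points give distinct divisors, while you argue by contradiction via supports; both arguments are valid.
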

\begin{proof}
    The point is to show that the natural map $y \mapsto f^{-1}(y)$ is a morphism of schemes.  Note that the composition of $\Gamma_f\colon X \to X \times Y$ sending $X$ to the graph of $f$ with $X \times Y \to Y$ is just $f$, which is flat as a finite morphism of irreducible curves~\cite[Proposition~4.3.9]{Liu2002}. So the graph of $f$ defines a relative effective Cartier divisor in the sense of~\cite[Definition~3.4]{MilneJV} on $X \times Y/Y$ of degree $d$.
    Since all our schemes are regular, we can identify Cartier with Weil divisors. 
    In the hypothesis of the statement of~\cite[Theorem~3.13]{MilneJV}, we can thus take the effective divisor to be the graph of $f$. This allows us to conclude that, as a map of sets, $f^*$ maps $y \in Y$ to the degree-$d$ divisor $f^{-1}(y) \colonequals [\{y\} \times_Y X]$ with the multiplicities of the reduced subscheme of the points in the fiber product equal to the ramification indices (see~\cite[\S\,1.5, \S\,1.7]{FultonIntersectionTheory}).
     
    The morphism $f^*$ is non-constant since fibers above different points are mapped to different points by $f^*$.
\end{proof}


The following lemma rephrases the definition of a point being $\PP^1$-parametrized of degree $d$. 
\begin{lemma}[Characterization of $\bP^1$-parametrized points] \label{lem:iso1}
Let $X/k$ be a curve. Let $x \in X^{(d)}(k)$ be an irreducible degree $d$ divisor. The following are equivalent:
\begin{enumerate}
    \item The point $x$ is $\bP^1$-parametrized.

    \item There is a non-constant morphism $\bP^1 \rightarrow X^{(d)}$ containing $x$ in its image.
\end{enumerate}
\end{lemma}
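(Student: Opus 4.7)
My plan is to prove the two implications separately, using in an essential way the earlier Lemma \ref{lem:map to Symd} for the forward direction and the fact that every morphism from $\bP^1$ to an abelian variety is constant for the backward direction.

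For (i) $\Rightarrow$ (ii): Starting from a $\bP^1$-parametrized point $x$, by definition there exists some $x' \in X^{(d)}(k)$ with $x' \neq x$ and $\Phi_d(x) = \Phi_d(x')$. This means the divisor $x - x'$ is principal, so there exists a non-constant rational function $f \in k(X)^\times$ with $\operatorname{div}(f) = x - x'$. Since $x$ is an irreducible degree $d$ divisor, the support of $x$ and $x'$ are disjoint, which forces $f\colon X \to \bP^1$ to be a dominant morphism of degree $d$. Now I apply Lemma \ref{lem:map to Symd} to obtain the induced non-constant morphism $f^*\colon \bP^1 \to X^{(d)}$. By construction $f^*(0) = f^{-1}(0) = x$ as a divisor, so $x$ lies in the image of $f^*$.

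For (ii) $\Rightarrow$ (i): Suppose there is a non-constant morphism $g\colon \bP^1 \to X^{(d)}$ with $x \in g(\bP^1)$. Consider the composition $\Phi_d \circ g\colon \bP^1 \to \Jac(X)$. Since $\Jac(X)$ is an abelian variety and any morphism from $\bP^1$ to an abelian variety is constant, $\Phi_d \circ g$ is a constant map. Because $g$ is non-constant (and hence has $1$-dimensional image, so in particular takes more than one value on $k$-points, using that $k$ is infinite), we may pick $t_0, t_1 \in \bP^1(k)$ with $g(t_0) = x$ and $x' \colonequals g(t_1) \neq x$. Then $x' \in X^{(d)}(k)$ and $\Phi_d(x) = \Phi_d(x')$, so $x$ is $\bP^1$-parametrized by definition.

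The argument is quite short; the only subtlety worth flagging is the forward direction, where one must carefully check that Lemma \ref{lem:map to Symd} genuinely applies (requiring $f$ to be a finite morphism of degree $d$, which is guaranteed because $x$ and $x'$ are both effective of degree $d$ with disjoint supports). In the backward direction, one has to be slightly careful to exhibit a second $k$-rational point $x' \neq x$ in the image; since $g^{-1}(x)$ is a proper closed subscheme of $\bP^1_k$ and $k$ is infinite, there are infinitely many $k$-points in $\bP^1 \setminus g^{-1}(x)$, each of which provides a valid $x'$.
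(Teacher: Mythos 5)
Your proof is correct and follows essentially the same route as the paper: the forward direction extracts a degree $d$ function from $\operatorname{div}(f)=x-x'$ (using irreducibility of $x$ to get disjoint supports) and invokes Lemma~\ref{lem:map to Symd}, while the backward direction uses that any morphism from $\PP^1$ (a rational, hence unirational, variety) to the abelian variety $\Jac(X)$ is constant, so the image of $\PP^1$ is contracted by $\Phi_d$. Your extra care in producing a second $k$-rational point $x'\neq x$ in the image (via infinitude of $\bP^1(k)$ outside the fiber over $x$) only spells out a detail the paper leaves implicit.
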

\begin{proof}
(i)$\implies$(ii): If $x$ is $\PP^1$-parametrized, then there is an $x' \in X^{(d)}(k)$ different from $x$ such that 
$x-x'=\mathrm{div}(f)$ for a non-constant function $f\in k(X)$. Here we treat $x$ and $x'$ as effective divisors of degree $d$ on $X$. This gives a degree $d$ map $f\colon X \to \PP^1$, which gives the copy of $\PP^1$ inside $X^{(d)}$ as the image under pullback of $f$ as in Lemma~\ref{lem:map to Symd}.  

(ii)$\implies$(i): 
Since every rational map from a unirational variety to an abelian variety is constant~\cite[Corollary~3.8]{MilneAV}, the $\PP^1$ is contracted to a point under $\Phi_d\colon X^{(d)} \rightarrow \Jac(X)$.
\end{proof}

\begin{lemma} \label{lem:iso2}
Let $f\colon X\rightarrow Y$ be a finite morphism of curves and $x$ a closed point on $X$, and assume  $\deg x = \deg f(x)$. If $x$ is $\PP^1$-parametrized, then so is $f(x)$.
\end{lemma}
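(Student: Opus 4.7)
The plan is to transfer the $\PP^1$-parametrization of $x$ across $f$ using the induced map on symmetric products. Write $d = \deg x = \deg f(x)$. By Lemma~\ref{lem:iso1}, there is a non-constant morphism $g\colon \PP^1 \to X^{(d)}$ whose image contains $x$ (viewed as the effective divisor $P_1 + \cdots + P_d$ on $X$, where $\{P_1,\dots,P_d\}$ is the Galois orbit of $x$). The finite morphism $f\colon X \to Y$ induces a $k$-morphism on symmetric products,
\[
f^{(d)}\colon X^{(d)} \to Y^{(d)}, \qquad \sum_{i=1}^d R_i \mapsto \sum_{i=1}^d f(R_i).
\]
The goal is to check that the composition $f^{(d)} \circ g\colon \PP^1 \to Y^{(d)}$ satisfies the hypotheses of Lemma~\ref{lem:iso1} for the point $f(x)$.

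First, I would verify that $f^{(d)}(x) = f(x)$ as points of $Y^{(d)}(k)$. The image $f^{(d)}(x)$ is the divisor $f(P_1) + \cdots + f(P_d)$. Since the Galois orbit of $f(x)$ is $\{f(P_1),\dots,f(P_d)\}$ and has size exactly $d$ by hypothesis, the points $f(P_i)$ are pairwise distinct, so $f^{(d)}(x)$ coincides with the effective divisor associated to the closed point $f(x)$. In particular, $f(x)$ lies in the image of $f^{(d)}\circ g$.

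Next, I would argue that $f^{(d)} \circ g$ is non-constant, which is the only substantive step. Because $f$ is finite, the fibers of $f^{(d)}$ are finite sets: a divisor $D \in Y^{(d)}$ has preimage contained in the finite set of degree-$d$ effective divisors supported on $f^{-1}(\Supp D)$, which itself is finite. So every fiber of $f^{(d)}$ is zero-dimensional. If $f^{(d)} \circ g$ were constant, then the irreducible one-dimensional image $g(\PP^1) \subset X^{(d)}$ would lie inside a single fiber of $f^{(d)}$, contradicting zero-dimensionality of the fibers. (Equivalently, one may note that $f^{(d)}$ is a finite morphism between projective varieties of the same dimension $d$, hence cannot contract a curve to a point.)

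Combining the two points, $f^{(d)} \circ g\colon \PP^1 \to Y^{(d)}$ is a non-constant morphism whose image contains $f(x)$. Lemma~\ref{lem:iso1} then yields that $f(x)$ is $\PP^1$-parametrized. The main obstacle is the non-constancy check; everything else is a routine unwinding of the hypothesis $\deg x = \deg f(x)$, which is precisely what is needed to identify the push-forward of the divisor class of $x$ with the divisor attached to the closed point $f(x)$.
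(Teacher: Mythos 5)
Your proposal is correct and takes essentially the same route as the paper: compose the non-constant map $\PP^1 \to X^{(d)}$ furnished by Lemma~\ref{lem:iso1} with the induced map $f^{(d)}\colon X^{(d)} \to Y^{(d)}$, and conclude via Lemma~\ref{lem:iso1} again, with non-constancy coming from the finiteness of $f^{(d)}$. You simply make explicit two points the paper leaves implicit, namely that the hypothesis $\deg x = \deg f(x)$ identifies $f^{(d)}(x)$ with the divisor of the closed point $f(x)$, and the finite-fiber argument for why the composite cannot be constant.
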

\begin{proof}
Let $d=\deg(x)$. The proof is diagram chasing using $\PP^1 \to X^{(d)}\stackrel{f^{(d)}}{\to} Y^{(d)}$, where $f^{(d)}$ is the natural map from $X^{(d)}$ to $Y^{(d)}$ induced by $f$.
If $x\in X$ is not $\bP^1$-isolated (i.e., $x$ is $\bP^1$-parametrized), then
by Lemma~\ref{lem:iso1}, $x$ can be viewed as a point on $X^{(d)}$ lying on the image of a non-constant map from $\PP^1$ to $X^{(d)}$.  
Furthermore, $f(x)$ lies on the image of this $\PP^1$ (which is again a $\PP^1$ by \Cref{lem:map to Symd} or because the induced morphism on the $d$-th symmetric power is again finite) inside of $Y^{(d)}$. Now the conclusion follows by \Cref{lem:iso1}.
\end{proof}

The following lemma was communicated to us by Maarten Derickx. By $X_K$ we denote the base change of a curve $X$ to a field $K$.
\begin{lemma*}[Descent lemma (Derickx)] Let $X/\Q$ be a curve, $K$ a number field of degree $[K:\Q]=d_2$, and let $x_0\in X_K$ be a closed point (over $K$) of degree $d_1$ that is $\PP^1_K$-parametrized. Let $x$ be the image of $x_0$ under the map $X_K\rightarrow X$. If $K \subseteq \Q(x)$, then $x$ is $\PP^1_\Q$-parametrized of degree $d=d_1d_2$.

\end{lemma*}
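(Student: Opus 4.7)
The strategy is to descend the $\PP^1_K$-parametrization of $x_0$ to $\Q$ by applying the norm $N_{K(X)/\Q(X)}$ to a rational function witnessing the parametrization; the associated divisor on $X$ is precisely the pushforward under $\pi\colon X_K\to X$ of the original divisor on $X_K$.

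First, apply Lemma~\ref{lem:iso1} to $x_0$: it yields a nonconstant $f\in K(X)^\times$ with $\mathrm{div}_{X_K}(f)=x_0-x_0'$ on $X_K$, for some $x_0'\in X_K^{(d_1)}(K)$ distinct from $x_0$. Setting $N(f)\colonequals N_{K(X)/\Q(X)}(f)\in\Q(X)^\times$, compatibility of the norm with divisors under the finite morphism $\pi$ yields
$\mathrm{div}_X(N(f))=\pi_*\,\mathrm{div}_{X_K}(f)=\pi_*(x_0)-\pi_*(x_0')$.
Hence $\pi_*(x_0)$ and $\pi_*(x_0')$ are linearly equivalent $\Q$-rational effective divisors on $X$, each of degree $d_1 d_2$. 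To conclude that $x$ itself is $\PP^1_\Q$-parametrized of degree $d_1 d_2$, it suffices to verify (i) $\pi_*(x_0)$ equals the effective divisor attached to the closed point $x$, and (ii) $\pi_*(x_0)\neq \pi_*(x_0')$.

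For (i), I would use $K\subseteq\Q(x)$ to show $K(x_0)=\Q(x)$: the $K$-algebra $\Q(x)$ is a canonical quotient of $\Q(x)\otimes_\Q K$ via multiplication, and $x_0$ above $x$ should correspond to this quotient under the compatibility of the $K$-algebra structures on $K(x_0)$ coming from $X_K\to\Spec K$ and from $\pi$ composed with the given inclusion $K\hookrightarrow\Q(x)$. This forces $d=d_1 d_2$ and $\pi_*(x_0)=x$. For (ii), the $\PP^1_K$-family from Lemma~\ref{lem:iso1} has infinitely many $K$-rational members, while $\pi_*$ has finite fibers on effective divisors of bounded degree; if the specific $x_0'$ chosen above fails, replacing it by another member of the family produces the required $\pi_*(x_0')\neq\pi_*(x_0)$. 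Feeding the resulting linear equivalence back through Lemma~\ref{lem:iso1} then exhibits $x$ as $\PP^1_\Q$-parametrized of degree $d_1 d_2$.

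The main obstacle is the residue-field identification $K(x_0)=\Q(x)$ under the hypothesis $K\subseteq\Q(x)$. One must carefully distinguish the two $K$-algebra structures on $K(x_0)$---the intrinsic one from $X_K\to\Spec K$ and the one obtained by composing $\pi$ with the fixed embedding $K\hookrightarrow\Q(x)$---and show that their compatibility in the stated setup forces $K(x_0)$ to realize the diagonal component of the étale $K$-algebra $\Q(x)\otimes_\Q K$, rather than a larger extension; this in turn yields the degree equality $d=d_1 d_2$. A secondary, more routine point is the nondegeneracy check for (ii), which amounts to the finiteness of the fibers of the pushforward on effective divisors of bounded degree.
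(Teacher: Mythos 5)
Your outline is correct, but it takes a genuinely different route from the paper. The paper descends the parametrizing \emph{morphism}: after an automorphism of $\PP^1_K$ it chooses $f\colon \PP^1_K\to X_K^{(d_1)}$ with $f(\infty)=x_0$ and forms $g=f^{\sigma_1}+\cdots+f^{\sigma_{d_2}}\colon \PP^1_\Q\to X^{(d)}$, defined over $\Q$ by Galois descent, so that $g(\infty)=x_0^{\sigma_1}+\cdots+x_0^{\sigma_{d_2}}=x$ and Lemma~\ref{lem:iso1} concludes. You instead descend the \emph{linear equivalence}: push $\mathrm{div}(f)=x_0-x_0'$ forward via the norm $N_{K(X_K)/\Q(X)}$, getting $\pi_*x_0\sim\pi_*x_0'$ over $\Q$ (and at the end you do not even need Lemma~\ref{lem:iso1}; the definition of $\PP^1$-parametrized suffices once $\pi_*x_0=x$ and $\pi_*x_0'\neq x$). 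The paper's approach buys freedom from your nondegeneracy issue (a constant norm has no analogue there, beyond the easy check that $g$ is nonconstant, which the paper leaves implicit), while yours is more elementary, staying at the level of functions and divisors; your repair of the degenerate case $\pi_*x_0'=\pi_*x_0$ --- replace $x_0'$ by another member of the pencil $|x_0|$, using that $\pi_*$ has finite fibers on effective divisors of bounded degree --- is valid, since infinitely many $K$-rational effective divisors are linearly equivalent to $x_0$ and only finitely many divisors are supported on $\pi^{-1}(\Supp \pi_*x_0)$ with bounded multiplicities.

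Concerning the step you flag as the main obstacle: the identification $K(x_0)=\Q(x)$ (equivalently $\pi_*x_0=x$ with multiplicity one and $\deg x=d_1d_2$) is exactly the point at which the paper's proof uses $K\subseteq\Q(x)$, and the paper simply asserts it as the identity $x=x_0^{\sigma_1}+\cdots+x_0^{\sigma_{d_2}}$. Your caution is warranted in the following sense: with only an abstract inclusion $K\subseteq\Q(x)$ and $K$ not Galois over $\Q$, the point $x_0$ lying over $x$ could correspond to a non-diagonal factor of $\Q(x)\otimes_\Q K$, so that $K(x_0)$ is a compositum strictly larger than $\Q(x)$ and $\deg x<d_1d_2$. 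The hypothesis must be read as saying that the inclusion $K\hookrightarrow\Q(x)$ is the one induced by $x_0$, i.e., the structure map $K\to K(x_0)$ lands in the image of $\Q(x)$; then $K(x_0)=K\cdot\Q(x)=\Q(x)$, which is precisely your ``diagonal component'' argument, and it yields $\pi_*x_0=x$ and $d=d_1d_2$. This is the reading that holds in the application in Theorem~\ref{Genus0Prop}, where $K\subseteq\Q(x)$ holds ``by construction'' through $x_0$ itself. With that understanding, your proof closes the gap exactly as you sketch, and the remaining pieces (norm compatible with pushforward, finiteness of fibers of $\pi_*$) are standard.
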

\begin{proof}
    After composing with an appropriate automorphism of $\PP^1_K$ if necessary, we let $f:\PP^1_K\rightarrow X_K^{(d_1)}$ be such that $f(\infty)=x_0$. Define the map $g:\PP^1_\Q\rightarrow X_\Q^{(d)}$ as $f^{\sigma_1}+ \ldots +f^{\sigma_{d_2}}$, where the $\sigma_i$ are the embeddings of $K$ into $\overline K$; it is a map defined over $\Q$ by descent theory. As 
    $$x=x_0^{\sigma_1} + \ldots +x_0^{\sigma_{d_2}}=f(\infty)^{\sigma_1}+ \ldots + f(\infty)^{\sigma_{d_2}}=g(\infty),$$
    it follows that $x$ lies in $g(\PP^1_\Q(\Q))$, so it is $\PP^1_\Q$-parametrized by Lemma \ref{lem:iso1}.
\end{proof}

\begin{theorem} \label{Genus0Prop}
Let $E/\Q$ be an elliptic curve with mod $N$ image $G(N)$ of genus $0$ and $N$ a positive integer. 
Then every $x \in X_1(N)$ with $j(x) = j(E)$ is $\PP^1$-parametrized.
\end{theorem}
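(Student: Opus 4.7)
The strategy is to produce a non-constant morphism from $\PP^1$ to the symmetric power $X_1(N)^{(d)}$ whose image contains $x$, where $d = \deg(x)$; after which Lemma~\ref{lem:iso1} gives that $x$ is $\PP^1$-parametrized. The hypothesis that $X_{G(N)}$ has genus $0$, combined with the $\Q$-rational point $y_0 \in X_{G(N)}(\Q)$ furnished by $E/\Q$, gives $X_{G(N)} \cong \PP^1_\Q$. So it suffices to build a non-constant $\Q$-morphism $\phi\colon X_{G(N)} \to X_1(N)^{(d)}$ with $\phi(y_0) = x$.

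To construct $\phi$ I would pass through an intermediate modular curve. Write $x = [(E, P)]$ for a geometric lift with $P \in E[N]$ of order $N$, pick a basis of $E[N]$ putting $P = (1, 0)$, and set $H \colonequals B_1(N) \cap G(N)$. The inclusions $H \le B_1(N)$ and $H \le G(N)$ yield two $\Q$-rational morphisms
\[
f \colon X_H \to X_1(N), \qquad g \colon X_H \to X_{G(N)}.
\]
The crucial numerical step is to verify that $\deg(g) = d$. By Proposition~\ref{prop:mapdegree} one has $\deg(g) = [\pm G(N) : \pm H]$, and this matches the orbit-stabilizer description of $\deg(x)$ used in Proposition~\ref{prop:compute_degrees}: both quantities reduce, after accounting for $\pm I$, to the size of the $G(N)$-orbit of $(1, 0)$ in $(\Z/N\Z)^2$.

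Once $\deg(g) = d$ is in hand, the fiber $g^{-1}(y_0)$ is a single closed point $\tilde x \in X_H$ of degree $d$, because the geometric fiber is a $G(N)/H$-torsor on which $\Gal_\Q$ acts transitively via $\rho_{E, N}$. Moreover $f(\tilde x) = x$ as closed points, since $f$ is injective on the geometric points of $\tilde x$: they share the same underlying pair $(E, P')$ and are distinguished only by auxiliary level data that $f$ forgets. Lemma~\ref{lem:map to Symd} applied to $g$ (which is unramified at the non-cuspidal, non-elliptic point $y_0$) then produces $g^{*}\colon X_{G(N)} \to X_H^{(d)}$ with $g^{*}(y_0) = \tilde x$, and composing with the induced symmetric-product map $f^{(d)}\colon X_H^{(d)} \to X_1(N)^{(d)}$ yields $\phi$ with $\phi(y_0) = x$. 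The morphism $\phi$ is non-constant because as $y' \in X_{G(N)}$ varies the $j$-invariants appearing in the support of $\phi(y')$ vary as well.

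The main obstacle should be the combinatorial bookkeeping inside the identity $\deg(g) = d$: one must carefully split on whether $-I \in G(N)$ and on whether the $G(N)$-orbit of $(1, 0)$ is stable under negation, and also handle the small cases $N \le 2$ where $-I$ becomes trivial modulo $N$. The rest of the argument is essentially formal once this identity is secured.
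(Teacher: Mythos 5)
Your skeleton coincides with the paper's: both arguments pass through the intermediate curve attached to $B\colonequals B_1(N)\cap G(N)$, match its degree over $X_{G(N)}\isom\PP^1$ with $\deg(x)$ via a stabilizer computation, and push a copy of $\PP^1$ into a symmetric power using Lemma~\ref{lem:map to Symd} and Lemma~\ref{lem:iso1}. But there is a genuine gap in running this entirely over $\Q$: the subgroup $B$ typically has \emph{non-surjective determinant} (precisely in the interesting cases where $G(N)$ is small), and then the curve "$X_B$" over $\Q$ is not geometrically integral --- its field of constants is the fixed field $K$ of $\det B$ inside $\Q(\zeta_N)$. Consequently Proposition~\ref{prop:mapdegree}, which you invoke to get $\deg(g)=[\pm G(N):\pm B]$, does not apply (its hypothesis is surjective determinant), and Lemmas~\ref{lem:map to Symd}, \ref{lem:iso1}, \ref{lem:iso2} are stated for curves in the paper's sense (geometrically integral), so they cannot be applied to $X_B/\Q$ as written. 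Likewise your claim that $g^{-1}(y_0)$ is a single closed point of degree $d$, justified by a moduli-theoretic Galois action on the fiber, silently uses the standard moduli interpretation, which is only available in this form for level structures with surjective determinant.

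This is not "combinatorial bookkeeping about $\pm I$ and small $N$" --- it is the central difficulty the paper's proof is built around. The paper replaces $X_B/\Q$ by the geometrically integral curve $X'_B=\Gamma(B)\backslash\mathcal H^*$ viewed over $K$, runs your diagram chase over $K$ (degree $d_1=[\pm\Gamma(G(N)):\pm\Gamma(B)]$, stabilizer $\pm B$) to conclude that the component $x_0$ of $x$ over $K$ is $\PP^1_K$-parametrized, and only then descends to $\Q$ using the Descent Lemma of Derickx, whose applicability hinges on the containment $K\subseteq\Q(x)$ (coming from the Weil pairing) and yields $\PP^1_\Q$-parametrization in degree $d_1d_2$ with $d_2=[K:\Q]$. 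Your numerical identity $\deg(x)=[\pm G(N):\pm B]$ is correct (after twisting so that $-I\in G(N)$), and your fiber description is morally the paper's stabilizer computation, but without either extending the cited lemmas to geometrically reducible curves or performing the base change to $K$ plus descent, the argument as proposed does not go through.
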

\begin{proof} 
Let $x=[E,P]\in X_1(N)$. Replacing $G(N)$ with an appropriate choice of conjugation if necessary, we may assume $G(N)$ is with respect to a basis having $P$ as its first element. By replacing $E/\Q$ with a twist if necessary, we may assume $-I \in G(N)$; see \cite[Corollary 5.25]{sutherland}. Let $B \colonequals B_1(N) \cap G(N)$ in $G(N)$, and let $S:=\det B \leq (\Z/N\Z)^\times$. The group $S$ is canonically isomorphic to a subgroup of $\Gal(\Q(\zeta_N)/\Q)\simeq (\Z/N\Z)^\times$, with which we identify it. Let $K$ be the fixed field of $S$ and $d_2=[K:\Q]=(\Z/N\Z)^\times /\#S$. Note $K \subseteq \Q(x)$ by construction. Let $y$ be a $K$-rational point on $X_{G(N),K}$ corresponding to $E/K$, with respect to our chosen basis.

 Let $\pi:\GL_2(\Z) \rightarrow \GL_2(\Z/N\Z)$ be reduction modulo $N$, and $\Gamma(B)$ be $\pi^{-1}(B)\cap \SL_2(\Z)$. Similarly, we let $\Gamma(G(N)) \coloneqq \pi^{-1}(G(N))\cap \SL_2(\Z)$. We define $X'_B$ to be the modular curve $\Gamma(B) \backslash \mathcal H ^*$, viewed as a geometrically integral curve over $K$. We note that this definition differs from the definition of $X_B$  as in \Cref{sec:modular_curves} if $B$ has non-surjective determinant: $X_B$ is defined over $\Q$ and is geometrically reducible.

 The proof is diagram chasing (over $K$) in the following two diagrams while keeping track of the degree of the point.  The situation is as follows:
\[
\begin{tikzcd}
     &X'_B \arrow[rd, "f"] \arrow[dl,"g",swap] &  \\
     X_1(N)_K & &X_{G(N),K} \isom \PP^1_K
\end{tikzcd}
     \Longrightarrow
\begin{tikzcd}
      &X'_B{}^{(d_1)} \arrow[dl,"g^{(d_1)}",swap] & \\
     X_1(N)^{(d_1)}_K & &X_{G(N),K} \isom \PP^1_K\arrow[ul, "f^*",swap]
\end{tikzcd}
\]
Let $f\colon X'_{B}\rightarrow X_{G(N),K} \isom \PP^1$ be the corresponding map of modular curves. By \cite[p.66]{modular} we have $\deg(f)=[\pm \Gamma(G(N)):\pm\Gamma(B)]=d_1$.
\Cref{lem:map to Symd} yields a non-constant morphism $f^*\colon X_{G(N),K} \to X'_{B}{}^{(d_1)}$ such that $f^*(y)$ lies on a $\PP^1_K \isom X_{G(N),K}$. 
Hence the irreducible divisor represented by $f^*(y)$ in $X'_B$ is $\PP^1_K$-parametrized by~\Cref{lem:iso1}. Consider the morphism $g\colon X'_B \to X_1(N)_K$ corresponding to $B \leq B_1(N)$. Let $x'\in X'_B$ be such that $f(x')=y$. Then $g(x')=x_0$ is a closed point on $X_1(N)_K$ (over $K$) which maps to $x$ under the map $X_1(N)_K \rightarrow X_1(N)_{\Q}$. That is, $x_0$ is an irreducible component of the base change of $x$ to $K$. We want to show that $g(x')$ has the same degree as $x'$ (all as closed points over $K$). Note that $\deg(x')$ is $d_1$ and $\deg(x_0)=[G(N)_K: \pm B]$, where $G(N)_K$ denotes the mod $N$ image of $E/K$, as $\pm B$ is the stabilizer of $x_0$. Hence it satisfies $\deg(g(x')) = \deg(x')$, so $x_0$ is $\PP^1_K$-parametrized by~\Cref{lem:iso2}.
Now it follows that $x$ is $\PP^1_\Q$-parametrized by the Descent lemma. 
\end{proof}

\begin{corollary}\label{cor:adelicgaloisgenus0}
Let $E/\Q$ be an elliptic curve with adelic image $G$ of genus $0$ and $n$ a positive integer. 
Then 
every $x \in X_1(n)$ with $j(x) = j(E)$ is $\PP^1$-parametrized.
\end{corollary}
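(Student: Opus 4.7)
The plan is to deduce this corollary from Theorem \ref{Genus0Prop} by verifying, for each positive integer $n$, that the mod $n$ image $G(n) = \im \rho_{E,n}$ itself gives rise to a modular curve $X_{G(n)}$ of genus $0$. Once this is established, the conclusion follows by applying Theorem \ref{Genus0Prop} directly with $n$ in place of $N$.

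To carry out the first step, let $N_0$ denote the level of $G$, so that by hypothesis $X_G = X_{G(N_0)}$ has genus $0$. Set $N = \lcm(n, N_0)$. Since $N_0$ is the level of $G$ and $N_0 \mid N$, the definition of level implies that $G(N)$ coincides with the preimage of $G(N_0)$ under the natural reduction map $\GL_2(\Z/N\Z) \to \GL_2(\Z/N_0\Z)$. Consequently, $G(N)$ and $G(N_0)$ pull back to the same subgroup of $\GL_2(\wh\Z)$, so the modular curve $X_{G(N)}$ is identified with $X_{G(N_0)}$ and in particular also has genus $0$.

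Next, using that $n \mid N$ together with the inclusion of $G(N)$ in the preimage of $G(n)$ under the reduction $\GL_2(\Z/N\Z) \to \GL_2(\Z/n\Z)$, one obtains via Proposition \ref{prop:mapdegree}, applied to the corresponding preimages in $\GL_2(\wh\Z)$, a $\Q$-rational morphism $X_{G(N)} \to X_{G(n)}$. This is a non-constant morphism between complete irreducible curves and is therefore surjective, so $X_{G(n)}$ receives a dominant morphism from a rational curve. It must itself have genus $0$ by L\"uroth's theorem (or Riemann--Hurwitz). Theorem \ref{Genus0Prop} then completes the argument.

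The crux of the approach is the identity $G(N) = \pi^{-1}(G(N_0))$ that holds whenever $N$ is a multiple of the level $N_0$. This is what allows one to identify $X_{G(N)}$ with $X_{G(N_0)}$ and then to transfer the genus-$0$ condition from level $N_0$ down to level $n$. The identity is immediate from the definition of level, but it is the step at which the hypothesis on the adelic image (rather than merely on the mod $n$ image) is used most essentially. Everything else is routine: standard compatibility of modular curves under reduction, and the fact that rational curves cannot cover higher-genus ones.
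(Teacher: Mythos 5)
Your proof is correct and takes essentially the same route as the paper: deduce that the mod $n$ image $G(n)$ has genus $0$ from the genus-$0$ hypothesis on the adelic image $G$, and then apply Theorem~\ref{Genus0Prop}. The paper states the first step in one line, while you make explicit the identification $X_G \cong X_{G(N)}$ for $N=\lcm(n,N_0)$ and the dominant map $X_{G(N)} \to X_{G(n)}$ with the genus comparison; the only slight imprecision is calling a genus-$0$ curve over $\Q$ ``rational'' (it need not have a rational point), but the Riemann--Hurwitz inequality is geometric, so your conclusion stands.
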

\begin{proof}
Since $G$ is by assumption of genus $0$, it follows that so is $G(n)$. The result then follows from Theorem~\ref{Genus0Prop}.
\end{proof}

\begin{example}\label{eg:modngenus0}
  Consider the elliptic curve $E$ with LMFDB label \href{https://www.lmfdb.org/EllipticCurve/Q/15/a/7}{15.a7}, which has adelic image of genus 13. We can show $E$ has the following primitive points:
\begin{itemize}
    \item $X_1(1)$ of degree 1,
    \item $X_1(2)$ of degrees 1 and 2,
    \item $X_1(4)$ of degree 1,
    \item $X_1(8)$ of degree 2,
    \item $X_1(16)$ of degree 4, and
    \item $X_1(32)$ of degree 8.
\end{itemize}
By Theorem \ref{Thm:PrimPts}, it follows that $j(E)=-1/15$ is isolated if and only if one of these points is isolated. Any point $x \in X_1(n)$ with $\deg(x)>\text{genus}(X_1(n))$ has Riemann--Roch space of dimension at least 2 and thus is $\PP^1$-parametrized. It remains only to address the point on $X_1(32)$ of degree 8. However, the mod $32$ Galois representation of $E$ has genus $0$, so this point is $\PP^1$-parametrized by Theorem \ref{Genus0Prop}. We may conclude that $j(E)=-1/15$ is not an isolated $j$-invariant.
\end{example}

\section{Validity of Main Algorithm}
\label{sec:validity}

In this section, we will prove that our Main Algorithm as stated in Section~\ref{sec:mainalg} is valid. The section concludes with an additional example.

\begin{theorem}\label{thm:validity_main_algorithm}
Let $j\in\Q$ be a non-CM $j$-invariant. If Algorithm \ref{alg:mainalgorithm} returns $\{( a_1,d_1), \dots, ( a_k,d_k) \}$, then any isolated (respectively, sporadic) point $x\in X_1(N)$ for $N \in \Z^+$ with $j(x)=j$ maps under the natural projection map to an isolated (respectively, sporadic) point of degree $d_i$ on $X_1(a_i)$ for some  $1 \leq i \leq k$.
\end{theorem}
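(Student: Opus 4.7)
The plan is to follow an arbitrary isolated (respectively, sporadic) point through the steps of Algorithm~\ref{alg:mainalgorithm} and verify that it is never discarded by any filtering step. Fix a non-CM elliptic curve $E/\Q$ with $j(E)=j$ and suppose $x \in X_1(N)$ is isolated (respectively, sporadic) with $j(x)=j$. The central reduction, which I would carry out first, is to apply Theorem~\ref{Thm:PrimPts}\,(i): the point $x$ maps under the natural projection to a unique primitive point $x' \in \mathcal{P}(E)$ of some level $a$ dividing $N$ and degree $d \colonequals \deg(x')$, with $\deg(x)=\deg(f)\cdot d$ for the natural map $f\colon X_1(N)\to X_1(a)$. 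Since $x$ is isolated (respectively, sporadic), the argument used in the proof of Theorem~\ref{Thm:PrimPts}\,(ii), via \cite[Theorem 4.3]{BELOV}, shows that $x'$ is itself isolated (respectively, sporadic). The pair $(a,d)$ is therefore the candidate $(a_i,d_i)$ that the theorem asserts must appear in the output $M$.

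Next I would verify that this candidate pair is produced by, and survives, the subsequent steps of Algorithm~\ref{alg:mainalgorithm}. By Proposition~\ref{Rmk:Level} and the discussion of Section~\ref{ssec:finiteness of PE}, the set $\mathcal{P}(E)$ coincides with the sinks of the graph $G(E,m_0)$, where $m_0$ is the level of the $m$-adic representation returned by Algorithm~\ref{alg:reduce_level} (whose correctness is Proposition~\ref{Prop:Level}); in particular $a\mid m_0$. Algorithm~\ref{alg:compute_prim_degree}, whose degree computations are justified by Proposition~\ref{prop:compute_degrees} together with Corollary~\ref{Cor:SingleSink}, therefore enumerates every primitive point of $E$ and appends an entry of the form $\langle n,(a,d)\rangle$ to its output $D$ for some $n$. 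It remains to check that the filters in Steps~5 and~7 do not remove $(a,d)$. For Step~5: if $d > \operatorname{genus}(X_1(a))$, then Riemann--Roch applied to the effective divisor $D_{x'}$ associated to $x'$ yields $\ell(D_{x'})\geq d - \operatorname{genus}(X_1(a)) + 1 \geq 2$, producing a non-constant function whose divisor exhibits $x'$ as $\PP^1$-parametrized and thus contradicting the isolation of $x'$; so necessarily $d\leq \operatorname{genus}(X_1(a))$ and $(a,d)$ passes into $D'$ and hence into $M$ with the correct multiplicity (Step~6), since distinct primitive points of the same level and degree yield distinct closed points on $X_1(a)$. For Step~7: if the mod $a$ image of $E$ corresponded to a modular curve of genus $0$, then Theorem~\ref{Genus0Prop} would force $x'$ to be $\PP^1$-parametrized, again a contradiction. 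Thus $(a,d)$ is retained as some $(a_i,d_i)$ in the final multiset $M$, completing the proof.

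The main obstacle is really the faithful bookkeeping in the passage from the adelic Galois image to matrix-group computations on $\GL_2(\Z/m_0\Z)$ and from there to closed points on $X_1(n)$ for $n\mid m_0$, together with ensuring multiplicities are correctly tracked when several distinct primitive points share the same $(a,d)$. All of this delicate work, however, has been performed in the preceding sections through Propositions~\ref{Prop:Level},~\ref{Prop:gcd},~\ref{prop:compute_degrees}, Corollary~\ref{Cor:SingleSink}, and Theorem~\ref{Thm:PrimPts}, so the proof of the present theorem is a clean assembly of those results with the two non-isolation criteria (Riemann--Roch, and the genus-zero result of Theorem~\ref{Genus0Prop}) that underlie the two filters.
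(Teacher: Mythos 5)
Your proposal is correct and follows essentially the same route as the paper's proof: reduce to the unique primitive point via Theorem \ref{Thm:PrimPts} (with Proposition \ref{Rmk:Level} and the correctness of Algorithms \ref{alg:reduce_level} and \ref{alg:compute_prim_degree} guaranteeing the pair $(a,d)$ enters $D$), then check that the Riemann--Roch filter and the genus-zero filter (Theorem \ref{Genus0Prop}) cannot discard an isolated or sporadic primitive point. The only difference is organizational — you trace a single isolated point through the algorithm rather than validating the steps in sequence — which changes nothing of substance.
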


\begin{remark}
Since our algorithm builds on that of Zywina \cite{ZywinaAlgorithm}, it is possible that our algorithm will give an error if the adelic image cannot be computed. See Zywina \cite{ZywinaImagesGit} for details. In particular, he notes that ``errors will always occur if $E$ gives rise to an unknown exceptional rational point on certain high genus modular curves." There are no known examples where this error occurs, and in particular it does not occur for any elliptic curves in the LMFDB or Stein--Watkins Database. An algorithm to compute the adelic image does always exist (see work of Brau Avila \cite{Avilo15}), though it is not practical in general.
\end{remark}

\begin{remark}
The range of the moduli to which \texttt{PrimitiveDegreesOfPoints} is applicable is restricted by the amount of memory Magma can use. The input of very large matrix groups may result in a runtime error. Because we take preliminary steps to reduce the modulus of the matrix group (i.e., Section \ref{sec:madicrep}), this error did not occur when running our full algorithm on all elliptic curves currently in the LMFDB.
\end{remark}

\begin{proof}
Let $E/\Q$ be a non-CM elliptic curve with $j(E)=j$. We note that the choice of $E$ will not impact our result; see Section \ref{sec:ClosedPts}. We may compute $\im \rho_E=G$ via Zywina's algorithm \cite{ZywinaAlgorithm}, and represent the output as $G(N) \leq \GL_2(\Z/N\Z)$ where $N$ is the level. By Algorithm \ref{alg:reduce_level}, we may use $G(N)$ to compute the level $m_0$ of the $m$-adic Galois representation associated to $E$, where $m$ is the product of 2, 3, and all non-surjective primes. By Proposition \ref{Rmk:Level}, the level of any primitive point in $\mathcal{P}(E)$ will divide $m_0$, so applying Algorithm \ref{alg:compute_prim_degree} to $\im \rho_{E,m_0}$ results in the complete set of primitive degrees for $E$. In particular, for each closed point $x \in X_1(n)$ with $j(x)=j$ and $n \mid m_0$, we have $x$ mapping to the primitive point $x' \in X_1(a)$ of degree $d$. We record the entry $\langle n, ( a, d) \rangle$ in the multiset $D$. By Theorem \ref{Thm:PrimPts}, we have $\deg(x)=\deg(x') \cdot \deg(X_1(n) \rightarrow X_1(a))$, and $j$ is isolated (respectively, sporadic) if and only if there is an isolated (respectively, sporadic) point on $X_1(a_i)$ of degree $d_i$ for some $\langle n_i, ( a_i, d_i) \rangle$ in $D$.


Next, we will rule out pairs $(a,d)$ which cannot correspond to isolated points. If $d> \text{genus}(X_1(a))$, then the point is not $\mathbf{P}^1$-isolated since its associated Riemann--Roch space has dimension at least 2. Thus we need only consider the multiset $D' \subseteq D$ containing those elements $\langle n,( a, d) \rangle$ for which $d \leq \text{genus}(X_1(a))$. The integer $n$ is not relevant for our purposes, so we create a new multiset $M$ containing $(a,d)$ from $D'$. We record $(a,d)$ with multiplicity $\mu$ if and only if $X_1(a)$ has $\mu$ distinct closed points of degree $d$ which are associated to $E$.

Finally, by Theorem \ref{Genus0Prop}, we may remove from $M$ any pair $(a,d)$ where $\im \rho_{E,a}$ corresponds to a modular curve of genus 0. Since Algorithm \ref{alg:mainalgorithm} returns $M$, we are done.
\end{proof}

\begin{corollary}\label{cor:P1_isolated}
If Algorithm \ref{alg:mainalgorithm} outputs an empty set on some non-CM $j$-invariant $j \in \Q$, then $j$ is not the image of an  isolated point on $X_1(N)$ for any positive integer $N$.
\end{corollary}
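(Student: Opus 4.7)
The plan is to deduce this corollary directly from Theorem~\ref{thm:validity_main_algorithm} by contraposition, so there is essentially no new content to produce beyond unpacking the quantifier.

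First I would suppose, for contradiction, that $j$ is the image of some isolated point $x \in X_1(N)$ with $j(x) = j$, for some positive integer $N$. Applying Theorem~\ref{thm:validity_main_algorithm}, we obtain a pair $(a_i,d_i)$ in the output list such that $x$ maps under the natural projection $X_1(N) \to X_1(a_i)$ to an isolated point of degree $d_i$ on $X_1(a_i)$. In particular the output list must contain at least one element, contradicting the hypothesis that Algorithm~\ref{alg:mainalgorithm} returns the empty set on $j$.

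There is no real obstacle here; the content is entirely in Theorem~\ref{thm:validity_main_algorithm}, whose proof already combines the characterization of the primitive set $\mathcal{P}(E)$ from Theorem~\ref{Thm:PrimPts}, the bound on primitive levels from Proposition~\ref{Rmk:Level}, the Riemann--Roch obstruction $d > \mathrm{genus}(X_1(a))$, and the genus~$0$ filter Theorem~\ref{Genus0Prop}. The corollary simply records the ``void'' case of that theorem. I would therefore write the proof as a single sentence or two, noting that the statement is the contrapositive of the ``only if'' direction inside the isolated part of Theorem~\ref{thm:validity_main_algorithm} specialized to an empty output.
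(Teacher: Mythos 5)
Your proposal is correct and matches the paper's treatment: the paper gives no separate argument for Corollary~\ref{cor:P1_isolated}, treating it as the immediate empty-output case of Theorem~\ref{thm:validity_main_algorithm}, exactly as you do by contraposition. Nothing further is needed.
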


\begin{example}\label{Ex45}
We end this section with an extended example to illustrate the impact of each step in Algorithm \ref{alg:mainalgorithm}. Let $E=1225.b1$, a non-CM elliptic curve over $\Q$ with $j(E)=-162677523113838677$, and let $m$ denote the product of 2, 3, and all non-surjective primes. Here, one can check that $m=162$.
\begin{enumerate}
\item Zywina's algorithm shows the adelic image of $E$ is of level $N=5180$. Let $G(N) \coloneqq \im \rho_{E,N}$.
\item Algorithm \ref{alg:reduce_level} shows that the level of the $m$-adic Galois representation of $E$ is 148.
\item Algorithm \ref{alg:compute_prim_degree} shows that it suffices to consider points on $X_1(n)$ where $n \mid 37$. In particular, it is not necessary to consider points on $X_1(74)$ or $X_1(148)$ since any will map under the natural projection map to a modular curve of lower level. There is a single point of degree 1 on $X_1(1)$ and 4 points on $X_1(37)$ --- one of degree 18 and 3 of degree 222.
\item We compute that $\text{genus}(X_1(37))=40$ and  $\text{genus}(X_1(1))=0$. Thus no point of degree 222 on $X_1(37)$ can be isolated, and neither is the rational point on $X_1(1)$.
\item Since the modular curve associated to $\im \rho_{E,37}$ has genus 4, the algorithm returns $ \{( 37, 18)\}$.
\end{enumerate}
Since $X_1(37)$ has infinitely many degree 18 points by \cite{DerickxVanHoeij2014}, the point of degree 18 on $X_1(37)$ is not sporadic. It follows that $j(E)$ is not a sporadic $j$-invariant. In fact, the appendix shows that $j(E)$ is isolated; see Theorem \ref{DisolatedThm}.
\end{example} 

\section{Remaining Filters and Computational Results}
\label{sec:remainingfilters}

Suppose $E/\Q$ is non-CM elliptic curve such that one of the following holds:
\begin{itemize}
    \item$N_E \leq \numprint{500000} $,
    \item $N_E$ is only divisible by primes $p \leq 7$, or
    \item $N_E=p \leq \numprint{300000000}$ for some prime number $p$.
\end{itemize} Then running Algorithm \ref{alg:mainalgorithm} on $E$ results in the empty set, aside from the $j$-invariants listed in Table~\ref{table:imagegt0}; the output from elliptic curves in the Stein--Watkins database yields no additional $j$-invariants. The final step in the main algorithm filters out curves with mod $N$ genus $0$; we also list in the table the mod $N$ genus.  This genus is computed using code associated to the paper \cite{RSZB2022}. 
	\begin{table}[h]
	\begin{center}
		\begin{tabularx}{266pt}{l l c} \toprule
		$j$ & $( N, d )$ & genus mod $N$  \\\midrule
$-140625/8$ &  $\{ ( 21, 3 )^2 \} $ & $1$\\
 $-162677523113838677$& $\{ ( 37, 18 ) \}$ & $4$\\
 $-882216989/131072$ & $\{ ( 17, 4 )^2\}$ & $1$ \\
 $-9317$ & $\{ ( 37, 6 )^3 \} $ & $4$\\
 $16778985534208729/81000$ & $\{ ( 24, 4 )^2 \}$ & $1$\\
 $351/4$ & $\{ ( 28, 9 )^2 \}$ & $5$\\ \bottomrule
		\end{tabularx}
		\caption{Output of main algorithm}\label{table:imagegt0}
	\end{center}
\end{table}

In this section we describe additional computations that prove that the only $\PP^1$-isolated points on $X_1(N)$ for a fixed $N$ correspond to the four $j$-invariants $j=-140625/8,-9317,351/4,$ and $ -16267752311383867$, proving Theorem \ref{LMFDBoutputThm}. Since $j=-140625/8,-9317$, and $351/4$ are known to be isolated (see Section \ref{sec:intro}), it suffices to consider only the remaining 3 $j$-invariants.
\if{false}
\begin{proposition} \abbey{New plan is to use result about genus 0 mod $N$ images in place of this Proposition (instead of working to correct it).}
    Let $E/\Q$ be an elliptic curve and let $G_0$ be the image mod $N$ Galois representation attached to $E$, viewed as a subgroup of $\GL_2(\Z/N\Z)$ after some choice of basis for $E[N]$. Let $y=(E,P)$ be a degree $d$ point on $X_1(N)$. Suppose there is a conjugate $c(B_1(N))$ of $B_1(N)$ such that $B:=\langle c(B_1(N)), G_0 \rangle \leq \GL_2(\Z/N\Z)$ and $X_B$ is of genus 0. If $[\pm B: \pm c(B_1(N))]=d$, then $y$ is not $\PP^1$-isolated.
\end{proposition}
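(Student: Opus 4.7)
The plan is to exhibit $y$ as an element in the image of a non-constant morphism $\PP^1 \to X_1(N)^{(d)}$, which by Lemma \ref{lem:iso1} will imply $y$ is $\PP^1$-parametrized. The natural candidate arises via Lemma \ref{lem:map to Symd} from the degree-$d$ morphism of modular curves $f \colon X_{c(B_1(N))} \to X_B$ induced by the inclusion $c(B_1(N)) \leq B$.

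First I would reduce to the case $c(B_1(N)) = B_1(N)$ via a change of basis for $E[N]$, so that $X_{c(B_1(N))}$ is identified with $X_1(N)$. Since $G_0 = \im \rho_{E,N}$ has surjective determinant and $G_0 \leq B$, the group $B$ also has surjective determinant, so $X_B$ is a geometrically integral curve over $\Q$ and Proposition \ref{prop:mapdegree} guarantees that $f$ is a $\Q$-rational morphism of degree $[\pm B \colon \pm B_1(N)] = d$.

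Next I would show that $f(y) \in X_B(\Q)$. Under the moduli interpretation, $y$ corresponds to the pair $(E, P)$, and $f(y)$ corresponds to $E$ equipped with the $B$-orbit of any basis of $E[N]$ whose first vector is $P$. Because the Galois action on such bases factors through $G_0 \leq B$, this $B$-orbit is Galois-stable, so $f(y)$ is $\Q$-rational. Combined with the genus-zero hypothesis and the existence of a $\Q$-point, this forces $X_B \cong \PP^1_\Q$. The pullback divisor $f^{-1}(f(y))$ on $X_1(N)$ then has total degree $\deg(f) \cdot \deg(f(y)) = d$, and since it contains $y$ in its support while $y$ itself is a closed point of degree $d$, we conclude $f^{-1}(f(y)) = y$ as effective divisors. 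Consequently $y$ lies in the image of the non-constant morphism $f^\ast \colon X_B \to X_1(N)^{(d)}$, and Lemma \ref{lem:iso1} completes the proof.

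The main obstacle I anticipate is rigorously justifying the rationality $f(y) \in X_B(\Q)$. When $-I \notin G_0$, one must either pass to a quadratic twist of $E$ (in the spirit of the twist argument appearing in the proof of Proposition \ref{prop:compute_degrees}) or argue directly via a residue-field calculation on the level of double cosets $\langle G_0,-I\rangle \backslash \GL_2(\Z/N\Z) / \pm B$. The underlying moduli-theoretic principle — that $G_0 \leq B$ encodes Galois-equivariance of the induced $B$-level structure — is standard for modular curves whose defining group contains the adelic image, so I expect this step to go through, though care is required since $X_B$ may only be a coarse moduli space and the conjugation $c$ must be tracked throughout the identification.
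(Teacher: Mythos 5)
Your overall strategy (push $y$ into the genus-$0$ curve $X_B$ through the degree-$d$ map, pull back a rational point, and invoke Lemma~\ref{lem:iso1}) is the same route the authors originally tried for this proposition, but your key step --- the claim that $f(y)\in X_B(\Q)$ --- is precisely where that route breaks down, and it is why the proposition was abandoned in favor of Theorem~\ref{Genus0Prop} (the note attached to the statement says as much, and the authors' own attempted proof carries the remark that $f(y)$ need not be the rational point corresponding to $E$). The hypothesis gives $G_0\subseteq B$ with respect to one fixed basis of $E[N]$, whereas the point $y=(E,P)$ determines the $B$-orbit of bases whose first vector is $P$; Galois acts on those bases through a conjugate $g^{-1}G_0 g$, where $g$ is the change of basis from the fixed basis to one adapted to $P$, and nothing in the hypotheses forces $g^{-1}G_0 g\subseteq B$, because no compatibility between $P$ and the conjugate $c(B_1(N))$ is assumed. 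Concretely, if $G_0$ is the full Borel subgroup and $B$ is such that $X_B=X_0(N)$, then rationality of $f(y)$ would require $\langle P\rangle$ itself to be $\Q$-rational, while the hypotheses only guarantee that \emph{some} cyclic subgroup of order $N$ is rational. So your assertion that ``the Galois action on such bases factors through $G_0\leq B$'' is false in general, and the obstruction is not the presence or absence of $-I$: passing to a quadratic twist or computing with double cosets does not create the missing compatibility, so the anticipated ``obstacle'' is not a technicality but the actual gap. (Your step~1 also quietly conflates two different conjugations: normalizing $c(B_1(N))=B_1(N)$ moves $G_0$, but it does not align $B_1(N)$ with the stabilizer data attached to $P$.)

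The paper's actual fix, Theorem~\ref{Genus0Prop}, builds the compatibility in from the start: one conjugates $G(N)$ so that $P$ is the first basis vector, works with the \emph{intersection} $B=B_1(N)\cap G(N)$ rather than the generated group, accepts that $\det B$ may then fail to be surjective, defines the corresponding curve $X'_B$ over the fixed field $K$ of $\det B$ (as $\Gamma(B)\backslash\mathcal{H}^*$), checks the degree bookkeeping $\deg(x_0)=[G(N)_K:\pm B]$ there, and finally descends from $K$ to $\Q$ using the Descent Lemma. Any correct repair of the statement you were given must impose a relation between $P$, the chosen basis, and the conjugate $c(B_1(N))$ (e.g.\ that $P$ be the first basis vector), and must then handle the possibly non-surjective determinant by a descent argument; with the hypotheses as stated, the conclusion cannot be reached along the lines you propose.
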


\abbey{We need some sort of compatibility criterion here. For example, suppose $G_0$ is the full Borel subgroup, so $X_B=X_0(N)$. Then for $y$ on $X_1(N)$ of degree $d$, it does not necessarily follow that $f(y)$ is in $X_0(N)(\Q).$ We would need to know that $\langle P \rangle$ is actually $\Q$-rational. I think we need to choose a conjugate of $G_0$ so that $P$ is the first basis element, and then work with $B_1(N)$ and not $c(B_1(N))$.}

\begin{proof}
    By \Cref{prop:mapdegree} it follows that there exists a degree $d$ map $f\colon X_1(N)\rightarrow X_B\simeq \PP^1$ of degree $d$. Since $G_0$ is contained in $B$, it follows that $E$ corresponds to  $x\in X_B(\Q)$ such that $f(y)=x$ \abbey{I don't think we necessarily have $f(y)=x$; see comment directly above proof.}. Hence $y$ is not $\PP^1$-isolated.
\end{proof}

\fi

\subsubsection{Degree $18$ point on $X_1(37)$ corresponding to $j=-162677523113838677$}  \label{18_37}

We show that the point $x\in X_1(37)(K)$, where $K$ is a degree 18 number field, and $j(x)=-162677523113838677$ is $\PP^1$-isolated. We explicitly compute the coordinates of $x$ on a model of $X_1(37)$ and define  $\sigma_i$, for $i=1,\ldots, 18$ to be the automorphisms of $K$ and $D=\sum_{i=1}^{18}\sigma_i(x)$. Reducing everything modulo $3$ and denoting the reduction of $D$ modulo $3$ by $\overline{D}$, we obtain $\ell(\overline{D})=1$, which shows that the reduction
$\overline{x}$ of $x$ modulo 3 is $\PP^1$-isolated. Hence it follows that $x$ is $\PP^1$-isolated.  

\subsubsection{Degree $4$ point on $X_1(17)$ corresponding to $j=-882216989/131072$ and degree $4$ point on $X_1(24)$ corresponding to $j=16778985534208729/81000$ }
To show that these points are not isolated we compute the coordinates of a degree $4$ point $x$ corresponding to our curve on a model of $X_1(17)$ and $X_1(24)$. 
Let $\sigma_i$, for $i=1,\ldots, 4$ be the automorphisms of $K$ and $D=\sum_{i=1}^4\sigma_i(x)$. In each case, we compute $\ell(D)=2$, which implies that $x$ is not $\PP^1$-isolated.

The fact that the degree $4$ point on $X_1(17)$ is not $\PP^1$-isolated also follows from the results of \cite[Proposition 6.7]{DerickxMazurKamienny}, where it is shown that there are no $\PP^1$-isolated quartic points on $X_1(17)$.

\subsection{Computations}
We give some details on the implementation and runtime of the algorithm in this section.
We ran Algorithm \ref{alg:mainalgorithm}
on two databases of $j$-invariants of elliptic curves over $\Q$:
\begin{itemize}
\item The LMFDB \cite{LMFDB} contains all elliptic curves over $\Q$ of conductor up to \numprint{500000} and includes roughly $2$~million $j$-invariants.
\item The Stein--Watkins database \cite{SteinWatkins} contains roughly $36$~million unique $j$-invariants of elliptic curves over $\Q$ of absolute discriminant at most $10^{12}$ that have conductor at most $10^8$ or prime conductor at most $10^{10}$. However, it has been filtered to include just one representative from each isogeny class and each class of quadratic twists.
\end{itemize}
All computations were run on a server with an AMD EPYC 7713 2GHz CPU and Magma V2.28-3 \cite{Magma}.
For every elliptic curve $E/\Q$ in the LMFDB, the database contains the genus of $X_G$ where $G$ is the image of the adelic Galois represenation of $E$. Applying Corollary \ref{cor:adelicgaloisgenus0}, we filtered the roughly $2$~million $j$-invariants of non-CM elliptic curves $E/\Q$ in the LMFDB to a set of \numprint{30141}, such that the image of the adelic Galois representation of $E$ is greater than $0$. On this set, the computation took $2$~CPU hours and $1714$ MB of memory. The Stein--Watkins database contains \numprint{35788699} unique $j$-invariants of non-CM elliptic curves over $\Q$, with no information about the image of the adelic Galois representation. Running Algorithm \ref{alg:mainalgorithm} on this database took $442$ CPU hours.

\section{\texorpdfstring{Appendix: {The $j=-162677523113838677$ example is AV-isolated}\\ by Maarten Derickx and Mark van Hoeij}{Appendix: {The j=-162677523113838677 example is AV-isolated} by Maarten Derickx and Mark van Hoeij}}


Let $X = X_1(37)$ and $D$ be the divisor for the degree 18 closed point in Section~\ref{18_37}, which also appeared in \cite[Remark 1.3]{Ejder}.
Through the maps $X \rightarrow X_0(37) \rightarrow X_0^+(37)$, we can view the Jacobian of the latter, $A \colonequals J_0(37)^+$,
as an abelian subvariety of $J_1(37)$.
In the isogeny decomposition of $J_1(37)$ there is exactly one abelian variety of positive rank, namely $A$. This means that the question of whether $D$ is 
AV isolated is equivalent to $\Phi_d(D) + A$ not being contained in $W_{18}(X)$,
where $W_d(X) \colonequals \Phi_d(X^{(d)})$ with $\Phi_d$ as in Section~\ref{sec:2.1}.

\begin{lemma}
Let $A$ be an abelian subvariety of ${\rm Jac}(X)$ and let
$\pi\colon  X^{(d)} \rightarrow J(X)/A$ be the composition of $\Phi_d$ with the quotient map.
Let $D \in  X^{(d)}$ and suppose that $\Phi_d(D) + A \subseteq W_d(X)$.
Then $\pi$, restricted to the tangent space at $D$, is not injective.
\end{lemma}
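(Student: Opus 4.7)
The plan is to compute $\ker d\pi_D$ by the chain rule and then rule out the possibility that it is zero by a case split. Writing $q\colon J(X)\to J(X)/A$ for the quotient and identifying $T_{\Phi_d(D)}J(X)$ with $T_0 J(X)$ via translation by $-\Phi_d(D)$, the differential of $q$ at $\Phi_d(D)$ has kernel $T_0 A$, and the chain rule yields
\[
\ker d\pi_D = (d\Phi_d|_D)^{-1}(T_0 A) \subseteq T_D X^{(d)}.
\]
So it suffices to produce a nonzero vector $v \in T_D X^{(d)}$ with $d\Phi_d|_D(v) \in T_0 A$. I will assume $\dim A > 0$ throughout, which is the case of interest.

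The easy case is when $d\Phi_d|_D$ fails to be injective, equivalently when $\ell(D) \geq 2$ and $D$ is $\mathbf P^1$-parametrized. Any nonzero element of $\ker d\Phi_d|_D$ then lies in $\ker d\pi_D$, and we are done. The substantive case is when $d\Phi_d|_D$ is injective. In that case the scheme-theoretic fiber $\Phi_d^{-1}(\Phi_d(D))$ has zero tangent space at $D$, and since $\Phi_d$ is proper (as $X^{(d)}$ is proper) this fiber is the single reduced point $\{D\}$. I would then invoke Zariski's main theorem to produce a Zariski open neighborhood $U$ of $D$ such that $\Phi_d|_U$ is an isomorphism onto a locally closed smooth subvariety of $J(X)$ of dimension $d$. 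A further properness argument---any limit of preimages of points close to $\Phi_d(D)$ must lie in $\Phi_d^{-1}(\Phi_d(D)) = \{D\}$---shows that, after shrinking, $W_d \cap V = \Phi_d(U)$ for some open $V \ni \Phi_d(D)$; in particular $W_d$ is smooth at $\Phi_d(D)$ of dimension $d$, with tangent space equal to $\im(d\Phi_d|_D)$.

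With this local picture in hand, the hypothesis $\Phi_d(D) + A \subseteq W_d$ exhibits the smooth subvariety $\Phi_d(D) + A$ as a subvariety of $W_d$ through $\Phi_d(D)$, whence
\[
T_0 A = T_{\Phi_d(D)}(\Phi_d(D) + A) \subseteq T_{\Phi_d(D)} W_d = \im(d\Phi_d|_D),
\]
and injectivity of $d\Phi_d|_D$ then forces $(d\Phi_d|_D)^{-1}(T_0 A)$ to have dimension $\dim A > 0$. The main obstacle is the substantive case: using properness and Zariski's main theorem to reduce the problem, near $\Phi_d(D)$, from the possibly-singular variety $W_d$ to a smooth subvariety of $J(X)$ into which $\Phi_d(D) + A$ embeds. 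Everything else is tangent-space bookkeeping.
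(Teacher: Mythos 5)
Your proof is correct and follows essentially the same route as the paper's: split on whether $d\Phi_d|_D$ is injective, and in the injective case identify $T_{\Phi_d(D)}W_d$ with $\im(d\Phi_d|_D)$ so that the tangent space of the translate $\Phi_d(D)+A$, which is killed by the quotient $J(X)\to J(X)/A$, pulls back to a nonzero kernel vector of $d\pi_D$; the paper simply asserts this tangent-space identification, while you justify it via the local-isomorphism argument. One small repair: properness alone does not force the fiber $\Phi_d^{-1}(\Phi_d(D))$ to be the single point $\{D\}$ (an isolated reduced point could a priori coexist with other fiber components); the correct reason is that this fiber is the complete linear system $|D|\cong \PP^{\ell(D)-1}$, and injectivity of $d\Phi_d|_D$ forces $\ell(D)=1$ --- a fact already implicit in your easy-case remark --- after which your properness/Zariski's-main-theorem step goes through as written.
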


\begin{proof} Assume that $\Phi_d$ is injective on the tangent space at $D$,
otherwise there is nothing to prove as $\pi$ factors through $\Phi_d$.
Then $\Phi_d$ induces an isomorphism between the tangent spaces of $X^{(d)}$ at $D$, and $W_d$ at $\Phi_d(D)$.
The positive dimensional variety $\Phi_d(D) + A$ is contracted to a point under the quotient map
$J(X) \rightarrow J(X)/A$, in particular the entire tangent space of  $\Phi_d(D) + A$ at $\Phi_d(D)$
is sent to zero under the quotient map, and the result follows.
\end{proof}

\begin{corollary} If the corresponding map $\pi^* \colon {\rm Cot}_0 \,J(X)/A \rightarrow {\rm Cot}_D \,X^{(d)}$
on cotangent spaces is surjective, then there is no translate of $A$ contained in $W_d(X)$ passing through $\Phi_d(D)$.
\end{corollary}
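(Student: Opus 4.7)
My plan is to deduce the Corollary directly from the preceding Lemma by a short duality argument. The map $\pi^{*} \colon {\rm Cot}_{0}\, J(X)/A \to {\rm Cot}_{D}\, X^{(d)}$ in the statement is, by construction, the linear dual of the differential
\[
d\pi_{D} \colon T_{D}\,X^{(d)} \longrightarrow T_{\pi(D)}\,J(X)/A,
\]
where the target is canonically identified with $T_{0}\,J(X)/A$ via translation on the abelian variety $J(X)/A$. Since both spaces are finite-dimensional, surjectivity of $\pi^{*}$ is equivalent to injectivity of $d\pi_{D}$. Thus the first step is just to make this translation explicit: the hypothesis of the Corollary says precisely that $\pi$ restricts to an injective map on the tangent space at $D$.

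With the hypothesis rephrased this way, the contrapositive of the Lemma immediately gives $\Phi_{d}(D) + A \not\subseteq W_{d}(X)$. To bridge the remaining gap to the stated conclusion, I would point out that any translate of $A$ inside $J(X)$ is a coset of $A$ as a subgroup, and the unique such coset passing through $\Phi_{d}(D)$ is $\Phi_{d}(D) + A$. Hence the non-containment above already rules out \emph{every} translate of $A$ through $\Phi_{d}(D)$, which is exactly the conclusion of the Corollary.

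I do not expect any genuine obstacle; the whole argument is essentially a one-line deduction from the Lemma once the duality and the coset remark are spelled out. The only point requiring any care is making sure that the cotangent-space map in the statement is literally the transpose of $d\pi_{D}$ (rather than some derived or sheaf-theoretic variant), but this is built into the standard differential-geometric definition of $\pi^{*}$ on the cotangent spaces of the smooth scheme $X^{(d)}$ and the abelian variety $J(X)/A$.
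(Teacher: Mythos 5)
Your proposal is correct and follows the same route as the paper: the paper's proof is exactly the observation that a map on tangent spaces is injective if and only if the dual map on cotangent spaces is surjective, combined with the contrapositive of the Lemma. Your additional remark that the unique translate of $A$ through $\Phi_d(D)$ is the coset $\Phi_d(D)+A$ merely spells out what the paper leaves implicit.
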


\begin{proof} This is because a map on tangent spaces is injective if and only if the corresponding map
on cotangent spaces is surjective.
\end{proof}
\begin{theorem} \label{DisolatedThm}
    $D$ is AV-isolated.
\end{theorem}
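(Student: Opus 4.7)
The plan is to apply the Corollary above: it suffices to show that the cotangent pullback
\[
\pi^* \colon \mathrm{Cot}_0\, J(X)/A \longrightarrow \mathrm{Cot}_D\, X^{(18)}
\]
is surjective. Since $A = J_0(37)^+$ is an elliptic curve and $X = X_1(37)$ has genus $40$, the source has dimension $39$ and the target has dimension $18$. The first step is to make the two sides concrete. Identify $H^0(X,\Omega^1) \cong S_2(\Gamma_1(37))$, let $f_A \in S_2(\Gamma_0(37))^+$ be the weight-$2$ newform attached to $A$, and let $V_B \subset H^0(X,\Omega^1)$ be the kernel of the natural restriction $H^0(J(X),\Omega^1) \to H^0(A,\Omega^1) = \langle f_A\rangle$ coming from the inclusion $A \hookrightarrow J(X)$. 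This $39$-dimensional hyperplane is the concrete incarnation of $\mathrm{Cot}_0\, J(X)/A$.

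Because $D$ is $\PP^1$-isolated by Section~\ref{18_37}, i.e.\ $\ell(D) = 1$, the Abel--Jacobi map $\Phi_{18}$ is an immersion at $D$; equivalently, the evaluation map $\mathrm{ev}_D \colon H^0(X,\Omega^1) \twoheadrightarrow \mathrm{Cot}_D\, X^{(18)}$ is surjective, with kernel $H^0(X,\Omega^1(-D))$ of dimension $22$ by Riemann--Roch. Under the above identifications, $\pi^*$ is simply the restriction of $\mathrm{ev}_D$ to the hyperplane $V_B$. A straightforward dimension count then shows that $\pi^*$ is surjective if and only if $H^0(X,\Omega^1(-D)) \not\subseteq V_B$, equivalently, if and only if some cusp form vanishing on $D$ has a nonzero $f_A$-component.

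This reduces the theorem to an explicit linear-algebra check. I would take the model of $X_1(37)$ used in Section~\ref{18_37}, locate the $18$ geometric points comprising $D$ from the equations together with the data $j(x) = -162677523113838677$, fix a basis of $S_2(\Gamma_1(37))$ containing $f_A$ (for example via $q$-expansions at a cusp), compute the $22$-dimensional space $H^0(X,\Omega^1(-D))$ as the kernel of the evaluation matrix at the $18$ points, and verify that at least one basis vector of this kernel has a nonzero $f_A$-coordinate. The principal obstacle is the scale of the data — $X_1(37)$ has genus $40$ and $D$ lives over a degree-$18$ number field, so the evaluation matrix is large and must be handled with care — but the computation is well within reach of existing implementations for $X_1(N)$.
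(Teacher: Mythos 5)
Your reduction is set up correctly and runs parallel to the paper's framework: you invoke the same Corollary, identify $\mathrm{Cot}_0\,J(X)/A$ with the $39$-dimensional kernel of the restriction $H^0(X,\Omega^1)\to \mathrm{Cot}_0\,A$, use $\ell(D)=1$ (from Section~\ref{18_37}) to see that evaluation $H^0(X,\Omega^1)\to \mathrm{Cot}_D\,X^{(18)}$ is surjective with kernel $H^0(X,\Omega^1(-D))$ of dimension $22$, and your dimension count correctly shows that surjectivity of $\pi^*$ is equivalent to $H^0(X,\Omega^1(-D))\not\subseteq V_B$, i.e.\ to the existence of a regular differential vanishing along $D$ with nonzero $f_A$-component. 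The genuine gap is that this is exactly where the proof has to start, not end: you neither prove that such a form exists nor carry out the verification you describe. ``Compute the $22$-dimensional kernel and check that some vector has nonzero $f_A$-coordinate'' is a plan for a computation, and not a routine one --- it requires exact linear algebra over a degree-$18$ number field on a genus-$40$ curve, plus a dictionary between differentials on the chosen model of $X_1(37)$ and the eigenform decomposition of $S_2(\Gamma_1(37))$ so that ``$f_A$-component'' is meaningful on the model. As written, the crux of the theorem is asserted to be checkable rather than checked.

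The paper supplies precisely this missing ingredient by a structural argument that sidesteps the large computation. Since $A\subseteq J_0(37)$, the operator $\langle 2\rangle-1$ annihilates $A$, so all forms $\langle 2^i\rangle(\langle 2\rangle-1)w$ lie in $\mathrm{Cot}_0\,J(X)/A$; the equality $\ell(D)=1$ together with Riemann--Roch produces a $w$ vanishing at $y_2,\dots,y_{18}$ but not at $y_1$, giving a rank-$17$ evaluation matrix, and the rank is raised to $18$ by adjoining the pullback $w'$ of the nowhere-vanishing, diamond-invariant differential on the elliptic curve $J_0(37)^-$ (a quotient of $J(X)/A$), whose nonvanishing at $y_1$ reduces to a small check that the relevant ramification points have $j$-invariants $0$, $1728$, or $287496$, none equal to $-162677523113838677$. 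To complete your route you would have to either actually perform and report the linear-algebra verification over the degree-$18$ field, or replace it with an a priori argument of this kind; without one of these, the theorem is not established.
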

\begin{proof}
The corollary says that if $\pi$ is a {\em formal immersion} \cite{Version1}
        %
        %
        %
at $D$ then $D$ is AV-isolated.
To prove that $\pi$ is a formal immersion at $D$, we can use~\cite[Proposition~3.7]{Version1}
if we have a basis of the cotangent space of $A$ viewed as a subspace of $J(X)$.

Write $D = y_1 + \cdots + y_{18}$ where the $y_i$ form the Galois orbit corresponding
to the degree 18 closed point.
These $y_i$ also form one orbit under the diamond operators (they map to the same point in $X_0(37)$).
We order them in such a way that $y_i = \langle 2^i\rangle y_1$.
Let $q_1$ be a uniformizer at $y_1$, and let $q_i \colonequals q_1 \circ \langle 2^{-i} \rangle$
be the corresponding uniformizer at $y_i$. Note that since $A$ is inside $J_0(37)$
we have $(\langle 2\rangle - 1)(A) = 0$, so $\langle2\rangle  - 1$ factors via $J(X)/A$.
In particular all one-forms of the form $\langle 2^i\rangle (\langle 2\rangle  - 1) w$ inside ${\rm Cot}_0 \,J(X)$ come from ${\rm Cot}_0 \,J(X)/A$.

Next we show that there is a one-form $w$ such that $a(w,q_1,1) = 1$ and $a(w,q_i,1)=0$ for all $i > 1$, with $a(\ldots)$ as
in~\cite[Proposition~3.7]{Version1}.
By Riemann--Roch, \[{\rm dim\,} H^0(O_X(y_1+ \ldots +y_{18})) - {\rm dim\,} H^0(\Omega^1(-y_1-\ldots-y_{18})) = 18 + 1 - 40\]
and
\[{\rm dim\,} H^0(O_X(y_2+ \ldots +y_{18})) - {\rm dim\,} H^0(\Omega^1(-y_2-\ldots-y_{18})) = 17 + 1 - 40.\] 
Now ${\rm dim\,} H^0(O_X(y_1 + \ldots +y_{18})) = 1$ (equivalent to the degree 18 point being $\PP^1$-isolated)
and thus is equal to ${\rm dim\,} H^0(O_X(y_2+\ldots+y_{18}))$. 
Then by Riemann--Roch, \[
{\rm dim\,} H^0(\Omega^1(-y_2-\ldots-y_{18})) > {\rm dim\,} H^0(\Omega^1(-y_1-\ldots-y_{18})).\]
So the wanted one-form $w$ can be found by picking an element of $H^0(\Omega^1(-y_2-\ldots-y_{18}))$
that is not in $H^0(\Omega^1(-y_1-\ldots-y_{18}))$ and scaling it to make  $a(w,q_1,1) = 1$. 

Now let $w_i =  \langle 2^i\rangle(\langle2\rangle - 1)w$. These one-forms all come from ${\rm Cot}_0 \,J(X)/A$ as mentioned before.
The matrix in~\cite[Proposition~3.7]{Version1} now has the following form:

\[
\begin{pmatrix}
1& -1& 0& 0& \ldots& 0 \\
0& 1& -1& 0& \ldots& 0 \\
0& 0& 1& -1& \ldots & 0 \\
\vdots&  \vdots  & \vdots   &   \ddots &  \ddots & \vdots\\
0& 0& 0& \ldots & 1 & -1
\end{pmatrix}
\]

This matrix has rank 17, but for 
\cite[Proposition~3.7]{Version1}
we need rank 18.
An additional one-form $w'$ with $a(w',q_1,1) = \ldots = a(w',q_{18},1) = 1$ would increase
the rank to 18.
We can take $w'$ to be the pullback of a one-form on $J_0(37)^-$ to $J(X)$. Indeed, $J_0(37)^-$ is a quotient of
$J_0(37)/A$ and thus of $J(X)/A$. Such a $w'$ is invariant under the diamond operators so $a(w',q_1,1) = \ldots = a(w',q_{18},1)$
automatically holds. It remains to show that it does not vanish at $y_1$.  
Since $J_0(37)^-$ is an elliptic curve, $w'$ does not vanish anywhere,
and in particular not at the image of $y_1$ in $J_0(37)^-$.
The only possibility for $w'$ to vanish at $y_1$ is then for $X_1(37) \rightarrow J_0(37)^-$
to be ramified at $y_1$, however, this is not the case since the ramification points correspond to elliptic curves whose $j$-invariant is not equal to $-162677523113838677$.
Indeed, $X_1(37) \rightarrow X_0(37)$ only ramifies at $j=0$ and $j=1728$, and a Magma computation shows that both ramification points of $X_0(37) \rightarrow J_0(37)^-$ have $j$-invariant $287496$.
\end{proof}

\newcommand{\etalchar}[1]{$^{#1}$}

\end{document}